\newtheorem{step}{Step}
\newcommand{\newreptheorem}[2]{\newtheorem*{rep@#1}{\rep@title}\newenvironment{rep#1}[1]{\def\rep@title{#2 \ref*{##1}}\begin{rep@#1}}{\end{rep@#1}}}
\DeclareMathOperator{\Spec}{Spec}
\DeclareMathOperator{\Proj}{Proj}
\newcommand{\bQ}{\mathbb{Q}}
\newcommand{\MO}{\mathcal{O}}
\newcommand{\red}{\textup{red}}
\newcommand{\coeff}{\textup{coeff}}
\newcommand{\WDiv}{\textup{WDiv}}
\newcommand{\Pic}{\textup{Pic}}
\newcommand{\NS}{\textup{N$^1$}}
\newcommand{\Cl}{\textup{Cl}}
\newcommand{\SB}{\mathbf{SB}}
\newcommand{\Bf}[1]{\mathbf{#1}}
\newcommand{\Ht}[1]{H^{i}_{t}}
\newcommand{\ox}[1][X]{\mathcal{O}_{#1}}
\newcommand{\stacks}[1]{\cite[\href{https://stacks.math.columbia.edu/tag/#1}{Tag #1}]{stacks-project}}
\newtheorem{cas}{Case}
\title[Semiampleness for klt Calabi--Yau surfaces]
{Semiampleness for Calabi--Yau surfaces in positive and mixed characteristic} 
\author{Fabio Bernasconi and Liam Stigant} 
\subjclass[2020]{14E30, 14G17, 14J20, 14J27.}
\keywords{minimal model program, Calabi--Yau surfaces, abundance conjecture, positive characteristic.}
\address{\'Ecole Polytechnique F\'ed\'erale de Lausanne, Chair of Algebraic Geometry
	(B\^atiment MA), Station 8, CH-1015 Lausanne} 
\email{fabio.bernasconi@epfl.ch}
\address{Department of Mathematics, Imperial College London, 180 Queen's Gate, 
	London SW7 2AZ, UK} 
\email{l.stigant18@imperial.ac.uk}
\begin{document}
	
	\begin{abstract}
		In this note we prove the semiampleness conjecture for klt Calabi--Yau surface pairs over an excellent base ring.
		As applications we deduce  that generalised abundance and Serrano's conjecture hold for surfaces. Finally, we study the semiampleness conjecture for Calabi--Yau threefolds over a mixed characteristic DVR.
	\end{abstract}
	\maketitle
	
	\tableofcontents
	
	\section{Introduction}
	
	The abundance conjecture predicts that the canonical divisor $K_X$ of a minimal model $X$ with Kawamata log terminal (in short, klt) or log canonical (lc) singularities is semiample and it one of the most important conjectures of the Minimal Model Program (MMP). 
    Abundance is known to hold for surfaces over fields of arbitrary characteristic \cite{tanaka2020abundance, Pos21} and threefolds in characteristic 0 by Kawamata and Miyaoka (see \cite{FA} for references), but in higher dimensions even the effectivity of a multiple of $K_X$ (the so-called non-vanishing conjecture) is still an open problem.
	For threefolds in positive and mixed characteristic, various special cases have been proven in \cite{DW19, Zha19, Zha20, XZ19,Wit21, BBS} but the general conjecture is still unanswered.
	
	A generalised form of abundance is expected to hold for $K$-trivial varieties and their log generalisations (see \emph{e.g.} \cite[Conjecture 51]{Kol15}).
	
	\begin{conjecture}[Semiampleness conjecture on klt Calabi--Yau pairs]\label{c-num-semiample}
		Let $(X, \Delta)$ be a projective klt pair of dimension $n$ over a field $k$ such that $K_X+\Delta \equiv 0$.
		Let $L$ be a nef $\mathbb{Q}$-Cartier $\mathbb{Q}$-divisor on $X$.
		Then $L$ is num-semiample, i.e. there exists a $\bQ$-Cartier  $\bQ$-divisor $L'$ on $X$ such that $L \equiv L'$ and $L'$ is semiample.
	\end{conjecture}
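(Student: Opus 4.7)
I would proceed by cases on the numerical dimension $\nu(L) \in \{0, 1, 2\}$ of the nef divisor $L$, specialising to $n = 2$ as in the main surface case this paper addresses. For $\nu(L) = 0$ one has $L \equiv 0$, so $L' = 0$ suffices. For $\nu(L) = 2$, the divisor $L$ is big and nef, and the base-point-free theorem for klt surfaces over excellent bases (via work of Tanaka, Keel, and Witaszek) produces a semiample $L'$ numerically equivalent to $L$.

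The substantive case is $\nu(L) = 1$, where $L^2 = 0$ and $L \not\equiv 0$. The Calabi--Yau hypothesis gives $L \cdot K_X = -L \cdot \Delta \leq 0$, and surface Riemann--Roch (applied on a suitable resolution to handle the klt singularities) yields
\[
\chi(X, mL) \;=\; \chi(X, \mathcal{O}_X) + \tfrac{m}{2}\, L \cdot \Delta.
\]
When $L \cdot \Delta > 0$ this grows linearly in $m$, and Serre duality together with the observation that any effective $E \equiv \omega_X - mL$ would satisfy $L \cdot E = -L \cdot \Delta < 0$ (contradicting nefness of $L$) forces $h^2(X, mL) = 0$ for $m \gg 0$. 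Hence $h^0(X, mL) \geq cm$ eventually, so $L$ is numerically equivalent to an effective $\mathbb{Q}$-divisor $D$, and the Zariski decomposition $D = P + N$ delivers a nef positive part $P$ with $P^2 = 0$ and $P \cdot N = 0$. The degenerate subcase $L \cdot \Delta = 0$ is handled similarly, since $\Delta$ is then forced into $L^{\perp}$ and one reduces, after adjusting the boundary, to the strictly $K$-trivial setting.

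To conclude I would show that $P$ defines a fibration $f \colon X \to C$ onto a curve with $L \equiv f^{\ast}M$ for an ample $\mathbb{Q}$-divisor $M$ on $C$. The natural route is to pass to a minimal resolution (or an index-one cover, when available), reducing to a smooth projective surface $\widetilde{X}$ with $K_{\widetilde{X}}$ numerically trivial modulo an effective exceptional divisor, and then to invoke the classification of smooth Calabi--Yau surfaces in arbitrary characteristic (K3, Enriques, abelian, or bielliptic type) where nef classes of square zero are known to be semiample. Descent of semiampleness from $\widetilde{X}$ to $X$ is then handled by a perturbation and base-point-free argument on the exceptional components, possibly combined with Keel's semiampleness criterion for nef divisors that are EWM (endowed with a map).

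The principal obstacle is the unavailability of Kawamata--Viehweg vanishing in positive and mixed characteristic, which rules out a direct base-point-free argument for the positive part $P$. I expect the most delicate step to be the descent from $\widetilde{X}$ back to the original klt pair, particularly in small characteristic, where the index-one cover can be inseparable and the classical structure theory of Calabi--Yau surfaces is less rigid; this is likely where the paper's main technical contribution lies.
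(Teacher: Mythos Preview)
Your broad architecture matches the paper's: split on $\nu(L)$, dispose of $\nu=0,2$ trivially/by base-point-free, and for $\nu=1$ use Riemann--Roch on the minimal resolution to get $h^0(mL)>0$ when $L\cdot\Delta>0$. But several steps after that are either vacuous or circular.

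First, the Zariski decomposition is doing nothing: since nefness is numerical and $L$ is nef, any effective $D\equiv L$ is already nef, so $N=0$ and $P=D$. What you actually need once $L\equiv E\geq 0$ is much simpler than your descent plan: the paper observes that $(X,\Delta+\varepsilon E)$ is klt with $K_X+\Delta+\varepsilon E\equiv \varepsilon E$ nef, and then \emph{abundance for klt surfaces} (Tanaka) makes $E$ semiample outright. Second, your appeal to ``nef classes of square zero are known to be semiample'' on K3/Enriques/abelian/bielliptic surfaces is precisely the question. Nothing in the classification hands you this; the paper's actual content here is the implication \emph{strictly nef $\Rightarrow$ ample} on a smooth $K$-trivial surface, proved case by case from Bombieri--Mumford (Riemann--Roch when $\chi>0$; an \'etale abelian cover for hyperelliptic; an explicit finite cover by $E\times\mathbb{P}^1$ for quasi-hyperelliptic). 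Once strictly nef is excluded, a curve $C$ with $L\cdot C=0$ exists and one contracts or uses $L\equiv tC$. Third, passing to the minimal resolution does not land you on a $K$-trivial surface unless $X$ already has canonical singularities; when it does not, the paper shows $X_{\overline{k}}$ is rational, whence $\chi\geq 1$ and Riemann--Roch gives effectivity directly.

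Finally, the imperfect-field case is not a peripheral worry but a genuine missing idea in your outline. The paper handles it via Tanaka's base-change formula: if $X$ is not geometrically normal, the normalised base change $Y\to X_{\overline{k}}$ satisfies $K_Y+E\sim f^*K_X$ with $E>0$, and then $f^*L\cdot E>0$ forces $h^0(m f^*L)\to\infty$ by Riemann--Roch, giving ampleness. Your index-one-cover idea does not substitute for this.
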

	
	If $k$ is a field of characteristic $0$, a thorough discussion of this conjecture and its connections to the MMP can be found in the work of Lazi\'c and Peternell \cite{LP20, LP20II}. Under the same hypothesis on $k$, \autoref{c-num-semiample} has been proven for projective surfaces \cite[Theorem 8.2]{LP20}, certain classes of Calabi--Yau threefolds \cite{LOP, LS20} and hyperk\"{a}hler fourfolds \cite{DHMV22}. Moreover, some partial results for compact K\"{a}hler surfaces are obtained in \cite{FT18}.
	
	The aim of this note is to confirm the conjecture for projective surfaces over arbitrary, possibly imperfect, fields of positive characteristic.
	
	\begin{theorem}[see \autoref{t-num-semiampleness}]\label{t1-semi-ampl-CY}
	The semiampleness conjecture holds for klt Calabi--Yau surface pairs over a field $k$ of characteristic $p>0$.
	\end{theorem}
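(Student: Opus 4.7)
My approach is a case split on the numerical dimension $\nu(L)\in\{0,1,2\}$. For $\nu(L)=0$ we have $L\equiv 0$ and $L'=0$ suffices. For $\nu(L)=2$, $L$ is big and nef; since $K_X+\Delta+L\equiv L$ is big and nef and $(X,\Delta)$ is klt, the base-point-free theorem for klt surfaces in positive characteristic (Tanaka and others) produces a semiample representative of $L$ on $X$. The substantive case is $\nu(L)=1$.

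\textbf{Reduction and main-case analysis.} In the case $\nu(L)=1$, I first take a log resolution $\pi\colon Y\to X$: any semiample $M'$ on $Y$ numerically equivalent to $M:=\pi^*L$ must satisfy $M'\cdot E=0$ for every $\pi$-exceptional curve $E$, so the morphism defined by a multiple of $M'$ contracts the exceptional locus and factors through $\pi$ by rigidity, and the descended semiample class on $X$ is num-equivalent to $L$ (using injectivity of $\pi^*$ on numerical classes). Blowing down further $(-1)$-curves, I may then assume $X$ is a smooth Calabi--Yau minimal model of Kodaira dimension zero, i.e.\ abelian, K3, Enriques, or (quasi-)bielliptic in the positive-characteristic Enriques--Kodaira classification. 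On each such type I would analyse nef isotropic classes: for K3, Riemann--Roch (using $L^2=K_X\cdot L=0$ and $\chi(\mathcal{O}_X)=2$) yields $h^0(mL)\geq 2$ for some $m$, and the isotropy $L^2=0$ forces the moving part of $|mL|$ to be composed with an elliptic or quasi-elliptic pencil; for abelian surfaces every nef class is num-semiample by the standard theory of line bundles on abelian varieties; for Enriques surfaces, the classification of isotropic nef classes identifies $L$ numerically with a (half-)fibre of the (quasi-)elliptic fibration; for (quasi-)bielliptic surfaces the given fibration structure directly yields the conclusion. In each case the resulting fibration has fibre class numerically a positive multiple of $L$, which is semiample.

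\textbf{Descent and main obstacle.} For imperfect $k$ one passes to $\bar k$, obtains the result there, and descends via Galois descent, which preserves num-equivalence of $\bQ$-Cartier divisors and semiampleness alike. The main obstacle I anticipate lies in the $\nu(L)=1$ case in characteristics $2$ and $3$, where quasi-elliptic pencils replace elliptic ones and the Picard lattices of supersingular K3 or non-classical Enriques surfaces behave pathologically, so that Riemann--Roch does not by itself produce a genuine pencil of the desired shape. It is precisely such phenomena that motivate the \emph{num}-semiampleness formulation: numerical and $\bQ$-linear equivalence may differ on Calabi--Yau surfaces in positive characteristic, and one can only hope to represent $L$ by a semiample divisor up to numerical equivalence.
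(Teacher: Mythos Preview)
Your case split on $\nu(L)$ and the treatment of $\nu(L)\in\{0,2\}$ are fine and match the paper. The substantive problem is your reduction in the $\nu(L)=1$ case.

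\textbf{The reduction to a smooth minimal Calabi--Yau surface fails.} If $(X,\Delta)$ is a klt CY \emph{pair}, then on a log resolution $\pi\colon Y\to X$ one has $K_Y+\Delta_Y=\pi^*(K_X+\Delta)\equiv 0$ with $\Delta_Y\geq 0$; hence $K_Y\equiv -\Delta_Y$. Unless $\Delta=0$ and $X$ already has canonical singularities, $\Delta_Y>0$, so $K_Y$ is not pseudo-effective and running a $K_Y$-MMP terminates with a ruled surface or $\mathbb{P}^2$, \emph{not} with an abelian/K3/Enriques/(quasi-)bielliptic surface. Your sentence ``Blowing down further $(-1)$-curves, I may then assume $X$ is a smooth Calabi--Yau minimal model of Kodaira dimension zero'' is therefore simply wrong in the generality of the statement. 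The paper addresses this with a genuine argument: for $\nu(L)=1$ it first shows $L\cdot\Delta>0$ forces $\mathbb{Q}$-effectivity of $L$ via Riemann--Roch on the minimal resolution, then uses MMP contractions to kill components of $\Delta$ with $C^2<0$, reduces to $\Delta$ nef of numerical dimension $1$ (hence inducing a fibration) and finally to $\Delta=0$. Only \emph{after} this does the smooth classification enter, and even then the non-canonical klt case (where the minimal resolution is rational) must be treated separately.

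\textbf{The descent over imperfect fields is also a gap.} ``Pass to $\bar k$ and use Galois descent'' does not work: $\bar k/k$ is not Galois when $k$ is imperfect, $X_{\bar k}$ need not even be normal, and a num-semiample representative constructed over $\bar k$ has no reason to descend. The paper deals with this by working with the normalised reduced base change $Y=(X\times_k\bar k)^n_{\mathrm{red}}$ and the Tanaka conductor formula $K_Y+E\sim f^*K_X$; when $E>0$ one extracts $\mathbb{Q}$-effectivity of $f^*L$ from Riemann--Roch, and when $E=0$ one shows $X$ is geometrically normal and argues separately. None of this is captured by an appeal to Galois descent.

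Your treatment of the genuinely smooth CY case (K3, abelian, Enriques, (quasi-)bielliptic) over an algebraically closed field is broadly in the right direction and parallels Proposition~\ref{l-not-strictly-nef}, but that is only the final special sub-case of the argument, not the whole proof.
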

	
	In \cite{LP20, HL20} the authors propose a further generalisation of the abundance and semiampleness conjectures.
	
	\begin{conjecture}[Generalised abundance conjecture]\label{c-gen-ab}
	Let $(X, B)$ be a projective klt pair over a field $k$
	such that $K_X + B$ is pseudo-effective and let $M$ be a nef $\mathbb{Q}$-Cartier $\mathbb{Q}$-divisor on
	$X$. 
	If $K_X + B + M$ is nef, then it is num-semiample.
\end{conjecture}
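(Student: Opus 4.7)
My plan is to reduce \autoref{c-gen-ab} in the surface case to \autoref{t1-semi-ampl-CY}; the higher-dimensional case appears beyond reach at present. So assume $\dim X = 2$. Since abundance for klt surface pairs is known over arbitrary fields by \cite{tanaka2020abundance, Pos21}, $K_X+B$ is semiample, and I let $f\colon X \to Y$ denote its Iitaka fibration. I split into three cases according to $\kappa := \kappa(K_X+B) \in \{0,1,2\}$.

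If $\kappa = 2$, then $K_X+B$ is big and semiample, so $K_X+B+M$ is big and nef, and the base point free theorem for klt surfaces (available in all characteristics) yields semiampleness. If $\kappa = 0$, then $K_X+B \equiv 0$, so $(X,B)$ is a klt Calabi--Yau pair and $M \equiv K_X+B+M$ is nef; applying \autoref{t1-semi-ampl-CY} to $M$ produces a semiample $\bQ$-divisor numerically equivalent to $K_X+B+M$.

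The substantial case is $\kappa = 1$, in which $Y$ is a smooth projective curve and the general fibre $F$ of $f$ is a one-dimensional klt Calabi--Yau pair (either $\bP^1$ with boundary, or a smooth elliptic curve). I further subdivide by the intersection number $M \cdot F \geq 0$. If $M \cdot F > 0$, I would verify that for $a \gg 0$ the divisor $a(K_X+B+M) - (K_X+B) = (a-1)(K_X+B) + aM$ is big and nef: its self-intersection is positive since $(K_X+B)^2 = 0$, $(K_X+B)\cdot M = \deg(A)\cdot(M\cdot F) > 0$ (where $K_X+B \sim_{\bQ} f^*A$ with $A$ ample on $Y$), and $M^2 \geq 0$ by nefness of $M$ on a surface; the base point free theorem then gives semiampleness of $K_X+B+M$. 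If $M\cdot F = 0$, then $M$ is numerically $f$-trivial, and I would argue that $M \equiv f^*N$ for some nef $\bQ$-divisor $N$ on the curve $Y$; since $Y$ is one-dimensional, $N$ is numerically equivalent to an ample (hence semiample) $\bQ$-divisor, and consequently so is $K_X+B+M \equiv f^*(A+N)$.

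The main obstacle I anticipate is establishing the descent $M \equiv f^*N$ in the $\kappa = 1$, $M\cdot F = 0$ subcase, especially over imperfect or non-closed fields, where the generic fibre of $f$ may fail to be geometrically reduced or smooth. A clean route is likely to pass to a terminalisation of $(X,B)$, run a relative MMP over $Y$, and apply a characteristic-free version of Kawamata's criterion for divisors numerically trivial on the fibres of a contraction to a curve. Once that is in place, the remaining arguments above are essentially formal.
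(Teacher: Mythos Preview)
Your case division by $\kappa(K_X+B)$ parallels the paper's division by $\dim Z$, but there is a gap at the very first step: abundance for klt surfaces asserts that a \emph{nef} $K_X+B$ is semiample, whereas the hypothesis only gives pseudo-effectivity. For instance, if $X$ is the blow-up of a K3 surface at a point and $B=0$, then $K_X$ is the exceptional curve, effective but not nef, hence not semiample. So your Iitaka map $f$ is in general only rational, and the identities you rely on --- $K_X+B \sim_{\bQ} f^*A$ and $(K_X+B)^2=0$ in the $\kappa=1$ case, and $K_X+B \equiv 0$ in the $\kappa=0$ case --- can all fail on $X$ itself.

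The paper repairs this by first running a $(K_X+B)$-MMP $g\colon X \to X_{\min}$ and only then invoking abundance on $X_{\min}$. The $\kappa=1$ and $\kappa=2$ cases then go through essentially as you outline, with $f=h\circ g$ in place of your morphism (the bigness argument in the $L\cdot F>0$ subcase is rephrased as showing $f^*A+M$ is big, which sidesteps the now-false identity $(K_X+B)^2=0$). The $\kappa=0$ case, however, acquires a genuine extra step your sketch does not anticipate: the klt Calabi--Yau structure lives only on $X_{\min}$, so after applying \autoref{t1-semi-ampl-CY} to $L'=g_*L$ one must still transport num-semiampleness back to $L$ on $X$. The paper does this by choosing $0<t\ll 1$ so that $g$ is simultaneously a $(K_X+B+tM)$-MMP, which forces $g_*M$ nef and lets one compare $L$ with $g^*L'$ via the negativity lemma. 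By contrast, the obstacle you flag --- numerical descent of $M$ over the curve in the $\kappa=1$, $M\cdot F=0$ subcase --- is not where the difficulty lies: \autoref{p-descent-over-curve} delivers it directly (applied to $L$ rather than $M$, but equivalently since $K_X+B$ already descends), with no hypotheses on the fibres and no need for terminalisation or a relative MMP.
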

	
Following, in part, ideas of \cite{LP20} we give a direct proof of the generalised abundance conjecture for excellent surfaces
by reducing to the semiampleness conjecture on klt Calabi--Yau surfaces over a field of characteristic $p>0$.

    \begin{theorem}[See \autoref{c-abundance-general-exc-surf}]\label{t-gen-abund-2}
	Let $\pi \colon X\to T$ be a projective $R$-morphism of quasi-projective integral normal schemes over $R$.
	Suppose that $(X,B)$ is a klt surface such that
	\begin{enumerate}
		\item $K_{X}+B$ is pseudo-effective over $T$;
		\item $M$ is a nef $\mathbb{Q}$-Cartier $\mathbb{Q}$-divisor over $T$;
		\item $L:=K_X+B+M$ is nef $\mathbb{Q}$-Cartier  $\bQ$-divisor over $T$.
	\end{enumerate} 
	Then $L$ is num-semiample over $T$.
    \end{theorem}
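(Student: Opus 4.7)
The plan is to reduce \autoref{t-gen-abund-2} to the semiampleness conjecture on klt Calabi--Yau surface pairs (\autoref{t1-semi-ampl-CY}) by running a relative MMP and analysing the Iitaka fibration of the resulting minimal model.

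First I would run a relative $(K_X+B)$-MMP over $T$. Since $K_X+B$ is pseudo-effective over $T$ and the MMP for klt surface pairs is available over excellent bases, this produces a minimal model $\phi \colon (X,B) \dashrightarrow (Y,B_Y)$ with $K_Y+B_Y$ nef over $T$. At each step a $(K_X+B)$-negative extremal curve $C$ is contracted, and since $L$ is nef, $M \cdot C \geq -(K_X+B) \cdot C > 0$. A standard surface negativity-lemma computation (using $C^2 < 0$ to solve for the discrepancy of the pushforward) then shows that both $M$ and $L$ remain nef after the step. Hence I may replace $(X,B)$ by $(Y,B_Y)$ and assume $K_X+B$, $M$, and $L$ are all nef over $T$; num-semiampleness of $L$ on $Y$ will then descend back to the original $X$ along the birational modification.

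Next, by abundance for klt surface pairs \cite{tanaka2020abundance, Pos21} applied relatively, $K_X+B$ is semiample over $T$, inducing a contraction $f \colon X \to Z$ over $T$ with $K_X+B \sim_{\mathbb{Q},Z} 0$. Denoting by $\kappa$ the relative Iitaka dimension of $K_X+B$ over $T$, I would then split into cases. If $\kappa = \dim(X/T)$, then $K_X+B$ is big over $T$, hence so is $L = K_X+B+M$, and a nef and big divisor on a klt surface is num-semiample by the base-point-free theorem. If $\kappa = 0$, then $K_X+B \equiv_T 0$, so $(X,B)$ is a relative klt Calabi--Yau pair and $L \equiv_T M$; applying the semiampleness conjecture on the fibres of $X \to T$---known in characteristic zero by \cite[Theorem 8.2]{LP20} and in positive characteristic by \autoref{t1-semi-ampl-CY}---gives fibrewise num-semiampleness, which I would then upgrade to a relative statement via base change to the geometric generic and closed fibres of $\pi$. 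In the remaining intermediate case $f$ has one-dimensional fibres on which nef divisors are automatically semiample, and a canonical-bundle-formula analysis of $f$ combined with the previous cases yields the result.

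The main obstacle I anticipate is the Calabi--Yau case $\kappa = 0$: promoting fibrewise num-semiampleness to a genuinely relative statement over $T$ requires controlling how the semiample representative varies across generic and closed fibres, which in the mixed-characteristic setting of an excellent base may involve fibres of different characteristics and therefore two different inputs (\cite{LP20} and \autoref{t1-semi-ampl-CY}). A secondary technical issue is verifying that the MMP and abundance inputs are available in the necessary relative generality over an excellent base, which should follow from existing surface MMP literature.
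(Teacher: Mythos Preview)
Your overall shape---run a $(K_X+B)$-MMP, use abundance to get an Iitaka-type fibration, and feed the Calabi--Yau case into \autoref{t1-semi-ampl-CY}---matches the paper's. But the reduction step ``replace $(X,B)$ by the minimal model $(Y,B_Y)$ and then descend num-semiampleness of $\phi_*L$ back to $L$'' is a genuine gap. The contracted curves $C$ satisfy $(K_X+B)\cdot C<0$, and you yourself observe $M\cdot C>0$; hence typically $L\cdot C>0$, so the contraction is \emph{not} $L$-trivial. Concretely, for a single step one has $\phi^*(\phi_*L)=L+aC$ with $a=-\dfrac{L\cdot C}{C^2}>0$, so $L$ is not the pullback of $\phi_*L$ and there is no formal mechanism by which num-semiampleness of $\phi_*L$ transfers to $L$. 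The paper does not make this replacement: it keeps $L$ on the original $X$, uses the minimal model $Y$ only as an auxiliary target, and then splits on $\nu(\phi_*L)$. When $\phi_*L$ is big it perturbs to $K_X+B+tM$ for small $t$ (so that $X\to Y$ is also a $(K_X+B+tM)$-MMP) to force $L$ big on $X$; when $\nu(\phi_*L)=1$ it uses that $\phi$ is an isomorphism over the generic point of the induced curve $W$, so $L|_{X_{k(W)}}\equiv 0$ and one can descend via \autoref{p-descent-over-curve}; when $\nu(\phi_*L)=0$ it uses the negativity lemma to get $L\equiv 0$. None of this is encapsulated by ``descend back along the birational modification''.

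Two secondary points. First, appealing to a ``canonical-bundle-formula analysis'' in the intermediate-$\kappa$ case is exactly what the paper says it must avoid in positive and mixed characteristic; instead, when the Iitaka base is a curve one argues directly that either $L$ is trivial on the generic fibre (and descends) or $L$ is big via the elementary inequality $(f^*A+M)^2\ge 2f^*A\cdot M>0$. Second, in the $\kappa=0$ case you propose to prove num-semiampleness fibrewise and then ``upgrade'' to a relative statement; the paper sidesteps this by first reducing on $\dim T$ (the cases $\dim T\ge 1$ being handled by bigness or by \autoref{p-descent-over-curve} after restricting to the generic fibre, which is a curve), so that the Calabi--Yau input \autoref{t1-semi-ampl-CY} is only ever invoked over a field.
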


We note that similar results have also been obtained in the context of generalised surface pairs in characteristic 0 by Han-Liu \cite{HL20} and following their strategy we characterise when generalised abundance fails if $K_X+B$ is not pseudo-effective (\autoref{c-failure-gen-ab}).
As an application thereof, together with a careful analysis over imperfect fields, we provide a proof of Serrano's conjecture \cite{Ser95} for klt surfaces and suitable threefolds in the positive and mixed setting.

\begin{corollary}[\autoref{Serrano}, \autoref{Serrano-3}] \label{easy-Serrano}
    Let $R$ be an excellent ring of finite Krull dimension with dualising complex. Let $X \to T$ be a projective contraction of quasi-projective $R$-schemes. Suppose that $(X,B)$ is klt, $-(K_{X}+B)$ is strictly nef and further that
    
    \begin{enumerate}
        \item $X$ has dimension $2$; or
        \item $X$ has dimension $3$, $\dim T >0 $ and the closed points of $R$ have residue fields of characteristic $p=0$ or $p > 5$ 
    \end{enumerate}
    
    Then $-(K_{X}+B)$ is ample.
\end{corollary}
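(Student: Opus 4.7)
Set $L:=-(K_X+B)$; the goal is to show $L$ is $T$-ample. The plan is to obtain num-semiampleness of $L$ from the generalised abundance machinery of the paper (\autoref{t-gen-abund-2} and its non-pseudo-effective companion \autoref{c-failure-gen-ab}) and then upgrade to ampleness using strict nefness. First, observe that $K_X+B$ cannot be pseudo-effective over $T$: on a surface the combination of $-(K_X+B)$ nef and $K_X+B$ pseudo-effective forces $K_X+B\equiv 0$ by a Hodge-index argument on the generic fibre of $\pi$, hence $L\equiv 0$, contradicting strict nefness (which must fail on some curve contained in a positive-dimensional fibre). In the threefold case with $\dim T>0$ the same reasoning applies to the generic fibre.

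To produce num-semiampleness in the surface case, pick a rational $\epsilon>0$ and set $M:=(1+\epsilon)L$. Then $M$ is nef $\mathbb{Q}$-Cartier and $K_X+B+M=\epsilon L$ is nef over $T$. Apply \autoref{c-failure-gen-ab}: either $\epsilon L$ (and hence $L$) is num-semiample over $T$, or we land in the exceptional configuration described there, which exhibits a curve $C$ with $L\cdot C=0$, impossible by strict nefness. So $L$ is num-semiample. Let $L'\equiv L$ be a semiample representative with associated $T$-morphism $\phi\colon X\to Y$; a positive-dimensional fibre of $\phi$ would carry a curve $C$ with $L\cdot C=L'\cdot C=0$, again contradicting strict nefness. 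Hence $\phi$ is finite, $L$ is $T$-big and $T$-nef, and relative Nakai--Moishezon over the excellent base $T$ concludes that $L$ is $T$-ample.

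For the threefold case the hypothesis $\dim T>0$ forces the relative dimension of $\pi$ to be at most $2$. On a fibre of dimension $\leq 1$ strict nefness is ampleness, while on a surface fibre $(F,B|_F)$, which remains klt in residue characteristic $0$ or $p>5$ by standard restriction results, the surface case above gives that $L|_F$ is ample; the threefold MMP tools used elsewhere in the paper then propagate fibrewise ampleness to $T$-ampleness via Kleiman's criterion. The main technical obstacle is the precise invocation of \autoref{c-failure-gen-ab}: one must verify that the exceptional configuration genuinely produces a zero-intersection curve for our specific $L$, rather than for some auxiliary divisor. This amounts to matching the Mori fibration produced by the $(K_X+B)$-MMP over $T$ with the auxiliary choice $M=(1+\epsilon)L$, and is where the delicate part of the argument sits.
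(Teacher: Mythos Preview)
Your reduction via \autoref{c-failure-gen-ab} is the same first move as the paper's proof of \autoref{Serrano}, but the argument breaks at the crucial step. You claim that the exceptional configuration in \autoref{c-failure-gen-ab} ``exhibits a curve $C$ with $L\cdot C=0$''. It does not. Setting $M=(1+\epsilon)L$ and landing in the exceptional case gives you exactly: $T=\Spec(k)$, $-(K_X+B)\equiv t\Bf{M}_X$ with $t=1/(1+\epsilon)$, and $\nu(\Bf{M}_X)=\nu(L)=1$. In other words, you are reduced to the situation $L=-(K_X+B)$ strictly nef over a field with $L^2=0$. Nothing in \autoref{c-failure-gen-ab} produces an $L$-trivial curve here, and there is no contradiction with strict nefness yet; this is precisely the case where the question is genuinely open \emph{a priori}. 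Your closing caveat correctly senses that this is ``the delicate part'', but the diagnosis (``matching the Mori fibration with the choice of $M$'') is off: no such matching will manufacture a zero-intersection curve, because none need exist.

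The paper deals with this residual case by direct Riemann--Roch computations: one shows $H^2(X,\mathcal{O}_X(nL))=0$ for large $n$, then distinguishes whether $X$ is geometrically normal. If not (or if $B\neq 0$), the conductor/boundary contributes a positive $f^*L\cdot(E+f^*B)$ term to Riemann--Roch on a resolution of the normalised base change, forcing $h^0(nL)>0$. If $X$ is geometrically normal with $B=0$, one runs a $K_X$-MMP to a del Pezzo surface, deduces $H^1(X,\mathcal{O}_X)=0$, and again Riemann--Roch gives effectivity. In all cases $L$ becomes $\mathbb{Q}$-effective and hence ample. None of this is in your argument, and it is not a technicality one can absorb into \autoref{c-failure-gen-ab}.

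For the threefold case, ``Kleiman's criterion'' is not how the paper concludes, and it is not clear how you would apply it. The paper observes that ampleness of $L_t$ on the generic fibre (a surface or curve) makes $L_t$ relatively big; then $2L_t-(K_X+B)=L_t+M$ is big and nef, so the base-point-free theorem of \cite{bhatt2020} gives semiampleness, and strict nefness upgrades this to ampleness. Your ``propagate fibrewise ampleness to $T$-ampleness'' is too vague to stand as written.
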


Using Serrano's conjecture we then show in \autoref{t-num-non-van} that the numerical non-vanishing conjecture of \cite{HL20} holds for generalised klt surface pairs even if $K_{X}+B$ is not pseudo-effective. 
We recall that the numerical non-vanishing conjecture is still open even for threefolds over the complex numbers $\mathbb{C}$ (see \cite{LPTX22} for recent progress).

To commplete the picture, we also show prove that generalised abundance holds for generalised lc pairs if the b-nef part is b-semiample \autoref{lc-abund} in positive and mixed characteristic. The main difficulty here is the lack of Bertini-type theorems.

Recently, a large part of the MMP for threefolds in mixed characteristic $(0,p>5)$ has been established in \cite{witaszek2020keels, bhatt2020, takamatsu2021minimal}. 
We conclude by showing an application of the semiampleness conjecture for surfaces to arithmetic klt CY threefolds.

\begin{theorem}\label{t-CY-3folds}
    Let $R$ be an excellent DVR with residue field $k$ of positive characteristic $p>5$.
    Let $\pi \colon (X,B) \to \Spec(R)$ be a projective dominant morphism such that $(X,B)$ is klt and $K_X+B \equiv 0$ over $R$.
    
    If  $L$ is a nef $\mathbb{Q}$-Cartier $\mathbb{Q}$-divisor over $R$, then $L$ is num-semiample over $R$.
\end{theorem}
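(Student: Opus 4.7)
The plan is to reduce the three-dimensional statement to the two-dimensional surface semiampleness result (\autoref{t1-semi-ampl-CY}) applied fibrewise, and then to glue the fibrewise models using the relative threefold MMP machinery in mixed characteristic developed in \cite{witaszek2020keels, bhatt2020, takamatsu2021minimal}. Since $\pi$ is a projective dominant morphism onto the one-dimensional base $\Spec R$, the relative dimension equals two and the relative numerical dimension $\nu:=\nu_R(L)$ lies in $\{0,1,2\}$; the argument will proceed by case analysis on $\nu$.

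Let $K$ denote the fraction field of $R$ and let $X_\eta$, $X_s$ be the generic and special fibres of $\pi$. The generic fibre $(X_\eta, B_\eta)$ is a projective klt Calabi--Yau surface over $K$. Since $X_s$ is a principal Cartier divisor on $X$, adjunction yields $K_{X_s}+B_s\equiv 0$ on the special fibre, and after verifying that $(X_s,B_s)$ is klt via the usual fibration arguments in mixed characteristic (and possibly passing to a suitable model), $(X_s,B_s)$ is a klt Calabi--Yau surface over the residue field $k$ of characteristic $p>5$. Thus \autoref{t1-semi-ampl-CY} applies to $L_s:=L|_{X_s}$, while either \autoref{t1-semi-ampl-CY} or \cite[Theorem 8.2]{LP20} applies to $L_\eta:=L|_{X_\eta}$, according to whether $R$ has equal or mixed characteristic.

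The cases $\nu\in\{0,2\}$ are straightforward. If $\nu=0$, then $L\equiv_R 0$ and one takes $L'=0$. If $\nu=2$, then $L$ is big and nef over $R$; writing $L\equiv_R(K_X+B)+L$ and invoking the base-point-free theorem for klt threefolds in mixed characteristic $p>5$ from \cite{witaszek2020keels, bhatt2020, takamatsu2021minimal}, one obtains that $L$ is $\pi$-semiample.

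The main case, and in my view the central obstacle, is $\nu=1$. The fibrewise applications of the surface semiampleness result produce semiample representatives $L'_\eta\equiv L_\eta$ and $L'_s\equiv L_s$ inducing contractions $f_\eta\colon X_\eta\to C_\eta$ and $f_s\colon X_s\to C_s$ onto curves. I would spread out $f_\eta$ over $\Spec R$ --- for instance by taking the normalisation of the closure of its graph inside a suitable relative projective space and resolving the resulting indeterminacies via the relative threefold MMP --- to produce a relative contraction $f\colon X\to C$ over $R$, and then verify that, possibly after twisting $L$ by a numerically trivial element of $\Pic^{\tau}(X/R)$, the divisor $L$ pulls back numerically from an ample class on $C$. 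The genuinely delicate step is the compatibility of the two fibrewise models: since $L'_\eta$ and $L'_s$ are only specified up to numerical equivalence, one must analyse the discrepancy between them inside $\NS(X/R)$ to ensure that a single global representative of $L$ induces both fibrations simultaneously. This matching of numerical classes on the generic and special fibres is where the bulk of the technical work should lie.
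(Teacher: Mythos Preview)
Your case split on the relative numerical dimension and the treatment of $\nu=0$ and $\nu=2$ are fine and match the paper. The difficulty is entirely in $\nu=1$, and here your proposal is only a sketch that does not close.

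There are two concrete problems. First, the assertion that $(X_s,B_s)$ is a klt Calabi--Yau surface is not justified: the special fibre need not be irreducible, reduced, or normal, and ``the usual fibration arguments'' together with ``possibly passing to a suitable model'' is not a proof. Even after a semistable or dlt modification, the central fibre is typically a non-irreducible slc surface, and \autoref{t1-semi-ampl-CY} does not apply to it. Second, and more seriously, the gluing step you flag as ``the bulk of the technical work'' is the entire content of the $\nu=1$ case, and you have not carried it out. Spreading out $f_\eta$ and matching it with a separately-produced $f_s$ inside $\NS(X/R)$ is exactly the kind of compatibility statement that one cannot get for free; there is no mechanism in your outline forcing the two numerical classes to agree up to a global twist.

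The paper avoids both issues by not attempting to build the global fibration directly. Instead it reduces, via \autoref{l-abund-effective} and threefold abundance \cite{BBS}, to showing merely that $L$ is \emph{numerically effective} over $R$. For this only the generic fibre is used: the contraction $X_K\to W$ coming from \autoref{t-num-semiampleness} is compactified over $\Spec(R)$ by Nagata and resolved, yielding $Y\to X$ and $Y\to Z$ with $Z$ regular and projective over $R$. One then pulls back a big $\mathbb{Q}$-divisor $D$ on $Z$ restricting to the right class on the generic fibre, and the exact sequence of \autoref{l-Picard-DVR} shows that $\pi^*L-\psi^*D$ is supported on the central fibre; adding a large multiple of $X_k$ makes everything effective. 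This bypasses any analysis of the special fibre and any gluing of fibrewise models.
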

	
	\textbf{Acknowledgments.}
	The authors thank P.~Cascini, S. Filipazzi, V. Lazić and R.~Svaldi for interesting discussions and comments on the topic of this note. 
	We are grateful to the referee for reading the manuscript carefully and suggesting modifications which improved the readability of the article.

	FB is supported by the NSF under grant number DMS-1801851 and LS is grateful to the EPSRC for his funding.
	
	\section{Preliminaries}
	
	\subsection{Notation}
	
	\begin{enumerate}
		\item In this article, a base ring $R$ will always denote an excellent domain of finite Krull dimension admitting a dualizing complex $\omega_R^{\bullet}$. We assume $\omega_R^{\bullet}$ is normalised as explained in \cite{bhatt2020}.
		\item For a field $k$ we denote by $k^{\text{sep}}$ (resp.~ $\overline{k}$) a separable (resp.~ an algebraic) closure of $k$.
		\item For an integral scheme $X$ with generic point $\eta$, its function field $k(X)$ is the field $\mathcal{O}_{X,\eta}$.
		\item If $X$ is an $\mathbb{F}_p$-scheme, we denote by $F \colon X \to X$ its (absolute) \emph{Frobenius morphism}. We say $X$ is \emph{$F$-finite} if $F$ is a finite morphism.
		\item For a field $k$, 
		we say that $X$ is a {\em variety over} $k$ or a $k$-{\em variety} if 
		$X$ is an integral scheme that is separated and of finite type over $k$. 
 		\item Given a scheme $X$, we denote by $X_{\text{red}}$ the reduced closed subscheme of $X$ underlying the same topological space (see \stacks{01IZ}).
		\item  We say that $(X, \Delta)$ is a \emph{log pair} if $X$ is a normal excellent integral
		pure $d$-dimensional Noetherian scheme with a dualizing complex, $\Delta$ is an effective $\mathbb{Q}$-divisor and $K_X+\Delta$ is $\mathbb{Q}$-Cartier. The dimension of $(X,\Delta)$ is the total dimension of $X$.
		\item We will follow \cite{kk-singbook} and \cite[Section 2.5]{bhatt2020} for the definition of singularities of log pairs (such as klt, lc).
		\item We refer to \cite[Section 2.5]{bhatt2020} and \cite{Laz04} for notions of positivity (such as big, nef, pseudo-effective) for $\mathbb{Q}$-Cartier $\mathbb{Q}$-divisors, relative to a projective morphism of separated Noetherian schemes.
		\item Let $L$ be a Cartier divisor on an integral scheme $X$ of finite type over $R$. 
		We denote by $\text{Bs}(L)$ the base locus of $L$, considered as a closed reduced subscheme of $X$.
		We denote by $\textbf{SB}(L):= \bigcap_{m > 0}\text{Bs}(mL)$ the \emph{stable base locus} of $L$ over $R$.
		\item A morphism $f \colon X \to Y$ of normal schemes is called a \emph{contraction} if $f$ is proper and $f_*\mathcal{O}_X=\mathcal{O}_Y$.
	\end{enumerate}

\subsection{Num-semiample divisors}

In this section, we fix $S$ to be a Noetherian excellent base scheme.
Given a proper scheme $X$ over $S$, we define a \emph{curve} in $X$ over $S$ to be an integral closed sub-scheme $C \subset X$  of dimension $1$ such that $C$ is proper over some closed point $s \in S$.
If it is clear from the context, we will omit to mention $S$.

\subsubsection{Nef and num-semiample}

The notion of nefness is numerical, while semiampleness is not as the example of a torsion non-trivial line bundle on an elliptic curve shows. 
The notion of numerical semiampleness is an interpolation between the two: while remaining a numerical condition, it implies the existence of a contraction morphism to a scheme.

\begin{definition}
	Let $X$ be a proper $S$-scheme. A $\bQ$-Cartier $\bQ$-divisor $L$ on $X$ is said to be \textit{semiample} (resp.~ \textit{num-semiample}) over $S$ if there exists a proper contraction $f \colon X \to Z$  of $S$-schemes and an ample $\bQ$-Cartier $\bQ$-divisor $A$ on $Z$ such that $L \sim_{\mathbb{Q}} f^*A$ (resp.~ $L \equiv f^*A$) over $S$.
\end{definition} 

Clearly a num-semiample  $\bQ$-Cartier $\bQ$-divisor is nef but it is easy to construct nef divisors which are not num-semiample (see \cite[Example 6.1]{LP20}).
Strictly nef divisors will appear frequently in our proofs.

\begin{definition}
	Let $X$ be a projective $S$-scheme and let $L$ be a $\mathbb{Q}$-Cartier $\mathbb{Q}$-divisor on $X$. We say $L$ is \emph{strictly nef} over $S$ if for every curve $C$ over $S$, we have $L \cdot C>0$.
\end{definition}
Note that the sum of a nef and a strictly nef line bundle is strictly nef.
We recall the definition of numerical dimension for nef divisors.

\begin{definition}
	Let $X$ be a normal projective variety defined over a field $k$ and let $L$ be a nef $\mathbb{Q}$-Cartier $\mathbb{Q}$-divisor.
	The \emph{numerical dimension} of $L$ is defined as 
	$$ \nu(L) := \text{max} \left\{ d \in \mathbb{Z}_{\geq 0} \mid L^d \not \equiv 0  \right\} $$
\end{definition}

\subsubsection{Descent of relatively numerically trivial divisors}

We collect some results on descent of trivial divisors.
We recall a descent for divisors which are $\mathbb{Q}$-linearly trivial on the generic fibre used successfully in \cite{CT20, Wit21}.

\begin{proposition}\label{p-descent-equidimensional}
 Let $f \colon X \to Z$ be a proper contraction between normal projective $S$-schemes, where $Z$ is 1-dimensional.
 Let $L$ be a nef $\mathbb{Q}$-Cartier  $\bQ$-divisor on $X$ such that $L|_{X_{k(Z)}} \sim_{\mathbb{Q}} 0$, where $X_{k(Z)}$ is the generic fibre. 
 Then there exists a $\mathbb{Q}$-Cartier $\bQ$-divisor $L_Z$ on $Z$ such that $L \sim_{\mathbb{Q}} f^*L_Z$.
\end{proposition}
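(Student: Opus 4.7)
The strategy is to convert the generic-fibre hypothesis into a statement about vertical divisors on $X$, and then to descend using a fibrewise Zariski-type negativity argument.

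Since $L|_{X_{k(Z)}} \sim_\bQ 0$, we may choose an integer $m > 0$ and a rational function $\varphi \in k(X)^{*}$ whose principal divisor on $X$ restricts to $mL|_{X_{k(Z)}}$ on the generic fibre. Set $D := mL - \mathrm{div}(\varphi)$; this is a nef $\bQ$-Cartier $\bQ$-divisor numerically equivalent to $mL$ whose restriction to $X_{k(Z)}$ vanishes, so $\mathrm{supp}(D)$ is contained in a finite union of closed fibres $X_{z_1}, \ldots, X_{z_n}$. Since $Z$ is normal of dimension one, each scheme-theoretic fibre $X_{z_i} = f^{*}[z_i]$ is Cartier on $X$.

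The heart of the argument is to show that, for each $i$, the part $D_{z_i}$ of $D$ supported on $X_{z_i}$ is a rational multiple of $X_{z_i}$. Writing $X_{z_i} = \sum_j b_j E_j$ and $D_{z_i} = \sum_j a_j E_j$ along the irreducible components of $X_{z_i}$, set $c := \max_j (a_j / b_j) \in \bQ$, so that $F := c X_{z_i} - D_{z_i}$ is effective and has zero coefficient on some component $E_{j_0}$. Assume for contradiction that $F \neq 0$. Since $f$ is a contraction, Zariski's connectedness theorem implies that $X_{z_i}$ is connected, so there is a component $E_{j_0} \not\subset \mathrm{supp}(F)$ meeting some $E_{j_1} \subset \mathrm{supp}(F)$ along a codimension-one subscheme of $E_{j_0}$. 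A sufficiently general complete-intersection curve $C \subset E_{j_0}$ passing through $E_{j_0} \cap E_{j_1}$ then satisfies $F \cdot C > 0$. On the other hand, $X_{z_i} \cdot C = f^{*}[z_i] \cdot C = 0$ by the projection formula (as $f_{*}C = 0$), and since $C$ is disjoint from the other fibres in $\mathrm{supp}(D)$, one computes $F \cdot C = c X_{z_i} \cdot C - D \cdot C = -D \cdot C \leq 0$ by nefness of $D$, a contradiction. Hence $D_{z_i} = c_i X_{z_i}$ for some $c_i \in \bQ$.

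Summing over $i$ gives $D = f^{*}\bigl(\sum_i c_i [z_i]\bigr)$, so $L \sim_\bQ f^{*} L_Z$ with $L_Z := \tfrac{1}{m} \sum_i c_i [z_i]$, as required. The main technical obstacle is the production of a curve $C \subset E_{j_0}$ with $F \cdot C > 0$ in the higher-dimensional setting: this is the higher-dimensional content of the classical Zariski negativity lemma and relies on a Bertini-type moving argument on the fibre components $E_j$, combined with the connectedness of $X_{z_i}$.
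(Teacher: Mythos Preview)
Your argument is correct. The paper's own proof is a one-line deferral to \cite[Lemma~2.17]{CT20}, observing only that the hypotheses of that lemma are immediate when the base is $1$-dimensional. What you have written is essentially the content of such a descent lemma: clear denominators to obtain a vertical $\bQ$-Cartier divisor $D\sim_{\bQ} mL$, and then run a Zariski-type negativity argument fibre by fibre using connectedness of the fibres and nefness. So the two are not different in spirit; you have unpacked the citation.

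Two small remarks on the write-up. First, the claim that $E_{j_0}$ and $\mathrm{supp}(F)$ meet in codimension one in $E_{j_0}$ does hold, but the reason is simply that $F$ is $\bQ$-Cartier and $E_{j_0}\not\subset\mathrm{supp}(F)$, so $F|_{E_{j_0}}$ is an effective nonzero $\bQ$-Cartier divisor, hence of pure codimension one by Krull's Hauptidealsatz; you do not need to argue via the geometry of $E_{j_0}\cap E_{j_1}$. Second, producing the curve $C$ with $F\cdot C>0$ does not actually require a Bertini-type theorem: with $H$ ample on $X$ and $d=\dim E_{j_0}$, the number $F\cdot H^{d-1}\cdot [E_{j_0}]$ is the degree of the nonzero effective $\bQ$-Cartier divisor $F|_{E_{j_0}}$ against an ample class, hence strictly positive, and any complete-intersection curve representing $H^{d-1}\cdot[E_{j_0}]$ will do. This removes the ``technical obstacle'' you flag at the end.
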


\begin{proof}
It is sufficient to note that hypothesis of \cite[Lemma 2.17]{CT20} are easily verified when the base has dimension 1.
\end{proof}

We now discuss numerical descent of numerically trivial nef divisors. 
In characteristic $0$ several results are proven in \cite{Leh15} and \cite[Lemma 3.1]{LP20} and their analogues for threefolds in positive characteristic are discussed in \cite[Section 5]{BW17}.
We begin with the case of numerical descent over a curve. 

\begin{lemma}\label{l-descent-num-triv-base}
    Let $f \colon X \to Y$ be a projective dominant morphism of integral $S$-quasi-projective excellent schemes. Let $L$ be a $\mathbb{Q}$-Cartier $\mathbb{Q}$-divisor and suppose that 
	\begin{enumerate}
		\item $L$ is nef;
		\item $L|_{X_{k(Y)}} \equiv 0$; and
		\item $Y$ has dimension $1$.
	\end{enumerate}
	Then $L \equiv_f 0$. 
\end{lemma}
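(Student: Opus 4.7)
The plan is to prove $L \equiv_f 0$ by showing directly that $L \cdot C = 0$ for every curve $C \subset X$ contracted by $f$. Any such $C$ lies in a fibre $F_y := f^{*}(y)$ over a closed point $y \in Y$, so it is enough to prove that $L$ restricts to a numerically trivial divisor class on every closed fibre of $f$. Set $d := \dim X$, so the fibres have dimension $d-1$.

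First I would make some harmless reductions. After Stein factorisation we may assume that $f$ is a contraction, and after normalising $Y$ and base-changing we may assume that $Y$ is regular and integral of dimension $1$; in particular every closed point $y \in Y$ is Cartier and $F_y = f^{*}(y)$ is a Cartier divisor on $X$. Fix now an ample Cartier divisor $A$ on $X$. The projection formula for the projective morphism $f$ gives
$$L \cdot A^{d-2} \cdot F_y \;=\; f_*(L \cdot A^{d-2}) \cdot y.$$
The cycle $L \cdot A^{d-2}$ has dimension $1$ and pushes forward on the integral $1$-dimensional base $Y$ to a multiple $c \cdot [Y]$, where the coefficient is $c = L|_{X_{k(Y)}} \cdot (A|_{X_{k(Y)}})^{d-2}$. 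Since $A|_{X_{k(Y)}}$ is ample on the generic fibre and $L|_{X_{k(Y)}} \equiv 0$ by hypothesis, this generic degree vanishes, so $L \cdot A^{d-2} \cdot F_y = 0$ for every closed $y \in Y$.

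Now I would decompose $F_y = \sum m_i F_y^{(i)}$ into its reduced irreducible components. The vanishing above becomes
$$\sum_i m_i \left( L|_{F_y^{(i)}} \cdot (A|_{F_y^{(i)}})^{d-2}\right) \;=\; 0.$$
Each summand is nonnegative (a nef divisor paired against a power of an ample class), so each summand vanishes individually. On a component $F_y^{(i)}$ of dimension $d-1$, the divisor $L|_{F_y^{(i)}}$ is nef with zero pairing against the top power $(A|_{F_y^{(i)}})^{d-2}$ of the ample class $A|_{F_y^{(i)}}$. The Khovanskii--Teissier log-concavity inequalities propagate this vanishing to $L|_{F_y^{(i)}}^{\,k} \cdot (A|_{F_y^{(i)}})^{d-1-k} = 0$ for all $k \geq 1$, whence $\nu(L|_{F_y^{(i)}}) = 0$ and $L|_{F_y^{(i)}} \equiv 0$. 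Applied to each component, this yields $L \cdot C = 0$ for every curve $C$ lying in a fibre of $f$, as required.

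The step I expect to require the most care is the last: deducing $L|_{F_y^{(i)}} \equiv 0$ from the vanishing of a single top intersection number. Because components of closed fibres may be non-normal and defined over imperfect residue fields, one must avoid Bertini-style hyperplane cutting and rely purely on the numerical input (Kleiman's characterisation of numerical dimension via the Khovanskii--Teissier inequalities). A short check is also needed to confirm that the hypothesis $L|_{X_{k(Y)}} \equiv 0$ is preserved under the Stein factorisation and normalisation of $Y$ used in the first reduction.
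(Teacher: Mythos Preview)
Your argument is correct and takes a genuinely different route from the paper. After identical preliminary reductions (normalise, take Stein factorisation, so that $f$ becomes a flat contraction onto a regular one-dimensional $Y$), the paper argues by induction on the relative dimension: the base case of one-dimensional fibres is quoted from \cite[Lemma~5.3]{BW17}, and for higher relative dimension one chooses $D\in|mH|$ through the given vertical curve $C$ but avoiding the generic points of the fibre $X_y$, so that some \emph{horizontal} component $Z\subset D$ still contains $C$, and one replaces $X$ by $Z$. You instead obtain $L\cdot A^{d-2}\cdot F_y=0$ in one stroke via constancy of intersection numbers in the flat family (this is the honest content of your projection-formula identity, and flatness is automatic once $Y$ is a regular curve and $X$ is integral), split over the irreducible components of $F_y$ by positivity, and then invoke on each $(d-1)$-dimensional component the standard fact that a nef divisor $N$ with $N\cdot A^{m-1}=0$ for some ample $A$ is numerically trivial. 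Your packaging is more modular; the trade-off is that the last fact is itself most easily proved by choosing a member of $|mA|$ through a given curve and inducting on dimension, so under the hood the two proofs are closer than they first look. One small correction to your final paragraph: the Khovanskii--Teissier log-concavity $s_k^2\ge s_{k-1}s_{k+1}$ (with $s_k=N^k\cdot A^{m-k}$) only propagates the vanishing from $s_1=0$ to $s_2=0$ before the inequality becomes $0\ge 0$ and yields nothing further, so log-concavity alone does not give $N\equiv 0$; the implication you actually need is the direct one ``$N$ nef and $N\cdot A^{m-1}=0\Rightarrow N\equiv 0$'', which is standard (see e.g.\ \cite{Laz04}) but is not a formal consequence of Khovanskii--Teissier.
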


\begin{proof}
	We can freely replace $Y$ with its normalisation and $X$ by the normalisation of the corresponding fibre product. Then, after taking a Stein factorisation, we may suppose $Y$ is regular and $X \to Y$ is a flat contraction.
	Let $C$ be a curve contracted to a closed point $y\in Y$. It suffices to show $L \cdot C=0$. Since $X \to Y$ is flat, the fibres are of pure dimension $d$. If $d=1$ then $L \cdot C=0$ by the argument of \cite[Lemma 5.3]{BW17}. 
	
	Otherwise if $d > 2$, we cut with a very ample Cartier divisor and proceed by induction. More precisely, let $H$ be an ample Cartier divisor on $X$ and write $\mathcal{I}$ for the ideal sheaf defining $C$. Then for $m \gg 0$, $\ox(mH)\otimes \mathcal{I}$ is globally generated. Thus we can find $D \sim_{\mathbb{Q}} mH$ such that $D$ contains $C$ but doesn't vanish at the generic point of any component of the fibre over $y$. Then no component of $D$ can be contracted over $y$, and thus there is some horizontal component $Z$ containing $C$. Replacing $X$ with $Z$ we see that the result holds by induction on $k$.
\end{proof}

\begin{proposition}\label{p-descent-over-curve}
Let $X$ be a normal integral scheme of dimension at most 3 and let $f \colon X \to C$ be a projective contraction over $S$ with generic fibre $X_{k(C)}$.
Suppose that $C$ is a regular 1-dimensional scheme and $L$ is a nef $\mathbb{Q}$-Cartier $\mathbb{Q}$-divisor on $X$ such that $L|_{X_{k(C)}} \equiv 0$.
Then there exists a $\mathbb{Q}$-Cartier $\mathbb{Q}$-divisor $D$ on $C$ such that $L \equiv f^*D$.
\end{proposition}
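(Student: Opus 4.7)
The plan is to first invoke \autoref{l-descent-num-triv-base} to upgrade the hypothesis $L|_{X_{k(C)}} \equiv 0$ to the relative statement $L \equiv_f 0$, and then to produce the divisor $D$ by a case analysis on $\dim X$. If every curve of $X$ is vertical over $C$ (in particular this happens when $C$ is not proper over any closed point of $S$) then $L \equiv 0$ and $D := 0$ works. Otherwise $C$ must be proper over some closed point $s \in S$, so $C$ is a smooth proper curve over $k(s)$. Since $f_{*}\mathcal{O}_{X} = \mathcal{O}_{C}$ by assumption, $f$ has geometrically connected fibres. The case $\dim X = 1$ is immediate, and the nontrivial content lies in dimensions $2$ and $3$.

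When $\dim X = 2$ the map $f$ is a surface fibration. Let $F = f^{*}[c_0]$ for a closed point $c_0 \in C$, so $F^{2} = 0$ and $F \not\equiv 0$. The vanishing $L \equiv_f 0$ means $L$ has zero intersection with $F$ and with every irreducible component $F_{ji}$ of every fibre. By the Hodge index theorem on the normal projective surface $X$, together with Zariski's lemma (the intersection form on the components of a single fibre is negative semi-definite with kernel spanned by the fibre class), any nef divisor satisfying these orthogonality conditions must be numerically equivalent to a non-negative rational multiple $\lambda F$. Setting $D := \lambda [c_0]$ finishes this case.

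For $\dim X = 3$ we reduce to the surface case by cutting with very ample divisors. Fix a very ample $H$ on $X$ and choose $m \gg 0$ so that a sufficiently general section $Y \in |mH|$ is normal, integral, and projective by a Bertini-type argument. The restriction $g := f|_{Y} \colon Y \to C$ is a surface fibration, $L|_{Y}$ is nef, and $L|_{Y}|_{Y_{k(C)}} \equiv 0$ as the restriction of a numerically trivial divisor on $X_{k(C)}$ to a curve. The surface case applied to $(Y, g, L|_{Y})$ produces a $\mathbb{Q}$-divisor $D$ on $C$ with $L|_{Y} \equiv g^{*}D$. To see that the same $D$ descends $L$ on all of $X$, one verifies $L \cdot \gamma = f^{*}D \cdot \gamma$ curve by curve: vertical curves contribute zero on both sides from $L \equiv_f 0$, and for a horizontal curve $\gamma$ one either moves a section of $|mH|$ to one through $\gamma$ (its descended divisor is numerically equivalent to $D$ because $[Y'] \equiv [Y]$ in $N^{1}(X)$), or, alternatively, computes $L \cdot H^{2}$ on $X$ and compares with $g^{*}D \cdot H|_{Y}$ via the projection formula to pin down the universal slope $\deg D$ of $L$ on horizontal curves. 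The main obstacle is precisely this final step: producing normal sections through arbitrary horizontal curves in positive or mixed characteristic over imperfect residue fields is delicate, and the intersection-theoretic comparison avoids this at the cost of somewhat heavier bookkeeping of degrees over $C$.
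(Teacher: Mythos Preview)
Your overall architecture matches the paper exactly: invoke \autoref{l-descent-num-triv-base} to get $L \equiv_f 0$, then split on whether $C$ is contracted over $S$. The paper handles the case where $C$ is projective over a field by citing the arguments of \cite[Lemma 5.2]{BW17} rather than writing them out, whereas you attempt a direct proof by dimension. Your surface case via the Hodge index theorem and Zariski's lemma is correct and is essentially what lies behind the cited reference in dimension $2$.

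The threefold case, however, has a genuine gap. As you yourself flag, Bertini for \emph{normal} hyperplane sections is not available in the generality needed here (positive or mixed characteristic, possibly imperfect residue fields), so the existence of a normal integral $Y \in |mH|$ to which your surface argument applies is not justified. Your fallback option (b) does not close the gap either: computing $L \cdot H^{2}$ and comparing with $g^{*}D \cdot H|_{Y}$ yields a single number, but to check $L \equiv f^{*}D$ you need $L \cdot \gamma = (\deg D)\cdot \deg(\gamma/C)$ for \emph{every} horizontal curve $\gamma$, and these degrees over $C$ vary, so one scalar relation is not enough. Put differently, from $M := L - f^{*}D$ you know $M \equiv_f 0$ and $M|_{Y} \equiv 0$, hence $M \cdot H \cdot D' = 0$ for all divisors $D'$; but this says the $1$-cycle $M \cdot H$ is numerically trivial, which does not by itself force $M \cdot \gamma = 0$ for an arbitrary horizontal curve $\gamma$ not lying on any such $Y$. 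To complete the argument you would need to either reproduce the actual mechanism of \cite[Lemma 5.2]{BW17} or supply a different descent argument for $N^{1}$ along $f$ that does not rely on normal hyperplane sections.
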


\begin{proof}
By \autoref{l-descent-num-triv-base}, $L \equiv_f 0$. 
If $C$ is not contracted over $S$ then in fact $L \equiv_{S}0 $. Otherwise $C$ is projective over a field and we conclude by the arguments of \cite[Lemma 5.2]{BW17}.
\end{proof}

\subsection{Generalised pairs}

Generalised pairs have been introduced in \cite{BZ16} and since then they revealed to be a powerful tools in birational geometry over fields of characteristic 0.
Here $\mathbb{K}$ is either $\mathbb{Z}$ or $\mathbb{Q}$.

\begin{definition}
For an integral normal scheme $X$, an integral $\mathbb{K}$-b-divisor is an element
$$\mathbf{D} \in \mathbf{WDiv}(X)_{\mathbb{K}}=\lim_{Y \to X} \WDiv(Y)_{\mathbb{K}},$$
where $Y \to X$ run through all possible proper birational morphisms of normal schemes.
Given $Y \rightarrow X$ a proper birational morphism of normal schemes, we have a natural map 
$\text{tr}_{Y} \colon \textbf{WDiv}(X)_{\mathbb{K}} \to \text{WDiv}(Y)_{\mathbb{K}},$
called the trace. 
 We denote $\Bf{D}_{Y}= \text{tr} (\Bf{D})_{Y}$.

We say that $\Bf{D}$ is a \emph{$\mathbb{K}$-b-Cartier $\mathbb{K}$-b-divisor} if there is a model $X'\to X$ such that $\Bf{D}_{X'}$ is $\mathbb{K}$-Cartier and for any $\phi\colon X'' \to X'$ the equality $\Bf{D}_{X''}=\phi^{*}\Bf{D}_{X'}$ holds. 
In this case we say that $\Bf{D}$ \emph{descends} to $X'$.	
\end{definition}

Every $\mathbb{K}$-Cartier $\mathbb{K}$-divisor $D$ on $X$ induces a natural $\mathbb{K}$-Cartier $\mathbb{K}$-b-divisor $\Bf{D}=\overline{D}$ as follows:
$$(\overline{D})_Y=f^*D \text{ where } f\colon Y \to X \text{ is a proper birational morphism}.$$
In particular, $\Bf{D}=\overline{\textbf{D}}_{X'}$ if and only if $\Bf{D}$ descends on $X'$. 

\begin{definition}
Let $X \to T$ be a proper morphism of quasi-projective normal schemes over $R$.
Let $\Bf{D}$ be a $\mathbb{K}$-Cartier $\mathbb{K}$-b-divisor on $X$ and let $X'$ be a model on which $\Bf{D}$ descends.
If $\Bf{D}_{X'}$ is a nef (resp.~ semiample) $\mathbb{K}$-Cartier $\mathbb{K}$-divisor over $T$, we say that $\Bf{D}$ is \emph{b-nef} (resp.~ \emph{b-semiample}) over $T$.
\end{definition}

Clearly if $D$ is nef (resp.~ semiample) then $\overline{D}$ is b-nef (resp.~ b-semiample).
\begin{definition}
	A \emph{generalised pair} (or $g$-pair) $(X,B+\Bf{M})$ over $T$ consists of 
	\begin{enumerate}
		\item a projective morphism $f \colon X \to T$ of normal quasi-projective $R$-schemes;
		\item an effective $\mathbb{Q}$-divisor $B$ (called the boundary part);
		\item a b-nef $\mathbb{Q}$-Cartier $\mathbb{Q}$-b-divisor $\Bf{M}$ (called the moduli part);
		\item $K_X+B+\Bf{M}_X$ is $\mathbb{Q}$-Cartier.
	\end{enumerate}
\end{definition}

 If $\Bf{D}$ is b-nef (resp b-semiample) then for any $X'$ with $\Bf{D}=\overline{\Bf{D}_{X'}}$, the divisor $\Bf{D}_{X'}$ is nef (resp.~ semiample).
We recall the definition of singularities for generalised pairs.

\begin{definition}
	Let $(X, B+\Bf{M})$ be a $g$-pair over $T$.
	For every  proper birational morphism $\pi \colon Y \to X$ of normal schemes we can write 
	$$K_Y+B_Y+\Bf{M}_{Y} = \pi^* (K_X+B+\Bf{M}_{X}). $$ 
	The \emph{generalised discrepancy} of $E$ is $a(E, X, B+\Bf{M}) = -\coeff_{E}(B_Y).$
	We say $(X,B+\Bf{M})$ is \emph{generalised klt} (resp.~ \emph{generalised lc})  if $a(E,X,B+M) > -1$ (resp.~ $a(E,X,B+M) \geq -1$) for all divisors $E$ appearing on some birational model.
	
	If $(X,B+\Bf{M})$ is generalised lc, $(X,B)$ is dlt and $(X,B+(1+t)\bf{M})$ is generalised lc for some $t >0$ then we say the pair is \emph{generalised dlt}.
\end{definition}
 
If $(X,B)$ is klt/lc/dlt and $N$ is a nef $\mathbb{K}$-Cartier $\mathbb{K}$-divisor then $(X,B+\bf{N})$ is always generalised klt/lc/dlt for $\bf{N}=\overline{N}$.

We will often use the following result on singularities of surfaces.

\begin{lemma}\label{l-Q-fact}
	Let $(X, \Delta)$ be a dlt surface pair. Then $X$ has rational and $\mathbb{Q}$-factorial singularities.
\end{lemma}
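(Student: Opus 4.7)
The plan is to perform the standard reduction from dlt to klt and then invoke the known classification of klt surface singularities in the excellent setting.

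First I would note that the question is local on $X$, so we may assume $X = \Spec A$ for a (strictly Henselian) local ring. Second, I would perturb the boundary: since $(X,\Delta)$ is dlt, the pair $(X, \Delta - \epsilon \lfloor \Delta \rfloor)$ is klt for every sufficiently small rational $\epsilon > 0$. This follows because decreasing the coefficients of components of $\lfloor \Delta \rfloor$ strictly below $1$ turns the discrepancies that were equal to $-1$ into values strictly greater than $-1$, while the strict inequalities $a(E, X, \Delta) > -1$ for the remaining exceptional divisors, guaranteed by the dlt hypothesis, are preserved under small perturbations. Thus both $\mathbb{Q}$-factoriality and rationality of $X$ need only be checked under the assumption that $(X, \Delta)$ is klt, since perturbing the boundary does not change the underlying scheme $X$.

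At this point the claim reduces to the well-known fact that two-dimensional klt singularities are rational and $\mathbb{Q}$-factorial. Over a field of characteristic $0$ this is classical (see \cite[Section 4]{kk-singbook} or \cite{KM}). In positive characteristic the statement follows from the work of Tanaka (see \cite{tanaka2020abundance}), and the extension to the full generality of excellent schemes with dualising complex considered in this paper follows by combining this with \cite[Section 2.5]{bhatt2020}.

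The main obstacle is matching the generality of the reference to the generality of the hypothesis: the paper allows $X$ to be any excellent normal surface with dualising complex (in particular including mixed characteristic), so a single reference does not suffice. However, once the reduction to klt has been made, all the needed inputs are already collected in the cited literature, so the proof reduces to citing the appropriate combination of results.
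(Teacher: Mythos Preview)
Your reduction from dlt to klt via the perturbation $(X,\Delta-\varepsilon\lfloor\Delta\rfloor)$ hides a circularity. For $(X,\Delta-\varepsilon\lfloor\Delta\rfloor)$ to be a log pair at all, you need $K_X+\Delta-\varepsilon\lfloor\Delta\rfloor$ to be $\mathbb{Q}$-Cartier; since $K_X+\Delta$ is $\mathbb{Q}$-Cartier by hypothesis, this amounts to $\lfloor\Delta\rfloor$ being $\mathbb{Q}$-Cartier. But that is precisely (a case of) the $\mathbb{Q}$-factoriality you are trying to establish. Components of $\lfloor\Delta\rfloor$ \emph{can} pass through singular points of $X$ in a dlt surface pair (e.g.\ an $A_1$ point with a general curve through it is dlt), so you cannot simply say the issue doesn't arise. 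Your sentence ``decreasing the coefficients \dots\ turns the discrepancies that were equal to $-1$ into values strictly greater than $-1$'' already presupposes that discrepancies of the perturbed pair are defined.

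The paper's argument runs in a different order and thereby sidesteps this: it invokes \cite[Proposition~2.28]{kk-singbook} to obtain directly that a dlt surface has rational singularities (that result is formulated via numerical discrepancies computed through the intersection form on a resolution, so no $\mathbb{Q}$-Cartier hypothesis on pieces of the boundary is needed), and then \cite[Proposition~10.9]{kk-singbook} for the implication ``rational surface singularity $\Rightarrow$ $\mathbb{Q}$-factorial''. Both references already work in the excellent setting with dualising complex, so the case split by characteristic you propose is unnecessary. Your approach can be salvaged by switching to numerical discrepancies on the resolution so that ``numerically klt'' makes sense for the perturbed boundary, but once you do that you are essentially reproving \cite[Proposition~2.28]{kk-singbook} and the argument collapses into the paper's.
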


\begin{proof}
	By \cite[Proposition 2.28]{kk-singbook}, dlt surface singularities are rational and rational surface singularities are $\mathbb{Q}$-factorial by \cite[Proposition 10.9]{kk-singbook}.
\end{proof}

For surfaces, the MMP for generalised pairs is immediate from the usual MMP as the moduli part $\Bf{M}$ is nef on every model.

\begin{proposition}\label{g-MMP-surfaces}
	Let $(X,B+\Bf{M})$ be a generalised dlt projective surface over $T$.
	Then we can run a $(K_X+B+\Bf{M}_X)$-MMP over $T$ which terminates.
\end{proposition}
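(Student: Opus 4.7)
The plan is to execute the usual surface MMP with $K_X+B+\Bf{M}_X$ playing the role of the log canonical divisor. By \autoref{l-Q-fact} the dlt surface $(X,B)$ is $\bQ$-factorial with rational singularities, so $N^1(X/T)$ is a finite-dimensional real vector space and the Mori cone $\overline{NE}(X/T)$ is well-defined. The slogan ``$\Bf{M}$ is nef on every model'' amounts to the statement that on any birational model $Y$ on which $\Bf{M}$ descends, $\Bf{M}_Y$ is nef; this will be the only input beyond the usual MMP needed to identify the relevant extremal rays and to preserve generalised dlt.

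If $K_X+B+\Bf{M}_X$ is nef over $T$, the MMP has trivially terminated. Otherwise, one extracts a $(K_X+B+\Bf{M}_X)$-negative extremal ray $R$ in $\overline{NE}(X/T)$; since $X$ is a projective surface, $R$ is spanned by the class of an irreducible curve $C$ with $C^2 \leq 0$. If $C^2 = 0$, then $R$ defines a Mori fibration over $T$ and the MMP ends. If $C^2 < 0$, one contracts $C$ by the standard contractibility of negative curves on normal projective surfaces over an excellent base, obtaining a divisorial contraction $f \colon X \to X'$ to a normal projective surface over $T$.

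Next one checks that the resulting pair $(X', f_*B + \Bf{M})$ is again a projective generalised dlt surface over $T$. Here $\Bf{M}_{X'} := f_* \Bf{M}_X$ is $\bQ$-Cartier because $X'$ is again $\bQ$-factorial (dlt, by \autoref{l-Q-fact}), and generalised discrepancies can only increase under a $(K_X+B+\Bf{M}_X)$-negative divisorial contraction, so the generalised dlt condition is preserved. Since $\rho(X'/T) = \rho(X/T) - 1$, iteration terminates in finitely many steps. The only mildly non-routine point I expect is the verification that generalised dlt descends along $f$; this follows because on any higher birational model the trace of $\Bf{M}$ agrees with the pullback of its nef descent, so the generalised discrepancy computation reduces to the usual discrepancy computation on a model where $\Bf{M}$ is genuinely nef.
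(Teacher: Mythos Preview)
Your argument has a real gap. You interpret the phrase ``$\Bf{M}$ is nef on every model'' as a restatement of b-nefness, i.e.\ that $\Bf{M}_Y$ is nef on any model $Y\to X$ on which $\Bf{M}$ descends. But that is just the definition; the substantive (and surface-specific) fact needed---and the one the paper actually proves---is that the trace $\Bf{M}_X$ is nef \emph{on $X$ itself}, even when $\Bf{M}$ does not descend to $X$. This is a one-line projection-formula computation: for any curve $C\subset X$ and any higher model $\pi\colon Y\to X$ with $\Bf{M}_Y$ nef, one has $\Bf{M}_X\cdot C=\pi_*\Bf{M}_Y\cdot C=\Bf{M}_Y\cdot\pi^*C\ge 0$.

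Without this, two of your steps are unjustified. First, your claim that a $(K_X+B+\Bf{M}_X)$-negative extremal ray is spanned by an irreducible curve with $C^2\le 0$ implicitly appeals to the cone theorem, which is proven for $K_X+B$, not for an arbitrary $\bQ$-divisor; the bridge is precisely that $\Bf{M}_X$ nef forces any $(K_X+B+\Bf{M}_X)$-negative class to be $(K_X+B)$-negative. Second, your assertion that $(X',f_*B)$ is again dlt (and hence $\bQ$-factorial via \autoref{l-Q-fact}) presupposes that the contraction is a $(K_X+B)$-negative step, which again needs $\Bf{M}_X\cdot C\ge 0$. Once you insert the nefness argument, both issues dissolve and your proof collapses to the paper's: every step of the $(K_X+B+\Bf{M}_X)$-MMP is already a step of the ordinary $(K_X+B)$-MMP, whose existence and termination are supplied by \cite{Tan18}. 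The separate verifications you sketch---contractibility of negative curves, descent of generalised dlt, termination via Picard-rank drop---then become unnecessary.
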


\begin{proof}
As $X$ is $\mathbb{Q}$-factorial, $\Bf{M}_X$ is a $\mathbb{Q}$-Cartier $\mathbb{Q}$-divisor.
We start by proving that $\Bf{M}_X$ is nef. 
If $\pi \colon Y \to X$ is a proper birational morphism on which $\Bf{M}_Y$ is nef, then by projection formula we have that for every curve $C \subset X$,
$$ \Bf{M}_X \cdot C =\pi_*\Bf{M}_Y \cdot C= \Bf{M}_Y \cdot \pi^* C \geq 0. $$
This shows $\Bf{M}_X$ is nef.
As $\Bf{M}_X$ is a nef, then a step of a $(K_X+B+\Bf{M}_X)$-MMP is thus also a step of a $(K_X+B)$-MMP, which we know exists and terminates by \cite{Tan18}. 
\end{proof}

\subsection{Birational geometry of klt Calabi--Yau pairs}

Let $S$ be a quasi-projective normal integral scheme over $R$.
We study birational properties of Calabi--Yau pairs over $S$.
	\begin{definition}
 	We say $(X,\Delta)$ is a \emph{klt Calabi--Yau pair} (abbreviated as klt CY pair) over $S$ if $(X, \Delta)$ is a klt pair, proper over $S$ such that $K_X+\Delta \equiv 0$ over $S$.	
 	\end{definition}
	If $S$ is clear from the context, we will simply omit it.
	We discuss some results on the birational geometry of klt CY pairs.
	
	\begin{lemma}\label{l-image-CY}
		Let $(X,\Delta)$ be a klt CY pair over $S$.
		Let $\pi \colon X \to Y$ be a proper birational contraction between normal proper $S$-schemes.
		Then $(Y, \pi_*\Delta)$ is crepant birational to $(X, \Delta)$. In particular, $(Y, \pi_*\Delta)$ is a klt CY pair over $S$.
	\end{lemma}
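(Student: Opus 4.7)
The plan is to reduce the statement to the assertion that $\pi$ is \emph{crepant}, i.e.\ that $K_Y+\pi_*\Delta$ is $\mathbb{Q}$-Cartier and $\pi^*(K_Y+\pi_*\Delta)=K_X+\Delta$. Once crepancy is known, everything else is automatic: for every divisor $E$ appearing on a birational model the identity $a(E,Y,\pi_*\Delta)=a(E,X,\Delta)>-1$ shows that $(Y,\pi_*\Delta)$ is klt; pushing forward $K_X+\Delta\equiv 0$ over $S$ yields $K_Y+\pi_*\Delta\equiv 0$ over $S$; and a common log resolution of $(X,\Delta)$ and $(Y,\pi_*\Delta)$ witnesses the crepant birationality.

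First I would observe that $K_X+\Delta\equiv 0/S$ forces $K_X+\Delta$ to be $\pi$-numerically trivial, because every $\pi$-contracted curve is proper over a closed point of $S$ and thus has zero intersection with $K_X+\Delta$. Once one knows that $K_Y+\pi_*\Delta$ is $\mathbb{Q}$-Cartier, picking $m\geq 1$ so that $m(K_X+\Delta)$ and $m(K_Y+\pi_*\Delta)$ are both Cartier, the divisor
\[
E := m(K_X+\Delta) - \pi^*\bigl(m(K_Y+\pi_*\Delta)\bigr)
\]
is $\pi$-exceptional, $\mathbb{Q}$-Cartier, and $\pi$-numerically trivial. The negativity lemma applied to both $E$ and $-E$ (each is $\pi$-nef since $E\equiv_\pi 0$) gives $E\geq 0$ and $E\leq 0$, hence $E=0$, which is the crepancy equality.

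The hard part of the argument is therefore the $\mathbb{Q}$-Cartierness of $K_Y+\pi_*\Delta$. The route I would take is to fix a log resolution $f\colon W\to X$, set $K_W+\Delta_W = f^*(K_X+\Delta)$ so that $(W,\Delta_W)$ is sub-klt with $K_W+\Delta_W\equiv 0$ over $Y$, and run a relative $(K_W+\Delta_W)$-MMP over $Y$. Every contracted extremal ray is automatically $(K_W+\Delta_W)$-trivial, so each step is crepant and numerical triviality is preserved; termination (which is unconditional in the surface setting driving this paper by \cite{Tan18}) produces a minimal model that must coincide with $Y$, and $\mathbb{Q}$-Cartierness of $K_Y+\pi_*\Delta$ is read off from the resulting chain of crepant steps. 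In the surface case one can even bypass the MMP and argue directly, using that $X$ is $\mathbb{Q}$-factorial by \autoref{l-Q-fact} and that the intersection matrix of $\pi$-exceptional curves on a resolution of a normal surface is negative definite, to descend $\mathcal{O}_X(m(K_X+\Delta))$ to $Y$ and then invoke the negativity lemma as above.
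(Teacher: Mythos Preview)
Your core strategy---push forward to get $K_Y+\pi_*\Delta\equiv 0$, then invoke the negativity lemma applied to both $\pm\bigl((K_X+\Delta)-\pi^*(K_Y+\pi_*\Delta)\bigr)$ to force crepancy---is exactly what the paper does. Its entire proof is those two observations in two sentences, citing \cite[Lemma~2.14]{bhatt2020}; it does not pause on the $\mathbb{Q}$-Cartierness of $K_Y+\pi_*\Delta$ at all.

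Your additional care about $\mathbb{Q}$-Cartierness is reasonable, and your surface-specific route (via $\mathbb{Q}$-factoriality from \autoref{l-Q-fact} and negative-definiteness of the exceptional intersection matrix) is correct and suffices for every use of the lemma in the paper. Your general MMP argument, however, does not work as written: since $K_W+\Delta_W\equiv 0$ over $Y$, there are no $(K_W+\Delta_W)$-negative extremal rays over $Y$, so a $(K_W+\Delta_W)$-MMP over $Y$ contracts nothing---$W$ is already its own minimal model and you never reach $Y$. To salvage that paragraph you would need to run a different MMP (for instance with scaling of an ample divisor, or on some auxiliary log divisor whose negative rays are automatically $(K_W+\Delta_W)$-trivial) and argue separately that the output is $Y$; alternatively, and more directly, observe that $K_X+\Delta$ is $\pi$-nef on the klt pair $(X,\Delta)$, hence $\pi$-semiample by the relative base-point-free theorem, so a multiple descends along the contraction $\pi$ and $\mathbb{Q}$-Cartierness follows.
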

	
	\begin{proof}
		As $K_X+\Delta \equiv 0$, then clearly $K_Y+\pi_*\Delta \equiv 0$. Then by negativity lemma \cite[Lemma 2.14]{bhatt2020}, we conclude $K_X+\Delta \sim_{\mathbb{Q}} \pi^*(K_Y+\pi_*\Delta)$.
	\end{proof}
	
	\begin{lemma}\label{l-rat-CY}
		Let $k$ be an algebraically closed field.
		Let $X$ be a projective surface over $k$ such that $K_X \equiv 0$.
		Suppose that
		\begin{enumerate}
			\item if $k = \overline{\mathbb{F}}_p$, $X$ is klt;
			\item if $k \neq \overline{\mathbb{F}}_p$, $X$ is $\bQ$-factorial (e.g. $X$ is klt).
		\end{enumerate}
		If the singularities of $X$ are worse than canonical, then $X$ is birational to $\mathbb{P}^{2}_{k}$.
	\end{lemma}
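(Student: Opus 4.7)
My strategy is to produce a non-pseudo-effective canonical class on a smooth model of $X$, run an MMP to reach a Mori fibre space, and then show that this Mori fibre space is rational.

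I begin by taking the minimal resolution $f \colon Y \to X$, which exists since $X$ is $\mathbb{Q}$-factorial under either hypothesis. Writing $K_Y = f^* K_X - D$ with $D := -\sum a(E_i, X, 0) E_i$, the negativity lemma together with minimality of the resolution shows that $D$ is an effective $\mathbb{Q}$-divisor supported on the $f$-exceptional locus. Since the singularities are worse than canonical, at least one discrepancy is strictly negative, so $D \ne 0$. Combined with $K_X \equiv 0$, this gives $K_Y \equiv -D$, and intersecting with any ample class shows that $K_Y$ is not pseudo-effective on $Y$.

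Next, I run a $K_Y$-MMP on the smooth surface $Y$; this exists and terminates over any field by Tanaka's surface MMP, and since $K_Y$ is not pseudo-effective it must end at a Mori fibre space $g \colon Y' \to Z$. Pushforward preserves numerical equivalence, so the pushforward $D'$ of $D$ satisfies $K_{Y'} + D' \equiv 0$; moreover $D' \ne 0$, as otherwise $K_{Y'} \equiv 0$ would contradict $Y'$ being a Mori fibre space. If $\dim Z = 0$, then $Y'$ is a smooth projective surface of Picard rank one with $-K_{Y'}$ ample, forcing $Y' \cong \mathbb{P}^2_k$; hence $X$ is birational to $\mathbb{P}^2_k$.

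The remaining case is $\dim Z = 1$, where $g$ is a $\mathbb{P}^1$-bundle over a smooth curve $Z$ and it suffices to show $Z \cong \mathbb{P}^1$. Restricting $K_{Y'} + D' \equiv 0$ to a general fibre $F \cong \mathbb{P}^1$ gives $\deg(D'|_F) = 2$, so $D'$ contains a horizontal irreducible component $H$. When $X$ is klt, \autoref{l-Q-fact} implies $X$ has rational singularities, so every exceptional component of $f \colon Y \to X$ has arithmetic genus zero; therefore $H$ (a pushforward of such a component) is a rational curve, and the induced dominant morphism $\mathbb{P}^1 \cong \widetilde{H} \to Z$ forces $Z \cong \mathbb{P}^1$, so $Y'$ is a Hirzebruch surface and $X$ is birational to $\mathbb{P}^2_k$. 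The main obstacle I anticipate is the residual case $k \ne \overline{\mathbb{F}}_p$ with $X$ only $\mathbb{Q}$-factorial (possibly non-klt), where exceptional components of the minimal resolution need not be rational; I expect this is handled by a separate rationality argument ruling out maps from $X$ to curves of positive genus, perhaps by exploiting an Albanese-type morphism for varieties with $K \equiv 0$ together with the hypothesis on the base field.
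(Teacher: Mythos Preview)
Your argument in the klt case is correct and shares the essential idea with the paper's proof: both hinge on the fact that the exceptional curves of the minimal resolution are rational. The paper packages this more compactly via contradiction --- if $Y$ were not rational it would fibre over a curve $B$ of genus $g(B)\geq 1$, the rational exceptional components of $E$ would then lie in fibres, and intersecting $K_Y+E\equiv 0$ with a general fibre $F$ yields $-2=(K_Y+E)\cdot F=0$ --- rather than explicitly running an MMP to a Mori fibre space, but the underlying mechanism is the same.

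The genuine gap is precisely where you flag it: case (b), $k\neq\overline{\mathbb{F}}_p$ with $X$ only $\mathbb{Q}$-factorial. Your speculation about an Albanese-type argument is not how the paper proceeds. The paper instead invokes \cite[Theorem 3.20]{Tan14}, which guarantees that for a normal $\mathbb{Q}$-factorial projective surface over an algebraically closed field $k\neq\overline{\mathbb{F}}_p$, the exceptional curves of the minimal resolution are rational. With this in hand, your own argument goes through verbatim: the horizontal component $H$ of $D'$ is the strict transform of an exceptional curve of $f$, hence rational, and the dominant map $H\to Z$ forces $Z\cong\mathbb{P}^1_k$. So the only missing ingredient is this one external input; once you cite it, your proof in case (b) is complete and parallel to the klt case.
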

	\begin{proof}
		Let $\pi \colon Y \to X$ be the minimal resolution. 
		We have $K_Y + E =\pi^*K_X=0,$
		where $E>0$.
		Suppose for contradiction that $Y$ is not a rational surface.
		Therefore there exists $f \colon Y \to B$ where $B$ is a curve of genus $g(B) \geq 1$.
		We now claim that all irreducible components of $E$ are rational curves.
		This is guaranteed by (a) and (b): the minimal resolution of a klt singularity is a tree of rational curves (see the classification in \cite[3.40]{kk-singbook}) and
		in the case where $k \neq \overline{\mathbb{F}}_p$ and $X$ is $\mathbb{Q}$-factorial we apply \cite[Theorem 3.20]{Tan14} to conclude.
		Therefore each irreducible components of $E$ must be contained in a fibre of $f$. 
		Let $F$ be a general fibre of $f$. As $E \cdot F=0$, by adjunction we know $(K_Y+E) \cdot F=K_Y \cdot F = \deg K_F=-2$, contradicting $K_Y+E \equiv 0$. 
	\end{proof}
	
We recall a straightforward application of the abundance theorem to the semiampleness conjecture.
We say that a $\mathbb{Q}$-Cartier $\mathbb{Q}$-divisor $D$ on $X$ is \emph{$\mathbb{Q}$-effective} if there exists $n>0$ such that $H^0(X, \mathcal{O}_X(nD)) \neq 0.$
	
	\begin{proposition}\label{l-abund-effective}
	Let $(X, \Delta)$ be a klt CY pair over $S$ and let $L$ be a nef $\mathbb{Q}$-Cartier $\mathbb{Q}$-divisor on $X$.
	Suppose that 
	\begin{enumerate}
		\item $\dim X=2$; or
		\item $\dim X=3$, the image of $X$ is positive dimensional and the residue field of any closed point is of characteristic $p>5$.
	\end{enumerate}
If $L$ is $\mathbb{Q}$-effective, then $L$ is semiample.
Moreover, if $L \equiv E$ over $S$ where $E$ is an effective $\mathbb{Q}$-divisor, then $L$ is num-semiample.

\end{proposition}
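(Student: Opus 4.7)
The plan is to reduce the statement to the abundance theorem by perturbing the boundary $\Delta$ using $L$.

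First suppose that $L$ is $\mathbb{Q}$-effective, so after rescaling we may choose an effective $\mathbb{Q}$-divisor $E$ with $L \sim_{\mathbb{Q}} E$ (note $E$ is $\mathbb{Q}$-Cartier since $L$ is). Since $(X,\Delta)$ is klt, the pair $(X, \Delta + \epsilon E)$ remains klt for every sufficiently small rational $\epsilon > 0$. Using that $K_X + \Delta \equiv 0$ over $S$ one computes
\[
K_X + \Delta + \epsilon E \sim_{\mathbb{Q}} \epsilon E \sim_{\mathbb{Q}} \epsilon L \quad \text{over } S,
\]
so $K_X + \Delta + \epsilon E$ is nef over $S$. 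Thus proving that $L$ is semiample reduces to the abundance conjecture for the klt pair $(X, \Delta + \epsilon E)$, which is exactly what is assumed to be available in each of the two cases.

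In case (1) the surface $(X, \Delta + \epsilon E)$ over an excellent base fits into the abundance theorem of \cite{tanaka2020abundance, Pos21}. In case (2), the positive-dimensional image of $X$ over $S$ together with the characteristic hypothesis $p > 5$ puts us in the scope of the fibred klt threefold MMP in positive and mixed characteristic developed in \cite{DW19, Zha19, Wit21, BBS}, from which abundance for $K_X + \Delta + \epsilon E$ follows. In either case, the resulting semiample $\mathbb{Q}$-divisor is $\mathbb{Q}$-linearly equivalent to a positive multiple of $L$, so $L$ is itself semiample.

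For the moreover statement, suppose instead only that $L \equiv E$ over $S$ for some effective $\mathbb{Q}$-divisor $E$. In case (1), $X$ is $\mathbb{Q}$-factorial by \autoref{l-Q-fact} so $E$ is automatically $\mathbb{Q}$-Cartier; in case (2) one may first pass to a small $\mathbb{Q}$-factorialisation to ensure the same. The identical perturbation argument then produces a klt pair $(X, \Delta + \epsilon E)$ with $K_X + \Delta + \epsilon E \equiv \epsilon L$ over $S$ and nef. Applying abundance as above yields a semiample $\mathbb{Q}$-divisor $L'$ with $L' \equiv \epsilon L$ over $S$, so that $L \equiv L'/\epsilon$ is num-semiample.

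The main obstacle is locating and combining the correct abundance results in case (2), since one must piece together the threefold abundance statements available relative to a positive-dimensional base under the precise characteristic hypothesis; the perturbation reduction itself is essentially formal, relying only on openness of the klt condition and the numerical triviality of $K_X+\Delta$.
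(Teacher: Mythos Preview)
Your proof is correct and takes essentially the same approach as the paper: perturb the boundary to $(X,\Delta+\varepsilon E)$, observe this stays klt with nef log canonical divisor, and invoke the abundance theorem for klt surfaces and threefolds (the paper cites \cite[Theorems 1.1, 3.1]{BBS} for both cases at once). Your handling of the ``moreover'' clause---including the $\mathbb{Q}$-factoriality remark to ensure $E$ is $\mathbb{Q}$-Cartier---is more explicit than the paper's one-line ``the last assertion now follows immediately'', but the underlying idea is identical.
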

\begin{proof}
	By assumption, there exists an effective $\bQ$-Cartier $\bQ$-divisor $E$ such that $L \sim_{\bQ} E$.
	If we consider $0< \varepsilon \ll 1$, the pair $(X, \Delta +\varepsilon E)$ is klt and $K_X+\Delta+\varepsilon E \sim_{\bQ} \varepsilon E$.
	By the abundance theorem for klt surfaces and threefolds (\cite[Theorems 1.1, 3.1]{BBS}) the divisor $E$, and thus $L$, is semiample.
	The last assertion now follows immediately.
\end{proof}

\section{Semiampleness for klt Calabi--Yau surfaces}

In this section we prove the semiampleness conjecture for klt CY surface pairs. 
Our principal tools are the MMP for excellent surfaces \cite{Tan18}, the classification of Bombieri-Mumford of smooth varieties with trivial canonical class over an algebraically closed field \cite{BM76} and the abundance theorem for surfaces \cite{tanaka2020abundance}.
To treat the case of imperfect fields we use the base change formula of \cite{Tan18b, PW22}.
The main difficulty in the proof lies in ruling out the existence of strictly nef divisors which are not ample on klt CY surfaces.

\subsection{Semiampleness for canonical $K$-trivial surfaces}
We fix $k$ to be a field of characteristic $p>0$.
We prove the semiampleness conjecture for $K$-trivial projective surfaces with canonical singularities over algebraically closed $k$.
The result is probably well-known to the experts but we include a proof for sake of completeness. 
We start with the following lemma on abelian varieties. 

\begin{lemma}\label{l-abelian-var}
	Let $k$ be an algebraically closed field and let $A$ be an abelian variety over $k$.
	If $L$ is a strictly nef line bundle on $A$, then $L$ is ample.
\end{lemma}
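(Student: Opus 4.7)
The plan is to exploit the polarization morphism $\phi_L \colon A \to A^{\vee} = \Pic^{0}(A)$, $a \mapsto t_a^* L \otimes L^{-1}$, which is a group homomorphism by the Theorem of the Square. Denote by $K(L)$ its kernel and by $B$ the reduced identity component of $K(L)$; then $B$ is an abelian subvariety of $A$, and by construction every translation by an element of $B(k)$ fixes the class of $L$, so $L|_B$ is algebraically trivial, and in particular numerically trivial.

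If $\dim B > 0$, then $B$ is a positive-dimensional projective variety and contains some integral curve $C$; hence $L \cdot C = (L|_B) \cdot C = 0$, contradicting the strict nefness of $L$. We conclude that $K(L)$ is finite and $\phi_L$ is an isogeny. Next, I would apply Riemann--Roch on the abelian variety $A$ of dimension $g$, which gives $\chi(A,L) = L^g/g!$ together with Mumford's formula $\chi(A,L)^2 = \deg \phi_L > 0$; combining with nefness of $L$ (which forces $L^g \geq 0$) this yields $L^g > 0$, so $L$ is big.

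The final step, which I expect to be the main technical input, is to upgrade ``nef, big, with $K(L)$ finite'' to ampleness on the abelian variety $A$. This is a classical application of Mumford's Index Theorem: any line bundle $M$ with finite $K(M)$ has a unique non-vanishing cohomology group $H^{i}(A, M)$, and since $\chi(L) > 0$ the index $i$ of $L$ is necessarily even. Showing $i = 0$ (so that $L$ acquires a global section and Mumford's ampleness criterion applies) can be achieved by passing to a sufficiently large multiple $L^{\otimes N}$, for which bigness guarantees $H^0(A, L^{\otimes N}) \neq 0$ while $\phi_{L^{\otimes N}} = N \phi_L$ remains an isogeny, and then invoking Mumford's ampleness criterion for line bundles on abelian varieties with finite kernel subgroup and a non-zero global section.
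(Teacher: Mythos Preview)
Your argument is correct and essentially reconstructs the proof the paper cites: the paper's own proof is a one-line reference to Serrano's Proposition~1.4, noting that the argument there goes through over any algebraically closed field, and your proposal supplies precisely those details via Mumford's theory of $\phi_L$, $K(L)$, Riemann--Roch, and the ampleness criterion. One minor streamlining: once you know $K(L)$ is finite and $L^g>0$, you can skip the parity remark about the index and go straight to the multiple $L^{\otimes N}$ with a section, since Mumford's criterion (finite $K(L^{\otimes N})$ together with $H^0\neq 0$) already yields ampleness of $L^{\otimes N}$, hence of $L$.
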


\begin{proof}
	The proof of \cite[Proposition 1.4]{Ser95} works over any algebraically closed field of arbitrary characteristic.
\end{proof}

\begin{proposition}\label{l-not-strictly-nef}
	Let $k$ be an algebraically closed field of characteristic $p>0$ and let $X$ be a smooth projective surface over $k$ such that $K_X \equiv 0$.
	If $L$ is a strictly nef Cartier divisor on $X$, then $L$ is ample.
\end{proposition}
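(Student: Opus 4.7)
The approach will be to combine Nakai--Moishezon with the Bombieri--Mumford classification of smooth projective surfaces with numerically trivial canonical class in positive characteristic \cite{BM76}. Since $L$ is nef we have $L^{2}\geq 0$, and if $L^{2}>0$ then Nakai--Moishezon directly gives ampleness, so I will aim for a contradiction under the assumption $L^{2}=0$. The classification then reduces us to the cases where $X$ is abelian, K3, Enriques, hyperelliptic, or quasi-hyperelliptic.

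For the K3 and Enriques cases the plan is a Riemann--Roch argument: using $K_{X}\equiv 0$ and $L^{2}=0$ one gets $\chi(L)=\chi(\mathcal{O}_{X})\in\{1,2\}$. The key observation is that $K_{X}-L\equiv -L$ has strictly negative intersection with any ample divisor and hence cannot be effective, so Serre duality yields $h^{2}(L)=0$ and therefore $h^{0}(L)\geq \chi(\mathcal{O}_{X})>0$. Writing $L\sim\sum_{i}a_{i}C_{i}$ with $a_{i}>0$, strict nefness produces the contradiction $L^{2}=\sum_{i}a_{i}(L\cdot C_{i})>0$. For the abelian case, \autoref{l-abelian-var} directly yields ampleness of $L$. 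For the hyperelliptic case, I would pass to the étale cover $\pi\colon A\to X$ by an abelian surface furnished by Bombieri--Mumford; the projection formula makes $\pi^{*}L$ strictly nef on $A$ (since $\pi^{*}L\cdot C=L\cdot \pi_{*}C>0$ for every integral curve $C\subset A$), hence ample by \autoref{l-abelian-var}, and ampleness descends along the finite surjective $\pi$ to yield $L$ ample.

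The main obstacle is the quasi-hyperelliptic case, which occurs only in characteristics $2$ and $3$. Here $b_{2}(X)=2$, so $\rho(X)\leq 2$; strict nefness combined with $L^{2}=0$ rules out the case $\rho=1$ (in which $L$ would be numerically proportional to an ample class and hence satisfy $L^{2}>0$), so $\rho(X)=2$. The intersection form then has signature $(1,1)$ by Hodge index, and the nef cone is a two-dimensional cone bounded by two rational isotropic rays. The plan is to invoke the explicit Bombieri--Mumford structure theorem to identify both extremal rays as classes of integral fibers $F_{i}$ of the two natural (quasi-)elliptic fibrations of $X$; since $L^{2}=0$ the class of $L$ is proportional to some $F_{i}$, and $L\equiv cF_{i}$ with $c>0$ yields $L\cdot F_{i}=cF_{i}^{2}=0$, contradicting strict nefness. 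Verifying that both extremal rays are represented by integral curves is the delicate point and is what distinguishes this case from the hyperelliptic one; this is where I would rely on the detailed Bombieri--Mumford classification of quasi-hyperelliptic surfaces.
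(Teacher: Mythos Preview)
Your approach matches the paper's almost exactly: case split via Bombieri--Mumford, Riemann--Roch plus Serre duality for the $\chi(\mathcal{O}_X)>0$ cases (K3, Enriques), \autoref{l-abelian-var} for abelian surfaces, and passage to the abelian \'etale cover for hyperelliptic surfaces. The one place where you diverge, and where your argument is left incomplete, is the quasi-hyperelliptic case.

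There you want to show that both extremal rays of the nef cone of $X$ are spanned by integral fibres of ``the two natural (quasi-)elliptic fibrations of $X$'', but you do not verify that a second fibration on $X$ actually exists, and this is not immediate from the classification (you rightly flag it as the delicate point). The paper bypasses this issue entirely: by \cite[Theorem~1]{BM76} there is a finite morphism $f\colon E\times\mathbb{P}^{1}\to X$ with $E$ elliptic, and one argues on the source rather than on $X$. The projection formula makes $f^{*}L$ strictly nef, and since $\overline{\mathrm{NE}}(E\times\mathbb{P}^{1})=\mathrm{NE}(E\times\mathbb{P}^{1})$ (the cone of curves is already closed, being spanned by the two fibre classes), Kleiman's criterion gives $f^{*}L$ ample, hence $L$ ample. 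This is shorter and avoids any need to produce fibrations or identify extremal rays on $X$ itself; in effect the closed Mori cone you are looking for is already visible on the cover.
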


\begin{proof}
	By the Bombieri-Mumford classification (\cite[page 1]{BM76}), we have $\chi(X, \MO_X) \geq 0$.
	If $\chi(X, \MO_X)>0$,  then we conclude $h^0(X,L)>0$ by the Riemann-Roch theorem. Thus $L^2>0$ and so $L$ is ample by the Nakai-Moishezon criterion.
	
	If $\chi(X, \MO_X)=0$, then by \cite{BM76, BM77} $X$ is either an abelian surface, a hyperelliptic surface or a quasi-hyperelliptic surface.
	If $X$ is hyperelliptic, we consider a finite \'etale cover $f \colon Y \to X$ where $Y$ is an abelian variety.
	Then $f^*L$ is strictly nef and we conclude that $f^*L$, and thus $L$, is ample by \autoref{l-abelian-var}.
	If $X$ is quasi-hyperelliptic, by \cite[Theorem 1]{BM76} there exists a finite morphism $f \colon E \times \mathbb{P}^1 \to X$, where $E$ is an elliptic curve.
	Since $f^*L$ is strictly nef and $\overline{\text{NE}}(E \times \mathbb{P}^1)=\textup{NE}(E \times \mathbb{P}^1)$, we conclude by Kleimann's criterion (see \cite[Theorem 1.4.29]{Laz04}) that $f^*L$, and thus $L$, is ample.
\end{proof}

\begin{lemma}\label{l-C-trivial}
    Let $X$ be a normal projective surface over a field $k$. Suppose that $L$ is a nef line bundle with $\nu(L)=1$ and $C$ a curve with $L\cdot C=0$. 
    Then $C^{2}\leq 0$. 
    Moreover, $C^2=0$ if and only if $L\equiv tC$ for some $t>0$.
\end{lemma}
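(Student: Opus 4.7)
The plan is to deduce both conclusions from the Hodge index theorem applied to the intersection form on $N^{1}(X)_{\bR}$, which has signature $(1,\rho-1)$ where $\rho=\dim_{\bR} N^{1}(X)_{\bR}$.

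First I note that $\nu(L)=1$ forces $L^{2}=0$ while $L\not\equiv 0$ in $N^{1}(X)_{\bR}$. I then restrict the intersection form to the subspace $V=\bR\langle L, C\rangle$; using $L^{2}=0=L\cdot C$, its Gram matrix in the basis $(L,C)$ is $\bigl(\begin{smallmatrix} 0 & 0 \\ 0 & C^{2}\end{smallmatrix}\bigr)$.

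I then split into two cases according to whether $L$ and $C$ are numerically proportional. In the proportional case $C\equiv\lambda L$: pairing with an ample divisor $H$ gives $L\cdot H>0$ (since $L$ is nef and non-trivial in $N^{1}$) and $C\cdot H>0$ (since $C$ is a curve), so $\lambda>0$; hence $L\equiv tC$ with $t=1/\lambda>0$ and $C^{2}=\lambda^{2}L^{2}=0$, so both conclusions of the lemma hold trivially. In the non-proportional case $\dim V=2$: if $C^{2}>0$, then Hodge index tells us that the form on $C^{\perp}$ is negative definite, contradicting the existence of $L\in C^{\perp}$ nonzero with $L^{2}=0$; if instead $C^{2}=0$, then $V$ would be a $2$-dimensional totally isotropic subspace of a form of signature $(1,\rho-1)$, impossible since such subspaces have dimension at most $\min(1,\rho-1)\le 1$. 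Hence the non-proportional case forces $C^{2}<0$ strictly.

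Combining the two cases yields $C^{2}\le 0$ in general, with $C^{2}=0$ occurring only in the proportional case, where we have already shown $L\equiv tC$ for some $t>0$; conversely, if $L\equiv tC$ with $t>0$ then $0=L^{2}=t^{2}C^{2}$ gives $C^{2}=0$. The only technical care required is applying Hodge index on a normal (possibly singular) projective surface, but this is classical via Mumford's intersection theory for Weil divisors, so I do not expect any substantive obstacle in carrying out the argument.
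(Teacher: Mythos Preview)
Your argument is correct. Both you and the paper ultimately rest on the Hodge index theorem, and for the case $C^{2}=0$ the two proofs are essentially identical: Hodge index forces $L$ and $C$ to be numerically proportional, and pairing with an ample class fixes the sign of the proportionality constant.

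The difference lies in the case $C^{2}>0$. The paper argues geometrically: if $C^{2}>0$ then $C$ is big, hence $C\sim_{\mathbb{Q}}A+E$ with $A$ ample and $E\ge 0$, and then $L\cdot C\ge L\cdot A>0$ because a nef non-numerically-trivial divisor pairs positively with any ample class, contradicting $L\cdot C=0$. You instead stay entirely within the linear algebra of the intersection form: $C^{2}>0$ makes $C^{\perp}$ negative definite, so $L\in C^{\perp}\setminus\{0\}$ with $L^{2}=0$ is impossible. Your treatment is more uniform (one tool throughout) and slightly shorter; the paper's version has the mild advantage of not needing to invoke the full signature statement on a possibly singular surface, only the weaker fact that a nef non-trivial class has positive degree against an ample, though as you note Mumford's intersection theory handles the signature without trouble.
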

\begin{proof}
If $C^{2}>0$ then $C$ is big and we write $C\sim_{\mathbb{Q}} A+E$ for $A$ ample and $E \geq 0$. Then $L\cdot C=L \cdot (A+E)\geq L \cdot A > 0$ since $L$ is nef, and hence pseudo-effective, but not numerically trivial.
If $C^2=0$ then we conclude that $L\equiv tC$ by the Hodge index theorem. As $C$ is effective and $\nu(L)=1$, then $t>0$.
\end{proof}

\begin{proposition}\label{p-abundance-smoothK}
	Let $k$ be an algebraically closed field of characteristic $p>0$ and let $X$ be a projective surface over $k$ with canonical singularities such that $K_X \equiv 0$.
	If $L$ is a nef Cartier divisor, then it is num-semiample.
\end{proposition}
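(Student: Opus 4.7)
The strategy is to reduce to showing that $L$ is numerically equivalent to an effective $\mathbb{Q}$-divisor on $X$: since $(X,0)$ is a klt Calabi--Yau pair, \autoref{l-abund-effective} will then conclude. We split according to the numerical dimension $\nu(L)\in\{0,1,2\}$. If $\nu(L)=0$ then $L\equiv 0$ and the statement is trivial; if $\nu(L)=2$ then $L^{2}>0$, so $L$ is big and in particular $\mathbb{Q}$-effective, and we conclude immediately. The substantive case is $\nu(L)=1$, where $L^{2}=0$.

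Assume $\nu(L)=1$ and let $\pi \colon Y \to X$ be the minimal resolution. Since $X$ has canonical (hence Du Val) singularities, $\pi$ is crepant, so $Y$ is smooth with $K_{Y}\equiv 0$ and $\pi^{*}L$ is nef with $(\pi^{*}L)^{2}=0$. It suffices to exhibit an effective $\mathbb{Q}$-divisor $D$ on $Y$ with $\pi^{*}L\equiv D$: then $L\equiv \pi_{*}D$ is a non-zero effective $\mathbb{Q}$-divisor on $X$ (non-zero because $L\not\equiv 0$ forces $D$ to have a non-$\pi$-exceptional component), and \autoref{l-abund-effective} concludes. By the Bombieri--Mumford classification, $Y$ is K3, Enriques, abelian, hyperelliptic, or quasi-hyperelliptic, and we split according to whether $\chi(\MO_{Y})>0$ or $\chi(\MO_{Y})=0$.

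If $\chi(\MO_{Y})>0$ (the K3 or Enriques cases), Riemann--Roch on the smooth surface $Y$ yields $\chi(Y,m\pi^{*}L) = \chi(\MO_{Y}) + \tfrac{m^{2}}{2}(\pi^{*}L)^{2} - \tfrac{m}{2}\pi^{*}L\cdot K_{Y} = \chi(\MO_{Y}) > 0$; Serre duality combined with the non-pseudo-effectivity of $-m\pi^{*}L$ (a consequence of $\pi^{*}L$ being nef and not numerically trivial) forces $h^{2}(Y,m\pi^{*}L)=0$, hence $h^{0}>0$ and $\pi^{*}L$ is $\mathbb{Q}$-effective. If $\chi(\MO_{Y})=0$, we mimic the argument of \autoref{l-not-strictly-nef}: $\pi^{*}L$ cannot be strictly nef, because \autoref{l-abelian-var} in the abelian case, or a pullback along the natural finite cover by an abelian surface or $E\times\mathbb{P}^{1}$ in the (quasi-)hyperelliptic cases, combined with the failure of ampleness from $(\pi^{*}L)^{2}=0$, would give a contradiction. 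Thus some curve $C\subset Y$ satisfies $\pi^{*}L\cdot C=0$. On each of these three surface types, effective divisors are nef (via translation on abelian surfaces, or by pulling back to the cover), so $C^{2}\geq 0$; combined with \autoref{l-C-trivial} this forces $C^{2}=0$ and $\pi^{*}L\equiv tC$ for some $t>0$, as required. The main obstacle is precisely this $\chi(\MO_{Y})=0$ sub-case, where Riemann--Roch is uninformative and one must exploit the fibration and translation structure provided by the Bombieri--Mumford classification, including the genuinely positive-characteristic quasi-hyperelliptic class.
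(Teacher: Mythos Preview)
Your argument is correct and close in spirit to the paper's, but the organisation differs in one meaningful way. Both proofs pass to the minimal resolution $Y$ and split on $\nu(L)$; both ultimately rest on the Bombieri--Mumford classification and Riemann--Roch. The paper, however, packages the classification into \autoref{l-not-strictly-nef} (strictly nef $\Rightarrow$ ample on smooth $K$-trivial surfaces) and then runs a uniform loop: find a curve $C$ with $L\cdot C=0$; if $C^{2}<0$, contract it via a $(K_X+\varepsilon C)$-MMP step and repeat on a canonical CY surface of smaller Picard rank, until one lands on a curve with $C^{2}=0$. You instead split by $\chi(\mathcal O_Y)$: for $\chi>0$ you go straight to Riemann--Roch to produce a section (bypassing the search for an $L$-trivial curve entirely), and for $\chi=0$ you make the additional observation that abelian, hyperelliptic and quasi-hyperelliptic surfaces carry no curves of negative self-intersection (via translation or pull-back to the finite cover by $A$ or $E\times\bP^{1}$), so the first $L$-trivial curve you find already has $C^{2}=0$ by \autoref{l-C-trivial}. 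This lets you avoid the MMP contraction loop altogether.

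What each buys: the paper's route is more uniform and reuses \autoref{l-not-strictly-nef} as a black box, at the price of an inductive contraction argument. Your route is more hands-on but terminates in one step in every case; the extra ingredient is the (correct and easy) fact that effective divisors are nef on the three $\chi=0$ types. One small remark: in the $\nu(L)=2$ case the paper invokes the base-point-free theorem directly rather than going through \autoref{l-abund-effective}, but your route via $\mathbb Q$-effectivity is equally valid.
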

\begin{proof}
	By passing to the minimal resolution and the base point-free-theorem \cite[Proposition 2.1.a]{Ber21}, it is sufficient to discuss the case of smooth surfaces with numerically trivial canonical class.
	If $\nu(L)=0$, the claim is obvious and if $\nu(L)=2$ we conclude by the base-point-free theorem \cite[Theorem 4.2]{Tan18}.
	
	We now suppose that $\nu(L)=1$.
	As $L$ is not strictly nef by \autoref{l-not-strictly-nef}, then there exists a curve $C$ such that $L \cdot C =0$. By \autoref{l-C-trivial}, either $C^{2} =0$ and $L\equiv tC$ for some $t > 0$ or $C^{2} <0.$ 
	Take $\varepsilon >0$ with $(X,\varepsilon C)$ klt. If $C^{2}=0$ we conclude that $K_{X}+\varepsilon C\equiv \varepsilon C$ is semiample by \autoref{l-abund-effective}, and hence $L$ is num-semiample. Otherwise $C^{2} <0$ then we can contract $C$ as step of $(K_{X}+\varepsilon C)$-MMP which is $L$-trivial. 
	We thus reduce to Calabi-Yau surface with canonical singularities by \autoref{l-image-CY} with smaller Picard rank. After finitely many steps, as $L$ is not strictly nef by \autoref{l-not-strictly-nef}, there is a curve $C$ with $C^2=0$ and we conclude.
\end{proof}

\subsection{Klt CY surfaces}
We now prove the semiampleness conjecture for klt CY surface pairs $(X,\Delta)$ over an arbitrary field $k$.
We start by discussing the case where the boundary divisor $\Delta$ is empty.

\begin{proposition} \label{p-babycase}
	Let $k$ be a field of characteristic $p>0$ and let $X$ be a klt projective surface over $k$ such that $K_X \equiv 0$.
	Let $L$ be a nef Cartier divisor on $X$ with $\nu(L)=1$. Then $L$ is num-semiample.
\end{proposition}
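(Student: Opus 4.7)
The plan is to first reduce to the case where $k$ is algebraically closed, and then split according to whether $X$ has canonical singularities or strictly worse. For the reduction, I would pass to $k^{\text{sep}}$ (harmless, since separable base change preserves klt CY surfaces and numerical semiampleness descends by Galois) and then to $\overline{k}$, invoking the base change machinery of \cite{Tan18b, PW22} to handle the remaining purely inseparable extension. This reduction is the principal technical obstacle, because $X_{\overline{k}}$ may fail to be normal over an imperfect $k$. Once $k = \overline{k}$, the surface $X$ is $\mathbb{Q}$-factorial by \autoref{l-Q-fact}, so I would form the minimal resolution $\pi \colon Y \to X$, writing $K_Y + E = \pi^* K_X \equiv 0$ where $E \geq 0$ is an effective $\mathbb{Q}$-divisor supported on the exceptional locus (with coefficients in $[0,1)$, by klt).

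If $X$ has canonical singularities then $E = 0$, so $Y$ is a smooth projective surface with $K_Y \equiv 0$ and $\pi^* L$ is a nef Cartier divisor with $\nu(\pi^* L) = 1$. I would then apply \autoref{p-abundance-smoothK} to conclude that $\pi^* L$ is num-semiample, and descend the resulting contraction to $X$: any $\pi$-exceptional curve $C$ satisfies $\pi^* L \cdot C = 0$ by projection formula, so $C$ is contracted by the morphism induced by $\pi^* L$, which by rigidity factors through $\pi$, giving the desired num-semiample structure on $L$.

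If instead $X$ has a singularity worse than canonical, then $X$ is birational to $\mathbb{P}^2$ by \autoref{l-rat-CY} (whose hypotheses are met since $X$ is $\mathbb{Q}$-factorial), so $Y$ is a smooth rational surface with $\chi(Y, \mathcal{O}_Y) = 1$. Using $(\pi^* L)^2 = L^2 = 0$ and $\pi^* L \cdot K_Y = \pi^* L \cdot \pi^* K_X - \pi^* L \cdot E = 0$ (by projection formula, since $E$ is $\pi$-exceptional and $K_X \equiv 0$), Riemann--Roch on the smooth surface $Y$ gives $\chi(Y, m\pi^* L) = 1$ for every $m \geq 0$. Serre duality yields $h^2(Y, m \pi^* L) = h^0(Y, K_Y - m\pi^* L)$, which vanishes for $m \gg 0$: indeed $\pi^* L$ is nef and numerically non-trivial, so intersecting it with any ample $A$ on $Y$ gives $A \cdot \pi^* L > 0$ (as a nef divisor with zero intersection against an ample class is numerically trivial by Hodge index), forcing $A \cdot (K_Y - m \pi^* L) < 0$ for large $m$ and thus non-effectivity of $K_Y - m \pi^* L$. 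Therefore $h^0(Y, m\pi^* L) \geq 1$ for $m \gg 0$, so $\pi^* L$, and hence $L$, is $\mathbb{Q}$-effective, and the conclusion follows from \autoref{l-abund-effective} applied to the klt CY pair $(X, 0)$.
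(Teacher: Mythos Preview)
Your argument over an algebraically closed field is essentially correct and parallels the paper's treatment of the geometrically normal case. The gap is in the reduction step: you cannot simply ``reduce to $k=\overline{k}$'' at the outset.

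The references \cite{Tan18b, PW22} give you the conductor formula $K_Y + E \sim f^*K_X$ for the normalised reduced base change $f\colon Y \to X$, but they do not give descent of num-semiampleness. When $X$ is not geometrically normal one has $E>0$, so $Y$ is no longer a klt surface with $K_Y\equiv 0$; it has $K_Y \equiv -E$ with nonzero boundary. Thus your proposition's hypotheses fail on $Y$, and your Cases A and B do not apply to it. Even if you established num-semiampleness of $f^*L$ on $Y$, you give no mechanism to descend the resulting contraction through the finite (generally non-flat) map $f$; unlike ampleness or semiampleness, num-semiampleness does not obviously descend here. A similar issue appears in the geometrically normal case: $X_{\overline{k}}$ need not be klt (only $\mathbb{Q}$-factorial when $\overline{k}\neq\overline{\mathbb{F}_p}$), so the hypotheses you assume ``once $k=\overline{k}$'' are not inherited by base change.

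The paper's organisation is what makes the base-change machinery usable. It first works over the original $k$: if $L$ is not strictly nef, one contracts $L$-trivial curves with $C^2<0$ and, once a curve with $L\cdot C=0$ and $C^2=0$ appears, concludes via $L\equiv tC$ and \autoref{l-abund-effective}. Only in the residual strictly nef case does one pass to $\overline{k}$, and then the target is \emph{ampleness}, which descends trivially. The conductor $E>0$ is used precisely there: strict nefness of $L$ gives $f^*L\cdot E>0$, which feeds into Riemann--Roch on the minimal resolution of $Y$ to force $h^0(mf^*L)\to\infty$, hence $f^*L$ (and so $L$) is ample. Your proposal skips the strictly-nef reduction over $k$, and that reduction is exactly what makes the imperfect-field step go through.
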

\begin{proof}
    Without loss of generality we can suppose that $k=H^0(X, \mathcal{O}_X)$.
	We divide the proof in several steps.
	\setcounter{step}{0}
	\begin{step}
		We can suppose that for all irreducible curves $C\subset X$ such that $L \cdot C=0$, then $C^2=0$.
	\end{step}
	\begin{proof}
		We first note that $C^2 \leq 0$ by \autoref{l-C-trivial}.
		Let us consider a sufficiently small rational number $\varepsilon >0$ such that $(X, \varepsilon C)$ is klt.
		If $C^2<0$, by \cite[Theorem 4.4]{Tan18} there exists a birational morphism $\varphi \colon X \to Y$ such that $\text{Ex}(\varphi)=C$ and a Cartier divisor $L_Y$ such that $L \sim \varphi^* L_Y$.
		Thus it is sufficient to prove that $L_Y$ is num-semiample.
		Since $Y$ is a klt Calabi--Yau surface by \autoref{l-image-CY}, this process will terminate after a finite number of steps as the Picard number decreseas by 1 at each step.
	\end{proof}
	\begin{step}
		If there exists a curve $C$ such that $L \cdot C =0$, then $L$ is num-semiample.
	\end{step}
	\begin{proof}
	    By \autoref{l-C-trivial} we have $L \equiv tC$ for some $t >0$. However $(X,\varepsilon C)$ is klt for some $\varepsilon >0$ and so we conclude that $L$ is num-semiample by \autoref{l-abund-effective}.
	\end{proof}
	Suppose now $L$ is strictly nef. To conclude the proof it is sufficient to show that $L$ is ample.
	\begin{step}
		We can suppose that $X$ is geometrically normal.
	\end{step}
	\begin{proof}
		Suppose $X$ is not geometrically normal.
		Let $Y$ be the normalisation of $(X \times_k \overline{k})_{\text{red}}$ and let
		$f \colon Y \to X$ be the natural morphism. 
		Since $X$ is not geometrically normal and $k=H^0(X, \mathcal{O}_X)$ is algebraically closed in $k(X)$, by \cite[Theorem 1.1]{Tan18b} there exists an effective divisor $E>0$ such that
		\[K_Y+E \sim f^*K_X \equiv 0.\]
	     By Serre duality we have 
	     $H^2(Y, \mathcal{O}_Y(f^*mL)) \simeq H^0(Y, \mathcal{O}_Y(K_Y-f^*mL))^* ,$
	     which vanishes as $(K_Y-f^*mL) \cdot A = (-E-f^*mL) \cdot A<0$ for an ample Cartier divisor $A$ and $m > 0$.
	     Let $\pi \colon Z \to Y$ be the minimal resolution and write $K_Z+F = \pi^*K_Y,$ for some effective $\pi$-exceptional $\mathbb{Q}$-divisor $F$.
	    By the Riemann-Roch theorem \cite[Theorem 2.10]{Tan18} and the projection formula we deduce
	    \begin{equation*} \label{eq1}
        \begin{split}
        h^0(Z, \mathcal{O}_Z(m\pi^*f^*L)) & \geq \chi(Z, \MO_Z) +m\pi^*f^*L \cdot (m\pi^*f^*L-K_Z) \\
         & = \chi(Z, \MO_Z) + mf^*L\cdot E,
        \end{split}
\end{equation*}
where the last inequality follows from $L^2=0$, $\pi^*f^*L \cdot F=0$ and $K_Y \equiv -E$.
		Since $L$ is strictly nef, we have $f^*L \cdot E>0$ and thus $h^0(Y, \mathcal{O}_Y(mf^*L))>0$ for $m$ sufficiently large. As $f^*L$ is strictly nef and effective, then $f^*L$ is ample. Therefore $L$ is ample, concluding.
	\end{proof}
	
	We now prove the assertion of the theorem when $X$ is geometrically normal.
	Note that if $\overline{k} \simeq \overline{\mathbb{F}_p}$ (resp.~ $\overline{k} \neq \overline{\mathbb{F}_p}$), then $X_{\overline{k}}$ is klt (resp.~ $\mathbb{Q}$-factorial).
	Indeed, $X_{k^{\text{sep}}}$ is klt by \cite[Proposition 2.15]{kk-singbook} and thus $\mathbb{Q}$-factorial by \autoref{l-Q-fact}.
	Therefore by \cite[Lemma 2.5]{Tan18b} $X_{\overline{k}}$ is $\mathbb{Q}$-factorial. We now divide the proof in two cases according to the singularities of $X_{\overline{k}}$.
	If $X_{\overline{k}}$ has canonical singularities, we conclude by \autoref{p-abundance-smoothK}.
	If the singularities of $X_{\overline{k}}$ are worse than canonical, then \autoref{l-rat-CY} guarantees that $X_{\overline{k}}$ is a rational surface.
	Let us consider the minimal resolution $\pi \colon Y \to X_{\overline{k}}$.
	Since $Y$ is a smooth rational surface, $H^1(Y, \MO_Y)=0$ and thus by the Leray spectral sequence we deduce that $H^1(X_{\overline{k}}, \MO_{X_{\overline{k}}} )=0$.
	By flat base change, we conclude $H^1(X, \MO_X)=0$
	and thus $\chi(X, \MO_X) \geq 1$.
	Since $\nu(L) \geq 1$, we have $H^2(X, \mathcal{O}_X(L)) \simeq H^0(X, \mathcal{O}_X(K_X-L))^*=0$ and by Riemann-Roch we deduce
	$h^0(X, \mathcal{O}_X(L)) \geq \chi(X, \MO_X) \geq 1$.
	Therefore $L$ is ample, concluding the proof.
\end{proof}

\begin{theorem}\label{t-num-semiampleness}
	Let $k$ be a field and let $(X, \Delta)$ be a projective klt surface pair such that $K_X+ \Delta$ is $\mathbb{Q}$-Cartier and $K_X+\Delta \equiv 0$.
	If $L$ is a nef $\mathbb{Q}$-Cartier $\mathbb{Q}$-divisor on $X$, then $L$ is num-semiample.
\end{theorem}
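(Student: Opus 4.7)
The plan is to reduce to \autoref{p-babycase} by running an $L$-trivial MMP and handling the residual strictly nef case via a Riemann--Roch computation on the minimal resolution. When $\charac k = 0$ the statement is \cite[Theorem 8.2]{LP20}, so I assume $\charac k = p > 0$. After replacing $L$ by a Cartier multiple and noting $X$ is $\mathbb{Q}$-factorial by \autoref{l-Q-fact}, I split by numerical dimension: $\nu(L)=0$ is trivial, while $\nu(L)=2$ follows from \cite[Theorem 4.2]{Tan18} since $L \equiv L - (K_X+\Delta)$ is nef and big.

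For $\nu(L) = 1$, I first contract every $L$-trivial curve $C$ with $C^2 < 0$. By \autoref{l-C-trivial} any $L$-trivial curve already satisfies $C^2 \leq 0$. Choosing $0 < \varepsilon \ll 1$ so that $(X, \Delta + \varepsilon C)$ is klt, the inequality $(K_X + \Delta + \varepsilon C) \cdot C = \varepsilon C^2 < 0$ exhibits $C$ as a negative extremal ray of a klt MMP, contractible via \cite[Theorem 4.4]{Tan18}. The contraction is crepant for $(X, \Delta)$ and $L$-trivial, so $L$ descends and by \autoref{l-image-CY} the target is again a klt Calabi--Yau surface. Since the Picard rank strictly drops, this process terminates.

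After the reduction, either some $L$-trivial curve $C$ with $C^2 = 0$ survives, in which case \autoref{l-C-trivial} gives $L \equiv tC$ for some $t > 0$ and \autoref{l-abund-effective} concludes; or $L$ is strictly nef. If $\Delta = 0$, then $K_X \equiv 0$ and \autoref{p-babycase} applies. If $\Delta \neq 0$, I rule out the strictly nef case as follows. Let $\pi \colon Y \to X$ be the minimal resolution and write $K_Y + \Delta_Y = \pi^*(K_X + \Delta) \equiv 0$. Since $\pi^*L$ has zero intersection with every $\pi$-exceptional divisor, $\pi^*L \cdot K_Y = -\pi^*L \cdot \Delta_Y = -L \cdot \Delta$. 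Combined with $(\pi^*L)^2 = L^2 = 0$, Riemann--Roch on the smooth surface $Y$ yields
\[
h^0(Y, m\pi^*L) + h^2(Y, m\pi^*L) \;\geq\; \chi(\MO_Y) + \tfrac{m\, L\cdot \Delta}{2}.
\]
Fixing an ample divisor $H$ on $X$, one has $(K_Y - m\pi^*L)\cdot \pi^*H = -\Delta\cdot H - m\, L\cdot H < 0$ for $m \gg 0$, so Serre duality forces $h^2(Y, m\pi^*L) = 0$. Strict nefness together with $\Delta \neq 0$ give $L \cdot \Delta > 0$, hence $h^0(Y, m\pi^*L) \to \infty$, making $\pi^*L$ and then $L = \pi_*\pi^*L$ $\mathbb{Q}$-effective; but then strict nefness of $L$ forces $L^2 > 0$, contradicting $\nu(L) = 1$.

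The main obstacle lies in this last strictly-nef case with $\Delta \neq 0$: the boundary contribution must be absorbed by combining the Calabi--Yau identity $K_Y \equiv -\Delta_Y$ with the pairing $\pi^*L \cdot K_Y = -L \cdot \Delta$ to produce a linear-in-$m$ term in Riemann--Roch that defeats the strict nefness hypothesis. All remaining configurations reduce cleanly to \autoref{p-babycase} or \autoref{l-abund-effective}.
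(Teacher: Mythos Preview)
Your proof is correct and uses the same core ingredients as the paper: the Riemann--Roch computation on the minimal resolution when $L\cdot\Delta>0$, the contraction of $L$-trivial negative curves, and the reduction to \autoref{p-babycase} once $\Delta=0$. The organisation, however, is genuinely tighter than the paper's. The paper proceeds by first disposing of $L\cdot\Delta>0$, then separately arranging that every component of $\Delta$ has $C^2\le 0$, then contracting the components with $C^2<0$, and finally (Step~4) using that the residual $\Delta$ is nef and semiample to produce a fibration $X\to B$ along which $L$ descends. You bypass Steps~2--4 entirely: after contracting \emph{all} $L$-trivial negative curves (not just those in $\operatorname{Supp}\Delta$), you observe that if $L$ ends up strictly nef and $\Delta\neq 0$ then automatically $L\cdot\Delta>0$, so the Riemann--Roch argument fires and yields a contradiction with $\nu(L)=1$. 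This is cleaner and avoids invoking semiampleness of $\Delta$ and the descent \autoref{p-descent-over-curve}. The trade-off is modest: the paper's Step~4 actually exhibits the semiample fibration explicitly via $\Delta$, whereas your route reaches a contradiction and falls back on the $C^2=0$ case or \autoref{p-babycase}; but for the purposes of the theorem the outcome is identical.
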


\begin{proof}
    If the characteristic of $k$ is 0, this is \cite[Theorem 8.2]{LP20}. So we suppose the characteristic is $p>0$ and we subdivide the proof according to the numerical dimension of $L$.
	If $\nu(L)=0$, the claim is obvious. 
	If $\nu(L) =2$, then $L$ is big and nef and we conclude by the base-point-free theorem (\cite[Theorem 4.2]{Tan18}).
	
	The only case we thus need to study in detail is when
	$\nu(L)=1$. We can suppose $L$ is Cartier.
	By Serre duality, $h^2(X,\mathcal{O}_X(L))=h^0(X, \mathcal{O}_X(K_X -L))$ which vanishes as $\nu(L) \geq 1$.
	The strategy is to reduce to \autoref{p-babycase}.
	For this we subdivide the proof in various steps.
	\setcounter{step}{0}
	
	\begin{step}\label{s-LD=0}
		If $L \cdot \Delta>0$, then $L$ is semiample.
	\end{step}
	\begin{proof}
		Let $\pi \colon W \to X$ be the minimal resolution and let $K_W+\Delta_W = \pi^* (K_X+\Delta)$.
		By the Riemann-Roch theorem, we have
		\begin{align*}
			h^0(W, \mathcal{O}_W(m\pi^*L)) & \geq \chi(W, \MO_W) + m\pi^*L \cdot (m\pi^*L - K_W) \\
			&=\chi(W, \MO_W) + m\pi^*L \cdot \Delta_W= \chi(W, \MO_W)+mL \cdot \Delta.
		\end{align*}
		In particular, for $m \gg 0$ we have $h^0(X, \mathcal{O}_X(mL)) \neq 0$ and we conclude that $L$ is semiample by \autoref{l-abund-effective}.
	\end{proof}
	
	\begin{step}
		We can assume that every irreducible curve $C$ contained in the support of $\Delta$ satisfies $C^2 \leq 0$.
	\end{step} 
	\begin{proof}
		Suppose that there exists a curve $C$ in the support of $\Delta$ such that $C^2>0$.
		Then $\Delta=  a C + \Gamma$ and $K_X+\Gamma = -a C$ is a big and nef $\bQ$-Cartier $\bQ$-divisor. In particular, $(X, \Gamma)$ is klt and $L-(K_X +\Gamma)$ is a big and nef $\bQ$-Cartier $\bQ$-divisor. Thus we conclude $L$ is semiample by the base-point-free theorem.
	\end{proof}
	
	From now on, we assume that $L \cdot \Delta =0$ and all curves $C$ in the support of $\Delta$ satisfy $C^2 \leq 0$.
	
	\begin{step}
		We can suppose that each curve $C$ in $\Delta$ satisfies $C ^2=0$. In particular, $\Delta$ is a nef $\bQ$-Cartier $\bQ$-divisor.
	\end{step}
	\begin{proof}
		Suppose there exists a curve $C$ such that $\Delta=aC+\Gamma$, $C$ is not contained in the support of $\Gamma$ and $C^2<0$.
		Then we have 
		\[(K_X+C) \cdot C \leq (K_X + \Delta + (1-a)C) \cdot C= (1-a)C^2<0.\]
		Thus by \cite[Theorem 2.10]{tanaka2020abundance} there exists a birational map $\pi \colon X \rightarrow Y$ such that $\text{Ex}(\pi)=C$ and $L=\pi^*L_Y$ for some $\bQ$-Cartier $\bQ$-divisor $L_Y$ on $Y$.
		Moreover $(Y, \pi_*\Delta)$ is a klt Calabi--Yau pair by \autoref{l-image-CY}.
		After a finite number of such contractions, we end up with a klt Calabi--Yau pair $(Z, \Delta_Z)$ such that all irreducible curves $C$ in $\Delta_Z$ satisfy $C^2=0$. 
	\end{proof}
	
	\begin{step}
		We can suppose $\Delta=0$.
	\end{step}
	\begin{proof}
		If $\Delta \neq 0$, then $\Delta$ is an effective nef $\mathbb{Q}$-Cartier $\bQ$-divisor with numerical dimension $\nu(\Delta) \geq 1$. Since it is not big by Step \autoref{s-LD=0}, $\nu(\Delta) = 1$. 
		By \autoref{l-abund-effective} $\Delta$ is semiample, and we denote by $g \colon X \to B$ the induced contraction. 
		Since $L \cdot \Delta=0$, by \autoref{p-descent-over-curve} there exists an ample $\bQ$-Cartier $\bQ$-divisor $A$ on $B$ such that $L \equiv g^*A$.
	\end{proof}
	Therefore we reduced to prove the theorem in the case where $X$ is a projective surface with klt singularities and $K_X \equiv 0$, which we proved in \autoref{p-babycase}.
\end{proof}

\section{Generalised abundance for surfaces}

In \cite[Theorem B]{LP20} the authors show that,  over fields of characteristic 0, \autoref{c-gen-ab} is implied by the standard conjectures of the MMP and the semiampleness conjecture.
Their arguments heavily rely on the canonical bundle formula and therefore do not extend to positive or mixed characteristic.
Despite this obstacle, in this section we show the generalised abundance conjecture (and variants thereof) in the case of excellent surfaces.

\subsection{Surfaces over a field}

For excellent klt surfaces the base-point-free theorem (resp.~ abundance) has been proven in \cite[Theorem 4.2]{Tan18} (resp.~ in \cite{tanaka2020abundance} and \cite[Theorem 3.1]{BBS}).
These results together with the MMP and \autoref{t-num-semiampleness} are sufficient to prove \autoref{c-gen-ab} for surfaces over fields. Our strategy is similar to \cite{LP20}.

\begin{proposition}\label{T-gen-abund-surf}
Let $(X,B)$ be a projective klt surface pair over a field $k$. Suppose that $K_{X}+B$ is a pseudo-effective $\mathbb{Q}$-Cartier and let $M$ be a nef $\mathbb{Q}$-Cartier $\mathbb{Q}$-divisor such that $L:=K_X+B+M$ is nef.
Then $L$ is num-semiample.
\end{proposition}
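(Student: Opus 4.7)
The plan is to argue by cases on $\nu(L)$, reducing in each case either to the base-point-free theorem for excellent surfaces, to classical abundance for klt surfaces, or to the semiampleness statement for klt Calabi--Yau surfaces (\autoref{t-num-semiampleness}).

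\emph{Case $\nu(L)=0$:} Then $L\equiv 0$ and there is nothing to prove.

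\emph{Case $\nu(L)=2$:} Here $L$ is big and nef. For small $\epsilon>0$ set $L':=(1+\epsilon)L$. Since
\[ L'-(K_X+B) = \epsilon L + M \]
is a sum of nef classes (hence nef) and is big because $\epsilon L$ already is, the base-point-free theorem for klt excellent surfaces (\cite[Theorem 4.2]{Tan18}) yields that $L'$, and therefore $L$, is semiample.

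\emph{Case $\nu(L)=1$:} This is the heart of the argument. Since $L^{2}=0$, expanding $L=K_X+B+M$ gives
\[ 0 \;=\; L^{2} \;=\; L\cdot(K_X+B) + L\cdot M. \]
Each summand is non-negative because $L$ is nef and $K_X+B$ is pseudo-effective, respectively $L$ and $M$ are nef, so both vanish. In particular $L\cdot M=0$. Applying the Hodge index theorem to the nef classes $L,M$ satisfying $L^{2}=L\cdot M=0$ and $L\not\equiv 0$ forces $M^{2}=0$, and since any isotropic subspace of $N^{1}(X)_{\bR}$ has dimension at most $1$ (the intersection form having signature $(1,\rho-1)$), the classes $L$ and $M$ are proportional. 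Thus $M\equiv \mu L$ for some $\mu\in \bQ_{\geq 0}$ (rationality is automatic as $L,M\in N^{1}(X)_{\bQ}$ and $L\not\equiv 0$). Since $K_X+B \equiv (1-\mu)L$ is pseudo-effective and $\nu(L)=1$, we must have $\mu\in [0,1]$.

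From here the conclusion splits in two. If $\mu=1$ then $K_X+B\equiv 0$, so $(X,B)$ is a klt Calabi--Yau surface pair and $L\equiv M$ is a nef $\bQ$-Cartier $\bQ$-divisor; \autoref{t-num-semiampleness} then gives that $L$ is num-semiample. If $\mu<1$ then $K_X+B\equiv (1-\mu)L$ is nef (nefness being numerical), and abundance for klt excellent surfaces (\cite[Theorem 1.1]{tanaka2020abundance} or \cite[Theorem 3.1]{BBS}) implies $K_X+B$ is semiample; hence $L$, being numerically proportional by a positive rational factor to the semiample divisor $K_X+B$, is num-semiample. The only non-routine step I expect is the Hodge-index identification $M\equiv \mu L$ in Case $\nu(L)=1$; once this proportionality is in hand, everything reduces cleanly to \autoref{t-num-semiampleness} and the surface abundance theorem.
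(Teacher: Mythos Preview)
Your proof is correct and takes a genuinely different route from the paper's. The paper first disposes of the big case (as you do), then runs a $(K_X+B)$-MMP to reach a minimal model with Iitaka contraction $f\colon X \to Z$ and argues by cases on $\dim Z$: when $\dim Z=1$ it splits further on whether $L$ is trivial on the general fibre, invoking the descent result \autoref{p-descent-over-curve} in one subcase and a bigness computation in the other; when $\dim Z=0$ it applies \autoref{t-num-semiampleness} on the minimal model $Y$ and then transports the conclusion back to $X$ via the negativity lemma, again splitting on $\nu(\pi_*L)$. By contrast, your argument in the key case $\nu(L)=1$ uses the Hodge index theorem to force the numerical proportionality $M\equiv \mu L$ (hence $K_X+B\equiv(1-\mu)L$) directly on $X$; this has the pleasant consequence that $K_X+B$ is automatically nef on $X$ itself, so no MMP is needed, and one lands immediately either in the Calabi--Yau situation ($\mu=1$, handled by \autoref{t-num-semiampleness}) or in the situation where ordinary abundance applies ($\mu<1$). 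Your route is shorter, avoids both the MMP and the descent lemma, and folds the paper's $\dim Z=1$ and $\dim Z=0$ cases into a single Hodge-index step; the paper's approach, on the other hand, makes the underlying Iitaka-fibration geometry more visible.
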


\begin{proof}
Note first that if $L$ is big, then $2L-(K_{X}+B)=L+M$ is big and nef, hence $L$ is semiample by the base-point-free theorem (\cite[Theorem 4.2]{Tan18}).
	
As $X$ is a surface, by running a $(K_X+B)$-MMP and abundance (\cite{Tan18}) there is a birational contraction $g \colon X \to X_{\text{min}}$ together with a contraction morphism $h \colon X_{\text{min}} \to Z:=\Proj_k R(X, K_X+B)$ and an ample $\bQ$-Cartier $\bQ$-divisor $A$ on $Z$ such that $K_{X_{\text{min}}}+g_*B=h^*A$.
We denote by $f\colon X \to Z$ the composition $h \circ g$.
We divide the proof according to the dimension of $Z$.
\setcounter{cas}{0}
	\begin{cas}
	Suppose $\dim Z=2$. This means that $K_{X}+B$ is big and therefore also $L$, concluding.
	\end{cas}
	\begin{cas}
		Suppose $\dim Z=1$.
		If $L \cdot F=0$ for a general fibre $F$ of $f$, by \autoref{p-descent-over-curve} we have $L \equiv f^*N$ for some $\mathbb{Q}$-Cartier $\bQ$-divisor $N$ on $Z$. Note that $N$ must be an ample $\mathbb{Q}$-Cartier $\mathbb{Q}$-divisor, since if $C$ is a sufficiently general curve on $X$ we have $L \cdot C \geq (K_{X}+B) \cdot C > 0$ as $f_{*}C=Z$. 
		
		Suppose now that $L \cdot F > 0$, \emph{i.e.} $L$ is relatively big over $Z$. It is sufficient to show that $L$ big to conclude.
		By the negativity lemma we deduce $K_{X}+B=g^{*}(K_{X_{\text{min}}}+g_*B)+E$, where $E$ is an effective $\mathbb{Q}$-divisor contracted by $f$.
		In particular $K_X+B \sim_{\mathbb{Q}}f^*A+E$ where $A$ is an ample $\mathbb{Q}$-Cartier $\bQ$-divisor on $Z$. 
		As $L=(f^*A+E)+M$ is relatively big and $f^*A+E$ is numerical trivial on the generic fibre, then $M$ is relatively big.
		Note that $f^*A+M$ is nef and $(f^* A+M)^2 \geq 2 f^*A \cdot M >0$ and thus big.
		Therefore $L$ is big, concluding.
	\end{cas}
	
	\begin{cas}
		Finally suppose that $\dim Z = 0 $. Let $\pi\colon X \to Y$ be the minimal model of a $(K_{X}+B)$-MMP. Then $K_{Y}+B_{Y} \sim_{\mathbb{Q}} 0$ and $L'=\pi_{*}L \sim_{\mathbb{Q}} \pi_*M$ is nef on $Y$, and hence num-semiample by \autoref{t-num-semiampleness}.
		Choose $0<t<1$ sufficiently small such that $X \to Y$ is the end-product of a $(K_{X}+B+tM)$-MMP. Then by the negativity lemma $K_{X}+B+tM\equiv t\pi^{*}L'+E$ for $E \geq 0$. 
		If $L'$ is big, then so too is $K_{X}+B+tM$ and also $L$, and the result follows as above. 
		
		Suppose now $\nu(L')=1$.
		As $L'$ is num-semiample, there exists a contraction $g \colon Y \to W$ with $\dim W=1$ such that $L'\equiv g^*A$ for an ample $\mathbb{Q}$-Cartier $\mathbb{Q}$-divisor $A$ on $W$. 
		Denote by $f \colon X \to W$ the composition $g \circ \pi$.
		As $L|_{X_{k(W)}} \equiv \pi^*L'|_{{X_{k(W)}} }\equiv 0$, we conclude that $L \equiv f^*D$ for some $D$ on $W$ by \autoref{p-descent-over-curve}. As $\nu(L')=1$, we conclude that $D$ is ample and thus $L$ is num-semiample.
		
		Finally, if $L'$ is numerically trivial, then
		$L \sim_{\mathbb{Q}} \pi^*L'+E \equiv E$, where $E$ is $\pi$-exceptional. As $L$ is nef, then $E \leq 0$ by the negativity lemma \cite[Lemma 2.14]{bhatt2020}. As $L \equiv E$ is nef, we conclude $E=0$. 	
	\end{cas}
\end{proof}

\subsection{Excellent case} 

We are now ready to prove \autoref{c-gen-ab} for projective surfaces over $R$.

\begin{theorem}\label{c-abundance-general-exc-surf}
	Let $\pi \colon X\to T$ be a projective $R$-morphism of quasi-projective integral normal schemes over $R$.
	Suppose that $(X,B)$ is a klt surface such that
	\begin{enumerate}
		\item $K_{X}+B$ is pseudo-effective over $T$;
		\item $M$ is a nef $\mathbb{Q}$-Cartier $\mathbb{Q}$-divisor over $T$;
		\item $L:=K_X+B+M$ is nef $\mathbb{Q}$-Cartier $\bQ$-divisor over $T$.
	\end{enumerate} 
	Then $L$ is num-semiample over $T$.
\end{theorem}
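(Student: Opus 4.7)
The plan is to adapt the strategy of \autoref{T-gen-abund-surf} to the relative setting over $T$. I first apply Stein factorisation to $\pi$ and replace $T$ by the scheme-theoretic image $\pi(X)$, so that without loss of generality $\pi$ is a surjective contraction; since $\dim X = 2$, this forces $\dim T \in \{0,1,2\}$. The case $\dim T = 0$ is precisely \autoref{T-gen-abund-surf}, so I focus on $\dim T \geq 1$.

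Next I construct the relative Iitaka fibration of $K_X+B$ over $T$: running a relative $(K_X+B)$-MMP over $T$, which terminates by \cite{Tan18}, yields a birational contraction $g : X \to X_{\min}$ with $K_{X_{\min}} + g_*B$ nef over $T$. Applying relative abundance for excellent klt surfaces (\cite[Theorem 3.1]{BBS}), I obtain a contraction $h : X_{\min} \to Z$ over $T$ and an ample-over-$T$ $\mathbb{Q}$-Cartier $\mathbb{Q}$-divisor $A$ on $Z$ with $K_{X_{\min}} + g_*B \sim_{\mathbb{Q}} h^*A$. Write $f := h \circ g : X \to Z$ and let $d := \dim Z_\eta$ be the relative Iitaka dimension of $K_X+B$ over $T$, where $\eta$ is the generic point of $T$.

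The remainder proceeds by case analysis on $d$, mirroring the trichotomy of \autoref{T-gen-abund-surf}. If $d$ equals the relative dimension of $X$ over $T$, then $K_X+B$ is big over $T$, so $L$ and $L+M=2L-(K_X+B)$ are both big and nef over $T$, and the relative basepoint-free theorem (\cite[Theorem 4.2]{Tan18}) gives that $L$ is semiample over $T$. If $d$ is strictly intermediate (which forces $\dim T = 0$ and thus lies in the range already handled), I argue via the fibration $f$ exactly as in Cases 2 of \autoref{T-gen-abund-surf}: either $L \cdot F = 0$ for the general fibre $F$ of $f$ and we descend via \autoref{p-descent-over-curve} to an ample divisor on $Z$, or $L$ is $f$-big and hence $T$-big, reducing to the first case. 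Finally, if $d=0$, then $(X_{\min}, g_*B)$ is a relative klt Calabi--Yau pair over $T$; restricting to the generic fibre over $T$, I apply \autoref{t-num-semiampleness} when the fibre is two-dimensional (so $\dim T = 0$), or elementary arguments on curves and points when $\dim T \geq 1$, to deduce num-semiampleness on the generic fibre. I then globalise via the descent lemmas \autoref{l-descent-num-triv-base} and \autoref{p-descent-over-curve}, observing that $L$ being $\pi$-numerically trivial on the generic fibre of some auxiliary contraction forces the same over all of $T$.

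The main obstacle will be the globalisation step in the $d=0$ case when $\dim T \geq 1$: passing from num-semiampleness of $L$ on the generic fibre over $T$ to the existence of a genuine $T$-contraction $X \to W$ with an ample-over-$T$ divisor $A'$ on $W$ satisfying $L \equiv f'^*A'$ over $T$. I expect to handle this by spreading out the contraction obtained on the generic fibre, running an auxiliary MMP to resolve the resulting rational map, and then using the fibrewise nefness of $L$ together with the descent results to verify the numerical equivalence globally.
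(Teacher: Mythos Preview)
Your proposal is correct in outline but substantially over-engineered compared to the paper's argument, and the ``main obstacle'' you flag is in fact not an obstacle at all.

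The paper's proof is three lines after reducing to a surjective contraction. First, if $L$ is big over $T$ then $2L-(K_X+B)=L+M$ is big and nef over $T$ and the base-point-free theorem \cite[Theorem 4.2]{Tan18} finishes; this already disposes of $\dim T=2$. Second, if $\dim T=1$ and $L$ is not big over $T$, the generic fibre $X_{k(T)}$ is a \emph{curve}, so a nef non-big $\mathbb{Q}$-divisor on it is automatically numerically trivial; then \autoref{p-descent-over-curve} gives $L\equiv \pi^*A$ for some nef (hence num-semiample) $\mathbb{Q}$-divisor on the one-dimensional base $T$. Third, $\dim T=0$ is \autoref{T-gen-abund-surf}. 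No relative $(K_X+B)$-MMP, no Iitaka fibration, no case analysis on $d$ is needed once $\dim T\geq 1$.

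Your detour through the relative Iitaka fibration is harmless but buys nothing: the value of $d=\kappa(K_X+B/T)$ is irrelevant to whether $L$ is big over $T$, which is the only dichotomy that matters. More importantly, the globalisation step you worry about dissolves completely. When $\dim T\geq 1$ the generic fibre has dimension at most one, so ``$L|_{X_\eta}$ is num-semiample'' already means either $L|_{X_\eta}$ is ample (whence $L$ is big over $T$ and you are back in the first case) or $L|_{X_\eta}\equiv 0$ (whence \autoref{p-descent-over-curve} applies directly to $\pi$ itself). There is no contraction on the generic fibre to spread out, no rational map to resolve, and no auxiliary MMP to run. You should drop that entire paragraph and simply split on whether $L$ is big over $T$.
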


\begin{proof}
	Without loss of generality we can suppose $\pi$ is a surjective contraction between normal schemes.
	Suppose first $L$ is big, in which case $2L-(K_{X}+B)$ is big and nef, so $L$ is semiample by \cite[Theorem 4.2]{Tan18}. In particular, if $\dim(T)=2$ we conclude. From now on we suppose that $L$ is not big over $T$, or equivalently that $L|_{X_k(T)}$ is not big.
	
	If $\dim(T)=1$, then $X_{k(T)}$ is a curve. As $L|_{X_{k(T)}}$ is not big, then $L|_{X_{k(T)}} \equiv 0$ and we conclude by \autoref{p-descent-over-curve} that $L \equiv \pi^*A$ where $A$ is a nef $\mathbb{Q}$-divisor on $T$. As nef divisors on a curve are num-semiample, we conclude.

	If $\dim(T)=0$, we apply \autoref{T-gen-abund-surf}.
\end{proof}
In particular, the semiampleness conjecture holds for klt Calabi--Yau excellent surfaces pairs.
\begin{remark}
	Note that the klt assumption is necessary in \autoref{c-num-semiample} and \autoref{c-gen-ab} as there are counterexamples if $(X,B)$ is allowed to be lc (or even log smooth), even if $L$ is supposed to be big \cite[Examples 7.1, 7.2]{MNW15}.
\end{remark}

\begin{remark}
	 The semiampleness conjecture and generalised abundance are false for $\mathbb{R}$-divisors as shown by the examples on projective K3 surfaces constructed in \cite[Theorem 1.5.b]{FT18}.
\end{remark}

The pseudo-effectivity of $K_{X}+B$ is necessary as shown in \cite[Example 6.2]{LP20}.
In fact we can completely characterise when generalised abundance holds in the case of surfaces (cf. \cite[Theorem 3.13]{HL20}).

\begin{proposition}\label{c-failure-gen-ab}
	Let $(X,B+\Bf{M})$ be a generalised klt surface over $T$ and suppose that $\Bf{M}_X$ and $L=K_{X}+B+\Bf{M}_{X}$ are nef $\mathbb{Q}$-Cartier $\bQ$-divisors over $T$.
	If $L$ is not num-semiample, then
	\begin{enumerate}
	    \item $T$ is the spectrum of a field $k$;
	    \item $K_X+B$ is not pseudo-effective;
	    \item $-(K_X+B) \equiv t \Bf{M}_X$ for some $0<t \leq 1$;
	    \item $\nu(\Bf{M}_X)=1$.
	\end{enumerate}
\end{proposition}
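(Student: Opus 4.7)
My plan is to argue the contrapositive: assume $L$ is not num-semiample and establish each of (1)--(4). A key preliminary is that $(X,B+\Bf{M})$ generalised klt implies $(X,B)$ is klt (hence $X$ is $\mathbb{Q}$-factorial by \autoref{l-Q-fact}). Indeed, for any birational model $\pi\colon Y\to X$, the negativity lemma applied on the descent model of $\Bf{M}$ yields $\pi^{*}\Bf{M}_{X}\geq \Bf{M}_{Y}$, so the identity $K_{Y}+B_{Y}+\Bf{M}_{Y}=\pi^{*}(K_{X}+B+\Bf{M}_{X})$ gives $B'_{Y}:=\pi^{*}(K_{X}+B)-K_{Y}=B_{Y}-(\pi^{*}\Bf{M}_{X}-\Bf{M}_{Y})\leq B_{Y}$, whose coefficients are therefore $<1$. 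As a consequence, the base-point-free theorem \cite[Theorem 4.2]{Tan18} applies to $(X,B)$ whenever we exhibit $aL-(K_{X}+B)$ as big and nef for some $a>0$.

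Assertions (1), (2), and (4) then reduce to earlier results. For (1), if $\dim T\geq 1$ the proof of \autoref{c-abundance-general-exc-surf} carries over: either $L$ is big over $T$ (automatic when $\dim T=2$ since the generic fibre is zero-dimensional), whence $L+\Bf{M}_{X}=2L-(K_{X}+B)$ is big and nef and base-point-freeness gives semiampleness; or $\dim T=1$ with $L$ not big over $T$, in which case $L|_{X_{k(T)}}$ has degree zero on the curve $X_{k(T)}$ and \autoref{p-descent-over-curve} writes $L\equiv \pi^{*}D$ for some nef $D$ on $T$, which is num-semiample. For (2), if $K_{X}+B$ is pseudo-effective then \autoref{T-gen-abund-surf} applied to the klt pair $(X,B)$ with moduli divisor $\Bf{M}_{X}$ directly yields $L$ num-semiample. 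For (4), $\nu(\Bf{M}_{X})=0$ forces $\Bf{M}_{X}\equiv 0$ and hence $L\equiv K_{X}+B$, contradicting the non-pseudo-effectivity from (2); whereas $\nu(\Bf{M}_{X})=2$ makes $L+\Bf{M}_{X}$ big and nef, again reducing to base-point-freeness.

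The content of the proposition lies in (3). As above, $\nu(L)=1$ and $L^{2}=0$. The key calculation is $L\cdot \Bf{M}_{X}$: if it were positive, then using $L^{2}=\Bf{M}_{X}^{2}=0$ one obtains $(L+\Bf{M}_{X})^{2}=2L\cdot \Bf{M}_{X}>0$, so $L+\Bf{M}_{X}=2L-(K_{X}+B)$ is big and nef, contradicting non-num-semiampleness via base-point-freeness. Hence $L\cdot \Bf{M}_{X}=0$. The Hodge index theorem tells us that $N^{1}(X)_{\mathbb{R}}$ has signature $(1,\rho-1)$, so any totally isotropic subspace is at most one-dimensional; but the span of $L$ and $\Bf{M}_{X}$ is precisely such a subspace, forcing $L\equiv c\Bf{M}_{X}$ for some $c\in\mathbb{R}$. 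Nefness and $L\not\equiv 0$ give $c>0$, while the relation $K_{X}+B\equiv(c-1)\Bf{M}_{X}$ combined with the non-pseudo-effectivity of $K_{X}+B$ forces $c<1$. Therefore $-(K_{X}+B)\equiv t\Bf{M}_{X}$ with $t=1-c\in(0,1)$, completing (3). The main obstacle is the Hodge-theoretic step identifying $L$ as proportional to $\Bf{M}_{X}$; once this is in hand, the rest follows by elementary numerical bookkeeping.
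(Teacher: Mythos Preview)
Your proof is correct and follows essentially the same route as the paper: reduce to $T=\Spec(k)$ via descent/base-point-free, invoke \autoref{T-gen-abund-surf} for (2), rule out $\nu(\Bf{M}_X)\in\{0,2\}$, and use the Hodge index theorem on the isotropic pair $L,\Bf{M}_X$ to obtain proportionality (the paper phrases this as $(K_X+B+2\Bf{M}_X)^2=0$ and cites \cite[Lemma~3.2]{HL20}, which is the same computation). Your explicit verification that generalised klt implies $(X,B)$ klt is a useful addition the paper leaves implicit.
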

    
\begin{proof}

   We first suppose $\dim(T) \geq 1$.  
   If $L$ is big, then it is num-semiample by the base-point-free theorem. Therefore the only interesting case is $L$ not big and $\dim(T)=1$. Then $L|_{X_{_{k(T)}}} \equiv 0$ and therefore $L \equiv g^*M$ by \autoref{p-descent-over-curve}, proving (a).
    
    We can thus suppose that $\dim(T)=0$, i.e. it is the spectrum of a field. 
    We first note that $K_X+B$ is not pseudoeffective by \autoref{c-abundance-general-exc-surf}.
    We now follow some of the ideas of \cite[Theorem 3.13]{HL20}.
    If $K_X+B+2\Bf{M}_X$ is big, then $2L-(K_X+B)$ is big and nef and thus $L$ is semiample by the base-point-free theorem.
    
    If not, then $(K_X+B+2\Bf{M}_X)^2=0$, which implies that $(K_X+B+\Bf{M}_X) \cdot \Bf{M}_X=0$ and $\Bf{M}_X^2=0$.
    By an application of the Hodge index theorem (see \cite[Lemma 3.2]{HL20}), we conclude there exists $a \in \mathbb{Q}_{\geq 0}$ such that $a\Bf{M}_X \equiv K_X+B+\Bf{M}_X$.
    Suppose $a \geq 1$. Then $K_X+B \equiv (a-1) \mathbf{M}_X$ is pseudo-effective and thus $L$ is num-semiample by \autoref{T-gen-abund-surf}, contradicting the hypothesis. 
    Therefore $0 \leq a<1$, which implies (c) as $-(K_X+B) \equiv (1-a) \mathbf{M}_X$.
    
    We are only left to check that $\nu(\mathbf{M}_X)=1$.
    As $\mathbf{M}_X$ is not big, then $\nu(\mathbf{M}_X)<2$. If $\nu(\Bf{M}_X)=0$, then $K_X+B$ is pseudo-effective, and thus $L$ is num-semiample by \autoref{T-gen-abund-surf}, contradiction.
\end{proof}

\subsection{Serrano's conjecture and numerical non-vanishing}

As a consequence we obtain a version of Serrano's conjecture for excellent klt surface pairs (see \cite[Corollary 1.8]{HL20} for a similar result in characteristic 0). We note that the case where the base is an imperfect field needs a careful analysis.

\begin{theorem}\label{Serrano}
Let $\pi \colon X \to T$ be a projective morphism of quasi-projective integral normal schemes over $R$
and suppose $(X,B)$ is a klt surface pair. 
If $M$ is a strictly nef Cartier divisor, then $L_{t}:=K_{X}+B+tM$ is ample for $t > 4$.
\end{theorem}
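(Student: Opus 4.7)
The plan is to show that $L_t$ is both strictly nef and num-semiample for $t>4$, and then to deduce ampleness via the Nakai--Moishezon criterion.

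For strict nefness, we apply Tanaka's cone theorem for excellent klt surfaces: every $(K_X+B)$-negative extremal ray of $\overline{\textup{NE}}(X/T)$ is generated by a rational curve $C_R$ with $-(K_X+B)\cdot C_R\leq 2\dim X=4$. Since $M$ is Cartier and strictly nef, $M\cdot C_R\geq 1$, hence $L_t\cdot C_R\geq t-4>0$ for $t>4$. A Mori cone decomposition of an arbitrary class then shows that $L_t$ is nef. For strict nefness, suppose $L_t\cdot C=0$ for some curve $C$; decomposing $[C]$ in the Mori cone forces all extremal contributions to vanish, so $[C]$ lies in the $(K_X+B)$-nef part of the cone. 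The non-negative summands $(K_X+B)\cdot C$ and $M\cdot C$ sum to zero, giving $M\cdot C=0$, which contradicts strict nefness of $M$ on the curve $C$.

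For num-semiampleness, if $K_X+B$ is pseudo-effective over $T$ we apply \autoref{c-abundance-general-exc-surf} to $(X,B)$ with nef moduli $tM$ to conclude directly. Otherwise, \autoref{c-failure-gen-ab} applied to the generalised pair $(X,B+\overline{tM})$ would yield, if $L_t$ failed to be num-semiample, that $T=\Spec(k)$ for some field $k$, $-(K_X+B)\equiv \lambda M$ for some $0<\lambda\leq t$, and $\nu(M)=1$. The endpoint $\lambda=t$ gives $L_t\equiv 0$, trivially num-semiample, a contradiction. In the remaining range $0<\lambda<t$, the divisor $-(K_X+B)\equiv\lambda M$ is itself strictly nef; running a $(K_X+B)$-MMP, which terminates by \cite{Tan18}, yields a Mori fiber space $\pi\colon X'\to W$. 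Since each contracted ray is $(K_X+B)$-negative hence $M$-positive, an inductive argument shows that the pushforward $M'$ on $X'$ remains strictly nef. For $\dim W\in\{0,1\}$ the cone $\overline{\textup{NE}}(X'/T)$ has at most two extremal rays, each generated by a curve on which $M'$ is strictly positive, so Kleiman's criterion makes $M'$ ample on $X'$. Comparing $M$ with $\pi^*M'$ and using the integrality of $M\cdot C_i$ on each exceptional curve together with the negativity lemma then contradicts $\nu(M)=1$ on $X$. Over imperfect base fields strict nefness need not be preserved by base change, and this is dealt with by the techniques of \autoref{p-babycase}, passing through the normalisation of a base change.

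Combining the two steps, $L_t$ is strictly nef and num-semiample; write $L_t\equiv f^*A$ for a contraction $f\colon X\to Z$ and an ample $\mathbb{Q}$-Cartier divisor $A$ on $Z$. If $f$ contracted a curve $C$ then $L_t\cdot C=0$ would contradict strict nefness, so $f$ is finite. Hence $L_t^{\dim X}=(\deg f)\cdot A^{\dim X}>0$ and $L_t$ is big, so by Nakai--Moishezon a nef, big and strictly nef divisor on a surface is ample. The main obstacle is ruling out the case $0<\lambda<t$ when $K_X+B$ is not pseudo-effective: this requires combining the MMP reduction to a Mori fiber space with the integrality constraints coming from the Cartier structure of $M$ to force $\nu(M)\neq 1$, and the analysis must be carried out carefully over imperfect residue fields.
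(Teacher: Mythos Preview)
Your argument is correct and matches the paper up through the application of \autoref{c-failure-gen-ab}: both you and the authors deduce strict nefness of $L_t$ from the cone theorem, and both reduce to the situation $T=\Spec(k)$, $-(K_X+B)$ strictly nef, $\nu(-(K_X+B))=1$. The divergence, and the genuine gap in your proposal, is in how this last case is handled.

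Your plan is to run a $(K_X+B)$-MMP to a Mori fibre space $\pi\colon X\to X'\to W$, argue that the pushforward $M'$ is ample on $X'$ by Kleiman, and then ``compare $M$ with $\pi^*M'$'' to contradict $\nu(M)=1$. The second step does not go through. Writing $\pi^*M'=M+G$ with $G\geq 0$ exceptional (which is what the negativity lemma actually gives, since $(M-\pi^*M')\cdot C_i=M\cdot C_i>0$), one gets
\[
M^2=(\pi^*M')^2-2\,\pi^*M'\cdot G+G^2=(M')^2+G^2,
\]
and since $G^2\leq 0$ there is no contradiction with $M^2=0$: ampleness of $M'$ on $X'$ simply does not force $\nu(M)=2$ on $X$. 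The integrality of $M\cdot C_i$ does not help here. There is also a secondary issue: when $\dim W=1$ and $\rho(X')=2$, you assert both extremal rays of $\overline{\textup{NE}}(X')$ are spanned by curves, but a priori the second ray could lie on $\{K_{X'}+B'=0\}$ and fall outside the scope of the cone theorem; this needs an argument.

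The paper takes a different route in this case. It sets $L:=-(K_X+B)$, assumes $L^2=0$, and shows directly that $L$ is $\mathbb{Q}$-effective (hence ample, being strictly nef). The mechanism is Riemann--Roch after base change: letting $f\colon Y=(X_{\overline{k}})_{\mathrm{red}}^{\nu}\to X$ and writing $K_Y+E=f^*K_X$ with $E\geq 0$ (Tanaka's formula), one splits into two sub-cases. If $E+f^*B>0$, then $f^*L\cdot(-K_Y)=f^*L\cdot(E+f^*B)>0$ and Riemann--Roch on the minimal resolution gives $h^0(nf^*L)\to\infty$. If $E+f^*B=0$, then $B=0$ and $X$ is geometrically normal; here one runs a $K_X$-MMP, shows the output is a klt del Pezzo (this is where the $\rho=2$ cone argument is actually used, but only to extract $H^1(\mathcal{O})=0$ via \cite{Sch01}), and then Riemann--Roch on $X$ itself gives $h^0(nL)\geq\chi(\mathcal{O}_X)\geq 1$. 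Your final sentence gestures at ``the techniques of \autoref{p-babycase}'' for the imperfect case, which is indeed the right neighbourhood, but the argument has to be carried out; as written, the hard case is not proved.
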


\begin{proof}
We can suppose $\pi$ is a contraction, by taking the Stein factorisation.
By the cone theorem for excellent surfaces \cite[Theorem 2.40]{bhatt2020}, $L_{t}$ is strictly nef for $t >4$. 
By \autoref{c-failure-gen-ab} we see that either $L_{t}$ is num-semiample or $L_{t}\equiv -s(K_{X}+B)$ for some $s>0$. 
In the first case $L_{t}$ is necessarily ample.

In the latter case, $H^0(X, \mathcal{O}_X)=k$ is a field, $T=\Spec(k)$ and we are left to show that the strictly nef $\mathbb{Q}$-divisor $L:=-(K_{X}+B)$ is ample. 
Suppose for contradiction it is not: in particular $L^2=0$. 
It is sufficient to show that $L$ is $\mathbb{Q}$-effective. 
We first show that $H^2(\mathcal{O}_X(nL))=0$ for $n$ sufficiently large.
Let $n>0$ such that $nL$ is Cartier, so that by Serre duality there is an isomorphism $H^2(X, \mathcal{O}_X(nL)) \simeq H^0(X, \mathcal{O}_X(K_X-nL))^*$.
If $H^0(X, \mathcal{O}_X(K_X-nL)) \neq 0$, for an ample Cartier divisor $H$ we have $0<(K_X-nL)\cdot H \leq K_X\cdot H -n$, which gives a contradiction for $n$ sufficiently large.

Let $f \colon Y:=(X \times_k \overline{k})_{\red}^{n} \to X$ be the normalisation of the base change to the algebraic closure and let $E \geq 0$ be the $\mathbb{Z}$-divisor for which $K_Y+E = f^*K_X$.
First suppose $E+f^{*} B > 0$, which implies $$f^* L \cdot (-K_Y)=f^*L \cdot (E-f^*K_X)=f^*L \cdot (E+f^*B) >0,$$
where we used the condition $L^2=0$ in the last equality.
Let $\pi \colon Z \to Y$ be the minimal resolution. Thus by Riemann-Roch formula we have
$$ h^0(Z, \mathcal{O}_Z(\pi^*f^*nL)) \geq \chi(Z,\mathcal{O}_Z)-\frac{1}{2}nf^*L \cdot K_Y >0$$
for sufficiently large $n>0$ for which $nL$ is Cartier. 
In particular, $f^*nL$ is effective, and thus ample. 
In particular, $L$ is $\mathbb{Q}$-effective and thus ample.

If $E +f^*B=0$, then $B=0$ and $X$ is geometrically normal by \cite[Theorem 1.1]{Tan18b}. 
We run a $K_X$-MMP which ends with a Mori fibre space $\pi \colon Y \to C$ and $-K_Y$ is strictly nef.
If $\dim(C)=0$, then $Y$ is a klt del Pezzo surface.
If $\dim(C)=1$, then $\rho(Y)=2$  and thus the cone theorem \cite[Theorem 2.40]{bhatt2020} implies that $Y$ is a klt del Pezzo surface.
In both cases, $Y$ is a geometrically normal del Pezzo surface and by \cite{Sch01} we know $H^1(Y, \mathcal{O}_Y)=0$.
As $X$ and $Y$ have both rational singularities by \autoref{l-Q-fact} we deduce that $H^1(X, \mathcal{O}_X)=0$. By Riemann-Roch we thus have
$$ h^0(X, \mathcal{O}_X(nL)) \geq \chi(X,\mathcal{O}_X)+\frac{1}{2}nL \cdot (nL-K_X) \geq 1,$$
which shows $L$ is ample.
\end{proof}

We can generalise this result immediately to threefolds in the setting of \cite{bhatt2020}.

\begin{corollary}\label{Serrano-3}
   Let $\pi \colon X \to T$ be a projective morphism of quasi-projective integral normal schemes over $R$ and $(X,B)$ be a klt threefold pair.
   Suppose that the closed points of $R$ have residue fields of characteristic $0$ or $p>5$. 	
    Let $M$ be a strictly nef Cartier divisor. 
    If $\dim(\pi(X)) \geq 1$, then $L_{t}:=K_{X}+B+tM$ is ample for $t > 4$.
\end{corollary}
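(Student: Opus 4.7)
After Stein factorization, we may assume $\pi$ is a contraction. The plan is to reduce the question to the surface case of Serrano's conjecture, \autoref{Serrano}, via restriction to the generic fibre, and to rule out $L_t$-trivial $\pi$-extremal rays using the relative three-dimensional cone theorem for klt pairs in mixed characteristic from \cite{bhatt2020, takamatsu2021minimal, witaszek2020keels}.

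First I will invoke the relative $(K_X+B)$-cone theorem. The hypothesis $\dim \pi(X) \geq 1$ ensures every fibre of $\pi$ has dimension at most two, so each $(K_X+B)$-negative $\pi$-extremal ray is generated by a rational curve $C$ in some fibre $X_s$ satisfying the Kawamata-type length bound $0 < -(K_X+B) \cdot C \leq 2 \dim X_s \leq 4$. Since $M$ is a strictly nef Cartier divisor we have $M \cdot C \geq 1$, whence $L_t \cdot C > 0$ for $t > 4$; for curves $C$ over $T$ with $(K_X+B) \cdot C \geq 0$, strict nefness of $M$ gives $L_t \cdot C \geq t M \cdot C > 0$ automatically. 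Thus $L_t$ will be $\pi$-strictly nef for $t > 4$.

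To upgrade to $\pi$-ampleness I will restrict to the generic fibre $X_\eta$, which is a klt pair of dimension at most two over $k(\eta)$, a field of residue characteristic $0$ or $p > 5$. After verifying that $M|_{X_\eta}$ remains strictly nef, by specialising a curve $C \subset X_\eta$ to a curve in a closed fibre along a horizontal subvariety (on which $M$ is strictly positive by hypothesis), \autoref{Serrano} applied to $(X_\eta, B_\eta)$ with $M|_{X_\eta}$ yields ampleness of $L_t|_{X_\eta}$ for $t > 4$. Combining generic-fibre ampleness with $\pi$-strict nefness via the relative Kleiman criterion, together with openness of ampleness on an open subset of $T$ and the length-bounded cone theorem ruling out any $L_t$-trivial rays on closed fibres, will deliver $\pi$-ampleness of $L_t$.

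The hard part will be verifying the Kawamata length bound $2\dim X_s$ in the relative threefold cone theorem over $T$; this is precisely the non-trivial output of the mixed-characteristic threefold MMP of \cite{bhatt2020, takamatsu2021minimal, witaszek2020keels}, which is exactly why the residue characteristic hypothesis $p=0$ or $p>5$ is imposed.
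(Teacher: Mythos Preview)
Your plan shares the two essential ingredients with the paper's proof---the cone theorem to get $L_t$ strictly nef, and \autoref{Serrano} on the generic fibre---but the final step has a genuine gap.

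The problem is the passage from ``$L_t$ strictly nef and $L_t|_{X_\eta}$ ample'' to ``$L_t$ ample'' via what you call the relative Kleiman criterion. Kleiman requires positivity on the \emph{closure} $\overline{\mathrm{NE}}(X/T)\setminus\{0\}$, not merely on curves. Strict nefness gives $L_t\cdot C>0$ for every curve $C$, and the cone theorem (with the length bound) handles the $(K_X+B)$-negative extremal rays, but on the $(K_X+B)\geq 0$ part of the closed cone there can be limit classes $z$ with $(K_X+B)\cdot z=0$ and $M\cdot z=0$ simultaneously (since $M$ is only strictly nef, not ample). For such $z$ one has $L_t\cdot z=0$, and neither openness of ampleness over $T$ nor the cone theorem on closed fibres rules this out. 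This is exactly the classical obstruction: strictly nef and big does not imply ample.

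The paper closes this gap with the base-point-free theorem rather than Kleiman. From $L_t|_F$ ample on the generic fibre $F$ one gets $L_t$ \emph{big} over $T$. Then $2L_t-(K_X+B)=L_t+tM$ is nef (sum of nef divisors) and big (since $L_t$ is), so the base-point-free theorem \cite[Theorem~G]{bhatt2020} yields $L_t$ semiample. A semiample strictly nef divisor is ample (the induced morphism contracts no curve, hence is finite). So the correct finish is: generic fibre ampleness $\Rightarrow$ bigness $\Rightarrow$ base-point-free $\Rightarrow$ semiample $\Rightarrow$ ample, rather than an appeal to Kleiman.
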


\begin{proof}
We can suppose $\pi$ to be a contraction.
Let $F$ be the generic fibre of $X \to T$. If $F$ is a surface, then $L_{t}|_{F}$ is ample by \autoref{Serrano}. If instead $F$ is a curve then $L_{t}|_{F}$ is ample since it has positive degree. In particular $L_{t}$ is big.

Then by \cite[Theorem H]{bhatt2020}, $L_{t}$ is strictly nef and moreover $2L_{t}-(K_{X}+B)=M+L_{t}$ is nef and big. Thus we conclude $L_{t}$ is semiample by \cite[Theorem G]{bhatt2020}, and hence it is ample as claimed.
\end{proof}


We can also show the numerical non-vanishing conjecture for generalised surface pairs.
The strategy of the proof is similar to \cite{HL20}. However due to some complications over imperfect field, we apply Serrano's conjecture to solve the non-pseudo-effective case, instead of referring to the explicit classification of \cite[Example 1.1]{Sho00}.

\begin{theorem}[Numerical non-vanishing for generalised klt surfaces]\label{t-num-non-van}
If $(X,B+\Bf{M})$ is a generalised klt surface pair over $T$ and $K_X+B+\Bf{M}_X$ is pseudo-effective, then $K_X+B+\Bf{M}_X$ is num-effective.
\end{theorem}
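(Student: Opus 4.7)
The approach is to reduce to a minimal model via the generalised surface MMP, apply the characterisation of failure of generalised abundance \autoref{c-failure-gen-ab}, and handle the residual case via Serrano's conjecture.

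First, I run a $(K_X+B+\Bf{M}_X)$-MMP over $T$; this exists and terminates by \autoref{g-MMP-surfaces}. Since $L := K_X+B+\Bf{M}_X$ is pseudo-effective, the MMP terminates with a birational contraction $\varphi \colon X \dashrightarrow X'$ such that $L' := K_{X'}+B'+\Bf{M}_{X'}$ is nef. A negativity argument on a common resolution $p \colon W \to X$, $q \colon W \to X'$ yields $p^{*}L = q^{*}L' + F$ with $F \geq 0$ exceptional, so num-effectiveness of $L'$ implies the same for $L$. Thus I may replace $X$ by $X'$ and assume $L$ itself is nef.

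Now I apply \autoref{c-failure-gen-ab}. Either $L$ is num-semiample -- in which case $L \equiv f^{*}A$ for a contraction $f$ and an ample $\mathbb{Q}$-Cartier $A$, hence is num-effective (ample divisors being $\mathbb{Q}$-effective) -- or $T = \Spec(k)$, $K_X+B$ is not pseudo-effective, $-(K_X+B) \equiv t\Bf{M}_X$ for some $0 < t \leq 1$, and $\nu(\Bf{M}_X) = 1$. In the failure case $L \equiv (1-t)\Bf{M}_X$; if $t = 1$ then $L \equiv 0$ and we are done, so henceforth I assume $0 < t < 1$ and reduce to proving that $\Bf{M}_X$ is num-effective.

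If $\Bf{M}_X$ were strictly nef, then a generalised version of \autoref{Serrano} (whose proof already relies on \autoref{c-failure-gen-ab} rather than on the usual-pair klt hypothesis) applied with $M = \Bf{M}_X$ would give that $K_X+B+(1+s)\Bf{M}_X \equiv (1-t+s)\Bf{M}_X$ is ample for $s \gg 0$, forcing $\Bf{M}_X$ to be ample and contradicting $\nu(\Bf{M}_X) = 1$. So there exists a curve $C$ with $\Bf{M}_X \cdot C = 0$; by \autoref{l-C-trivial}, $C^{2} \leq 0$, with equality if and only if $\Bf{M}_X \equiv \lambda C$ for some $\lambda > 0$, in which case $\Bf{M}_X$ is manifestly num-effective. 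If instead $C^{2} < 0$, then $(K_X+B) \cdot C \equiv -t \Bf{M}_X \cdot C = 0$, so for $0 < \varepsilon \ll 1$ with $(X, B+\varepsilon C+\Bf{M})$ still generalised klt, a step of the $(K_X+B+\varepsilon C+\Bf{M}_X)$-MMP contracts $C$ to produce $\pi \colon X \to X''$ with $\rho(X'') = \rho(X)-1$, preserving all the numerical relations since $C$ is trivial against each of $K_X+B$, $\Bf{M}_X$ and $L$. Iterating strictly decreases the Picard rank, so after finitely many steps the alternative $C^{2} = 0$ must be realised, and the resulting num-effectiveness on the final surface pulls back to num-effectiveness of $\Bf{M}_X$.

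The main obstacle is justifying the generalised version of Serrano: $(X, B)$ may fail to be klt as an ordinary pair even when $(X, B+\Bf{M})$ is generalised klt, since the moduli contribution $\Bf{M}_Y - \pi^{*}\Bf{M}_X \geq 0$ can push boundary coefficients above one on birational models. I expect that the proof of \autoref{Serrano} adapts to the generalised setting by the same strategy, as it already invokes \autoref{c-failure-gen-ab}; alternatively one may exploit the relation $-(K_X+B) \equiv t\Bf{M}_X$ and apply \autoref{easy-Serrano} directly to the $\mathbb{Q}$-Cartier divisor $-(K_X+B)$.
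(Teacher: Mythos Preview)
Your approach is essentially identical to the paper's: reduce to a nef model via the generalised MMP, invoke \autoref{c-failure-gen-ab}, and in the residual case use Serrano to rule out strict nefness, then contract $L$-trivial curves of negative self-intersection until one with $C^{2}=0$ appears. The only cosmetic difference is that the paper applies Serrano and the subsequent contractions directly to $-(K_X+B)$ rather than to $\Bf{M}_X$, which is immaterial since $-(K_X+B)\equiv t\,\Bf{M}_X$.

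Your final paragraph's worry, however, rests on a sign error and is not an actual obstacle. On a surface the negativity lemma gives $\pi^{*}\Bf{M}_X - \Bf{M}_Y \geq 0$, not the reverse: setting $D=\pi^{*}\Bf{M}_X-\Bf{M}_Y$ (using Mumford's numerical pullback if necessary), one has $\pi_{*}D=0$ and $-D=\Bf{M}_Y-\pi^{*}\Bf{M}_X$ is $\pi$-nef because $\Bf{M}_Y$ is nef, whence $D\geq 0$. Writing $K_Y+B_Y+\Bf{M}_Y=\pi^{*}(K_X+B+\Bf{M}_X)$ then yields
\[
K_Y+\bigl(B_Y-(\pi^{*}\Bf{M}_X-\Bf{M}_Y)\bigr)=\pi^{*}(K_X+B),
\]
so the ordinary discrepancies of $(X,B)$ are at least the generalised discrepancies of $(X,B+\Bf{M})$. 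Hence $(X,B)$ is a genuine klt pair (and $X$ is $\mathbb{Q}$-factorial by \autoref{l-Q-fact}). This is exactly why the paper can, and does, apply \autoref{Serrano} to the ordinary klt pair $(X,B)$ without further comment; no generalised extension of Serrano is required.
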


\begin{proof}
Let $r \in \mathbb{Q}_{>0}$ such that $L:= r(K_X+B+\Bf{M}_X)$ is Cartier.
By running a $(K_X+B+\Bf{M}_X)$-MMP over $T$ we reduce to prove the statement in the case $K_X+B+\Bf{M}_X$ is nef.
By \autoref{c-failure-gen-ab} we are only left to prove the case when $-(K_X+B)$ is nef with $\nu(-(K_X+B))=1$ and $-(K_X+B) \equiv t \Bf{M}_X$.

By Serrano's conjecture \autoref{Serrano}, $-(K_X+B)$ is not strictly nef. Thus there exists an irreducible curve $C \subset X$ such that $-(K_X+B) \cdot C =0$.
By \autoref{l-C-trivial} we have $C^{2} \leq 0$ with equality if and only if $(-K_{X}+B)\equiv \lambda C$ for $\lambda >0$. Thus we may suppose that $C^{2} <0$. Hence we can contract it $X \to X'$ by running a $(K_X+B+\varepsilon C)$-MMP for $\varepsilon>0$ sufficiently small. This MMP is clearly $(K_X+B)$-trivial. Therefore $(X',B')$ is a klt pair with $-(K_{X'}+B')^2=0$ and $\nu(-(K_{X'}+B'))=1$ and we can repeat then the same procedure. After a finite number of steps this process must terminate. Replacing $(X,B)$ with the output either $L$ is strictly nef or we find a curve $C$ such that $-(K_X+B) \cdot C=0$ and $C^2=0$. In either case we conclude.
\end{proof}

\subsection{Semiampleness for lc pairs}

 In this section we study the generalised abundance for generalised lc  surface pairs under the assumption the b-nef part is b-semiample. 
 In characteristic 0, this is immediate from the Bertini theorem and the abundance for log canonical surfaces.
 We will overcome the lack of Bertini in positive and mixed characteristic via Keel-Witaszek's base-point-free theorem.
 For a nef line bundle $L$ on $X$ over $S$, we define the exceptional locus $\mathbb{E}(L)$ to be the union of all closed integral subschemes $Z \subset X$ such that $L|_Z$ is not relatively big over $S$.
 We recall the following semiampleness criterion for line bundles.

\begin{theorem}[\cite{witaszek2020keels}] \label{wit-keel}
Let $L$ be a nef line bundle on a scheme $X$
projective over an excellent Noetherian base scheme $S$. Then $L$ is semiample over $S$ if
and only if both $L|_{\mathbb{E}(L)}$ and $L|_{X_{\mathbb{Q}}}$ are semiample.
\end{theorem}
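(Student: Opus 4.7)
The plan is to reduce to a fibrewise version of Keel's classical semiampleness theorem in positive characteristic, combined with the hypothesis in characteristic zero. The forward implication is essentially formal: if $L$ descends via a contraction $f \colon X \to Z$ as $L \sim_{\mathbb{Q}} f^*A$ with $A$ ample, then the further restrictions $L|_{\mathbb{E}(L)}$ and $L|_{X_{\mathbb{Q}}}$ still factor through pullbacks of ample divisors via the restricted contractions, hence are semiample.

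For the reverse direction, I would argue fibrewise over $S$. Since $S$ is excellent of finite Krull dimension, each fibre of $X \to S$ lives over a point whose residue field has characteristic either $0$ or $p > 0$. The characteristic-zero locus is handled directly: the hypothesis that $L|_{X_{\mathbb{Q}}}$ is semiample, combined with a standard spreading-out argument, provides an open neighbourhood $U \subset S$ of the generic points of characteristic zero over which $L$ is semiample. For a closed point $s \in S$ of positive residue characteristic, one applies Keel's theorem on the fibre $X_s$: because $\mathbb{E}(L|_{X_s}) \subset \mathbb{E}(L) \cap X_s$, the assumed semiampleness of $L|_{\mathbb{E}(L)}$ restricts to semiampleness on $\mathbb{E}(L|_{X_s})$, which by Keel forces $L|_{X_s}$ itself to be semiample.

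The main obstacle, and the genuine content of the theorem, is globalising these fibrewise statements to produce a single contraction over $S$. The natural strategy is to exhibit, for sufficiently divisible $m > 0$, surjectivity of the restriction map
\[
H^0(X, mL) \twoheadrightarrow H^0(\mathbb{E}(L), mL|_{\mathbb{E}(L)}),
\]
exploiting cohomological vanishing for $mL$ away from the exceptional locus, where $L$ is relatively big by the very definition of $\mathbb{E}(L)$. Combined with base-point freeness of some multiple of $L|_{\mathbb{E}(L)}$, this produces global sections of $mL$ whose base locus is contained in $\mathbb{E}(L)$; since on $\mathbb{E}(L)$ the restricted sections are already base-point free, one concludes that $mL$ is globally base-point free on $X$, giving semiampleness over $S$.

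The hard part is establishing the requisite vanishing and lifting results uniformly across characteristic-zero and positive-characteristic fibres, where Frobenius-based techniques degenerate on the characteristic-zero locus and Kodaira-type vanishing must be replaced by the mixed characteristic analogues available in the setting of \cite{bhatt2020}. It is precisely this delicate interpolation between Keel's positive-characteristic argument and the characteristic-zero hypothesis that is carried out in \cite{witaszek2020keels}, and I would defer the technical execution to their work.
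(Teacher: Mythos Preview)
The paper does not prove this statement at all: Theorem~\ref{wit-keel} is quoted verbatim from \cite{witaszek2020keels} as a black box, with no argument given, and is then applied in the proof of \autoref{lc-abund}. There is therefore no ``paper's own proof'' to compare your proposal against. Your sketch is a reasonable high-level outline of the strategy behind the cited result, but since the paper treats the theorem as an external input, the appropriate response here is simply to cite \cite{witaszek2020keels} rather than to attempt a proof.
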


We first need an adjunction result.

\begin{lemma}\label{adj-sec}
    Let $X \to T$ be a projective contraction of integral, excellent, normal quasi-projective schemes over $R$. 
    If $X$ is a surface and $C$ is an irreducible curve on $X$ contracted over $T$. Suppose that $(X,C)$ is an lc surface and $C$ is an irreducible curve over $T$ such that $(K_{X}+C) \cdot C=0$. Then $(K_{X}+C)|_{C} \sim_{\mathbb{Q}} 0$.
\end{lemma}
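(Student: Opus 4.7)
The plan is to reduce everything to the normalization $\nu\colon C^{\nu}\to C$ via Koll\'ar's adjunction for lc surface pairs (cf.~\cite{kk-singbook}), which yields
\[\nu^{*}\bigl((K_{X}+C)|_{C}\bigr) \sim_{\bQ} K_{C^{\nu}} + \mathrm{Diff}_{C^{\nu}},\]
where $\mathrm{Diff}_{C^{\nu}}$ is an effective $\bQ$-divisor with coefficients in $[0,1]$ because $(X,C)$ is lc, and each preimage of a non-normal point of $C$ appears with coefficient exactly $1$ coming from the conductor. Since $C$ is contracted over $T$, it is projective over the residue field $\kappa(t)$ of its image $t\in T$; taking $\kappa(t)$-degrees and using that $\nu$ is birational we obtain
\[0=(K_{X}+C)\cdot C=\deg K_{C^{\nu}}+\deg \mathrm{Diff}_{C^{\nu}}.\]
Effectivity of $\mathrm{Diff}_{C^{\nu}}$ forces $\deg K_{C^{\nu}}\leq 0$, so the genus of $C^{\nu}$ over its field of constants $\ell=H^{0}(C^{\nu},\MO_{C^{\nu}})$ is at most $1$.

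If $g(C^{\nu})=1$, the displayed degree identity forces $\mathrm{Diff}_{C^{\nu}}=0$. Since the conductor contributes coefficient $1$ at each non-normal point, this already implies $\nu$ is an isomorphism, so $C=C^{\nu}$ is a smooth genus-one curve and $(K_{X}+C)|_{C}\sim_{\bQ} K_{C}$. By Serre duality and Riemann--Roch, $K_{C}$ is a degree-zero line bundle with $h^{0}(K_{C})=1$; a non-zero global section yields a nowhere-vanishing map $\MO_{C}\to K_{C}$ (the cokernel is torsion of degree $0$), so $K_{C}\cong \MO_{C}$.

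If $g(C^{\nu})=0$, then $C^{\nu}\cong \bP^{1}_{\ell}$ and $\nu^{*}((K_{X}+C)|_{C})$ is $\bQ$-linearly trivial on $\bP^{1}_{\ell}$. The surface adjunction formula $(K_{X}+C)\cdot C=2p_{a}(C)-2$ gives $p_{a}(C)=1$, hence $C$ is non-normal with $\delta_{C}=1$; since the conductor already contributes degree $2\delta_{C}=2$ to $\mathrm{Diff}_{C^{\nu}}$, there is no room for further contributions. Thus $C$ acquires a single Gorenstein (nodal/cuspidal) singularity, the Grothendieck conductor formula $\nu^{*}\omega_{C}=\omega_{C^{\nu}}(\mathfrak{c}_{C^{\nu}})$ matches the adjunction formula on the nose, and so $(K_{X}+C)|_{C}\sim_{\bQ}\omega_{C}$ on $C$. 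A Gorenstein integral curve of arithmetic genus one has $\omega_{C}$ a degree-zero line bundle with $h^{0}(\omega_{C})=1$, and as in the smooth case the non-zero section yields $\omega_{C}\cong \MO_{C}$.

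The main obstacle is the descent in the $g(C^{\nu})=0$ case: triviality of a line bundle on $C^{\nu}$ does not automatically descend, since $\ker(\mathrm{Pic}(C)\to\mathrm{Pic}(C^{\nu}))$ can be large (a non-trivial linear algebraic group whose torsion behaviour in positive characteristic is subtle). The argument above sidesteps this by using the lc hypothesis to pin down $\mathrm{Diff}_{C^{\nu}}$ as pure conductor, reducing the descent to the purely intrinsic statement that a Gorenstein integral projective curve of arithmetic genus one has trivial dualizing sheaf.
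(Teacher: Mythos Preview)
Your genus-one case is fine, but the genus-zero non-normal case has a genuine gap. You invoke ``the surface adjunction formula $(K_X+C)\cdot C = 2p_a(C)-2$'' to deduce $p_a(C)=1$, but this identity only holds when $K_X+C$ is Cartier along $C$ (so that $(K_X+C)|_C$ is literally $\omega_C$). For a merely lc pair, $X$ may well carry non-Gorenstein klt singularities at \emph{smooth} points of $C$; at such a point the different picks up a contribution $1-\tfrac{1}{m}>0$, and the correct identity reads
\[
(K_X+C)\cdot C \;=\; 2p_a(C)-2 \;+\; \deg\bigl(\mathrm{Diff}_{C^\nu}-\mathfrak{c}_{C^\nu}\bigr),
\]
where the correction term need not vanish. (Concretely: a smooth branch of $C$ through a $\tfrac{1}{3}(1,1)$ point contributes $\tfrac{2}{3}$ to the different but nothing to the conductor.) Your computation of $\delta_C$ and the conclusion ``$\mathrm{Diff}_{C^\nu}$ is pure conductor'' therefore assume exactly what they are meant to prove.

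There is a second, related gap: even granting $\mathrm{Diff}_{C^\nu}=\mathfrak{c}_{C^\nu}$, your inference $(K_X+C)|_C\sim_{\bQ}\omega_C$ on $C$ is drawn only from equality of pullbacks to $C^\nu$. That is precisely the descent you say you are sidestepping; the kernel of $\nu^*$ on $\mathrm{Pic}$ is still there. What is actually needed is the further observation that once $\mathrm{Diff}$ equals the conductor, $K_X+C$ is forced to be Cartier along $C$, so that ordinary adjunction on $C$ applies directly---but you have not argued this.

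The paper does not give a self-contained argument here; it simply notes that $C$ is projective over a field and defers to Tanaka's proof in \cite[Theorem 2.13]{tanaka2020abundance}, where these adjunction subtleties (including the classification of lc surface germs with reduced boundary, which forces $C$ to be at worst nodal and pins down the different) are handled. A minor additional point: over a non-closed field a regular genus-zero curve need not be $\bP^1_\ell$; this does not affect the substance since $\mathrm{Pic}^0$ still vanishes, but the claim as written is inaccurate.
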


\begin{proof}
As $C$ is contracted over $T$, it is a curve over some field $k$. 
The same proof of \cite[Theorem 2.13]{tanaka2020abundance} assures that $(K_{X}+C)|_{C} \sim_{\mathbb{Q}} 0$ as claimed. 
\end{proof}

The following theorem should be well known if $T$ is a scheme over $\Spec(\mathbb{Q})$. 
We include a proof as we lack a suitable reference and some arguments do not clearly carry over to the case that $T$ is not of finite type over a field.

\begin{theorem}\label{lc-abund}
    Let $\pi \colon X \to T$ be a projective $R$-morphism of quasi-projective integral normal schemes over $R$.
    Suppose that $(X,B+\Bf{M})$ is generalised lc surface pair over $T$ such that $\dim T >0$, $\Bf{M}$ is b-semiample and $L=K_{X}+B+\Bf{M}_{X}$ nef.
    Then $L$ is semiample.
\end{theorem}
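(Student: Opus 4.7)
The plan is to apply Keel-Witaszek's semiampleness criterion (\autoref{wit-keel}), which reduces the problem to verifying semiampleness of $L$ restricted to the characteristic zero locus $X_\mathbb{Q}$ and to the exceptional locus $\mathbb{E}(L)$. Before invoking the criterion, I would handle the degenerate case where $L$ fails to be relatively big over $T$, since otherwise $\mathbb{E}(L) = X$ and the criterion is tautological. Because $\dim T > 0$, the generic fibre of $\pi$ has dimension at most one, and non-bigness of $L$ then forces $L|_{X_{k(T)}}$ to be numerically trivial. Standard arguments for generalised log canonical divisors of degree zero on proper curves show $L|_{X_{k(T)}} \sim_\mathbb{Q} 0$, and descent via \autoref{p-descent-equidimensional} yields $L \sim_\mathbb{Q} \pi^* L_T$, reducing semiampleness of $L$ to a lower-dimensional analogue on $T$.

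For the characteristic zero part $X_\mathbb{Q}$, I would exploit the b-semiampleness of $\Bf{M}$ via a classical Bertini argument. After passing to a model $X' \to X$ where $\Bf{M}$ descends as a semiample divisor, for some $m$ sufficiently divisible a general element $H \in |m \Bf{M}_{X'}|$ satisfies the property that $(X'_\mathbb{Q}, B'_\mathbb{Q} + \frac{1}{m}H)$ remains lc by Bertini over $\mathbb{Q}$. Thus $L|_{X_\mathbb{Q}}$ is $\mathbb{Q}$-linearly equivalent to the log canonical divisor of an lc surface pair in characteristic zero, and the classical lc abundance theorem for surfaces yields semiampleness.

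The heart of the argument in positive and mixed characteristic is the analysis of $\mathbb{E}(L)$. When $L$ is relatively big, $\mathbb{E}(L)$ is a one-dimensional subscheme whose irreducible components are curves $C \subset X$ with $L \cdot C = 0$, each contracted over $T$ to a closed point. For each such $C$, I would first use the birational geometry employed in the proof of \autoref{t-num-semiampleness} to contract components with $C^2 < 0$ via a $(K_X + B + \Bf{M}_X + \varepsilon C)$-MMP step, reducing to the case of curves with $C^2 = 0$. After passing through a generalised dlt modification so that the remaining curves lie in the reduced part of the boundary, one applies adjunction: the generalised different on $C$ gives $L|_C = K_C + B_C + \Bf{M}_C$ of total degree zero, which combined with the nonnegativity of $\deg B_C$ and $\deg \Bf{M}_C$ forces either $C$ to have arithmetic genus zero (where every degree zero line bundle is trivial) or $B_C = 0$ and $\Bf{M}_C \sim_\mathbb{Q} 0$, in which case \autoref{adj-sec} applied to $(X, C)$ yields $L|_C \sim_\mathbb{Q} 0$.

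The main obstacle I anticipate is compatibility of these reductions with imperfect residue fields at the closed points of $T$, where classification of curves and Bertini-type results typically fail. This mirrors the issue resolved in the proof of \autoref{p-babycase} via normalisation of base changes and direct Riemann-Roch computations, and I expect a similar strategy to succeed here, handling the geometric normalisation of each curve $C$ over the algebraic closure of its residue field before drawing conclusions about the triviality of $L|_C$.
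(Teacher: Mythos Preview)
Your analysis of $\mathbb{E}(L)$ contains a genuine gap. Once $L$ is big (the case you correctly isolate), \autoref{l-C-trivial} forces every irreducible curve $C$ with $L\cdot C=0$ to satisfy $C^{2}<0$, so there is no reduction to the case $C^{2}=0$; the argument you borrow from \autoref{t-num-semiampleness} lives in the non-big setting $\nu(L)=1$ and does not transplant here. Moreover, your proposed $(K_X+B+\Bf{M}_X+\varepsilon C)$-MMP step requires $(X,B+\varepsilon C)$ to be at least lc, which fails exactly when $\coeff_{C}(B)=1$. The paper's proof shows these coefficient-$1$ curves are precisely the ones that cannot be contracted and must be dealt with by adjunction: after a dlt modification one contracts only those $L$-trivial curves with $(K_X+C)\cdot C<0$, which keeps $X$ klt and $\bQ$-factorial. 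The surviving $L$-trivial curves $C_i$ then all have coefficient $1$ in $B$, are pairwise disjoint and disjoint from the rest of $B$, and satisfy $\Bf{M}_X\cdot C_i=0$. At this point b-semiampleness is used, but not via Bertini: one picks a section of $\Bf{M}_Y$ on a higher model avoiding the strict transforms of the $C_i$ and pushes it down to see $\Bf{M}_X|_{C_i}\sim_{\bQ}0$, whence $L|_{C_i}\sim_{\bQ}(K_X+C_i)|_{C_i}\sim_{\bQ}0$ by \autoref{adj-sec}. No case split on the genus of $C_i$ and no passage to the algebraic closure is needed, so the difficulty with imperfect residue fields you anticipate does not arise.

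Your treatment of the characteristic-zero locus also diverges from the paper and has a reference problem. You apply \autoref{wit-keel} globally and handle $X_{\bQ}$ by Bertini plus an appeal to lc abundance for surfaces over a positive-dimensional characteristic-zero base; the paper explicitly notes that this relative statement lacks a citable reference, which is why they prove it. They instead localise on $T$, and when the closed point has residue characteristic zero they bypass \autoref{wit-keel} altogether: after the reductions above they show $kL-\sum C_i$ is ample for $k\gg 0$ and use Kawamata--Viehweg vanishing on the klt surface $X$ (with $(X,B)$ lc) to lift the nowhere-vanishing section of $L|_{\sum C_i}$ to $X$, proving $\SB(L)=\emptyset$ directly.
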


\begin{proof}
By taking Stein factorisation we may assume $\pi$ is a contraction. If $\dim(T)=1$ and $L|_{X_{k(Z)}} \sim_{\mathbb{Q}} 0$, then we conclude by \autoref{p-descent-equidimensional}. 

We may assume that $L$ is big. After taking a dlt modification we may suppose that $(X,B)$ is a $\mathbb{Q}$-factorial dlt pair. 
Let $C$ be an integral curve on $X$ such that $L \cdot C=0$.
As $L$ is big, we deduce that $C^2<0$.
We write $B=B_{C}+t_{C}C$ for some $t_{C} \in [0,1]$ such that the support $B_C$ does not contain $C$. 
As $C^2 <0$, we deduce $$(K_{X}+C) \cdot C \leq (K_X+t_{C}C) \cdot C \leq (K_{X}+B) \cdot C \leq L \cdot C=0$$ since $\Bf{M}_{X}$ is nef.
Moreover this is a chain of equalities if and only if $t_{C}=1$ and $B_{C} \cdot C=\Bf{M}_{X} \cdot C=0$. 

If $(K_X+C) \cdot C <0$, by \cite[Theorem 4.4]{Tan18b} there exists a contraction $X \to Y$ such that its exceptional locus is $C$ and $Y$ is $\mathbb{Q}$-factorial. In this way we contract all the  curves satisfying  $(K_X+C) \cdot C <0$ and $C^2<0$. 
As this contracts only curves $C$ with $(K_{X}+B) \cdot C \leq 0$ so the pair $(X,B)$ remains lc. 
Moreover if $(K_{X}+B) \cdot C \leq 0$ then either $t_{C}=1$ and $(X,C)$ is lc or $t_{C}<1$ and $(X,t_{C}C)$ is klt, so $X$ remains klt and $\mathbb{Q}$-factorial by \autoref{l-Q-fact}. 

We may suppose that every curve $C_i$ for which $L\cdot C_i=0$ satisfies $C_i^2<0$ as $L$ is assumed to be big and the following equalities hold: 
$$(K_X+C_i) \cdot C_i=\Bf{M}_{X}\cdot C_{i}=B_{C_i} \cdot C_{i}=0$$
We  can write $B=\Delta+\sum_i C_{i}$ where $C_{i}$ are the curves with $L \cdot C_{i}=0$.
As observed above, $\text{Supp}(\Delta)$ is disjoint from $C:=\sum C_i$ and $C_i \cap C_j = 0$ if $i \neq j$.
Recall by construction that we have $\mathbb{E}[L] = C$ and $\Bf{M}_{X} \cdot C=0$. 
Since $\Bf{M}_X$ is b-semiample, there is a model $\pi \colon Y \to X$ with $\Bf{M}_{Y}$ semiample on $Y$ and $\pi_{*}\Bf{M}_{Y}=\Bf{M}_{X}$.

Let $D \sim_{\mathbb{Q}} \Bf{M}_{Y}$ be an effective section which does not contain any irreducible component of the strict transform of $C$. 
Then $\pi_*D \sim_\mathbb{Q} \Bf{M}_{X}$ does not contain any irreducible component of $C$. 
As  $\Bf{M}_X \cdot C =0$ we deduce $\Bf{M}_{X}|_C \sim_{\mathbb{Q}} 0$ and in particular $L|_{C}\sim_{\mathbb{Q}}(K_{X}+C)|_{C}$.
We then have $L|_{C} \sim_{\mathbb{Q}} 0$ by \autoref{adj-sec}.

The final argument differs depending on the characteristic. We may assume $T$ is local, and suppose first that the closed point has characteristic $p> 0$.
For dimension reasons, $L|_{X_{K(Z)}}$ is ample, therefore 
by \autoref{wit-keel} $L$ is semiample if and only if $L|_{\mathbb{E}[L]}$ is so. 
As $\mathbb{E}[L]$ is the disjoint union of curves $C_{i}$ with $L \cdot C_{i}=0$ and $L|_{C_{i}} \sim_{\mathbb{Q}} 0$ for every $i$ from above, we conclude.

Now suppose the closed point has residue field of characteristic $0$ instead. Take $k \in \mathbb{N}$ such that $L_{k}=kL-C$ is big, and in particular effective. Then $L_{k}$ intersects only finitely many curves negatively, call them $\gamma_{j}\subseteq \SB(L_{k})$. By construction $C \cdot C_{i} <0$ for each $i$, since $C$ is a disjoint union of irreducible components and every component has negative self-intersection. Hence $L \cdot \gamma_{j} > 0$ for each $j$, so increasing $k$ we may assume $L_{k}$ is strictly nef. Since it is big, $L_{k}$ is ample. In particular the stable base locus has $\SB(L) \subseteq C$. 

Now we have a short exact sequence
\[0 \to \ox((k+1)L-C) \to \ox((k+1)L) \to \ox[C]((k+1)L|_{C}) \to 0\]
where $(k+1)L-C=K_{X}+B+\textbf{M}_X+L_{k}$. Then $H^{i}(X, \mathcal{O}_X((k+1)L-C))=0$ by Kawamata Viewhweg vanishing \cite[Theorem 3.3]{Tan18}, since $L_{k}$ is ample, $(X,B)$ is lc and $X$ is klt. 
As $L|_C \sim_{\bQ} 0$,  we have $(k+1)L|_{C} \sim 0$ for $(k+1)$ sufficiently large and divisible. 
As $H^0(X, \mathcal{O}_X((k+1)L)) \to H^0(C, \mathcal{O}_C((k+1)L))$ is surjective, we conclude $\textbf{SB}(L)$ is empty, concluding.
\end{proof}


We prove a variant of the semiampleness conjecture for lc pairs.

\begin{corollary}\label{SQC-surf}
    Let $\pi \colon X \to T$ be a projective $R$-morphism of quasi-projective integral normal schemes over $R$.	
	Let $(X,B+\Bf{M})$ be a generalised lc surface over $T$ with $\Bf{M}$ b-semiample. 
	If $K_{X}+B+\Bf{M}_X$ is nef over $T$, then it is semiample over $T$. 
\end{corollary}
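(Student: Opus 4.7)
The plan is to reduce to \autoref{lc-abund} whenever $\dim T \geq 1$, so that the only new content is the case $T = \Spec(k)$ for a field $k$. After Stein factorisation, a dlt modification of the generalised pair $(X, B+\Bf{M})$, and a further birational modification on which $\Bf{M}$ descends, one reduces to the setting where $(X, B)$ is a $\bQ$-factorial dlt surface pair over $k$ and $\Bf{M}_X$ is itself a semiample $\bQ$-Cartier $\bQ$-divisor on $X$; semiampleness of $L$ on this modification is equivalent to semiampleness on the original pair.

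I would then consider the contraction $g \colon X \to Z$ associated to the semiample divisor $\Bf{M}_X$, writing $\Bf{M}_X \sim_\bQ g^*A$ for some ample $\bQ$-Cartier $\bQ$-divisor $A$ on $Z$, and split the argument according to $\dim Z$. If $\dim Z = 0$, then $\Bf{M}_X \sim_\bQ 0$, the divisor $L \sim_\bQ K_X + B$ is nef, and abundance for lc surfaces over a field \cite{tanaka2020abundance} concludes. If $\dim Z = 2$, then $g$ is birational and $\Bf{M}_X$ is big, so $L - (K_X + B) = \Bf{M}_X$ is big and nef, and the base-point-free theorem \cite[Theorem 4.2]{Tan18} yields semiampleness of $L$. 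In the intermediate case $\dim Z = 1$, I would apply \autoref{lc-abund} to the morphism $g$ itself: since $\dim Z > 0$, the generalised lc structure, the b-semiampleness of $\Bf{M}$, and the nefness of $L$ all persist in the relative setting over $Z$, and the theorem delivers semiampleness of $L$ over $Z$, which in turn gives semiampleness over $\Spec(k)$.

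The main technical difficulty lies in the $\dim Z = 1$ case, where one must confirm that semiampleness relative to $Z$ really translates to absolute semiampleness over $\Spec(k)$. A more hands-on alternative, bypassing this abstraction, is to mimic the proof of \autoref{lc-abund} directly: on a general fibre $F$ of $g$, the adjunction \autoref{adj-sec} together with a genus analysis (the only possibilities consistent with $(K_X+B) \cdot F = 0$ are $F \cong \bP^1$ with $\deg B|_F = 2$, or $F$ an elliptic curve with $B|_F = 0$) produces $L|_F \sim_\bQ 0$; \autoref{p-descent-equidimensional} then yields $L \sim_\bQ g^*N$ for a nef $\bQ$-divisor $N$ on the curve $Z$. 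If $\deg N > 0$ the divisor $N$ is ample and $L$ is semiample, while the residual sub-case $\deg N = 0$ (equivalently $\nu(L) = 0$) is to be handled via Tanaka's abundance for $K_X+B$ combined with the fact that a semiample numerically trivial divisor is $\bQ$-linearly trivial.
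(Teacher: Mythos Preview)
Your reduction to $\dim T = 0$ via \autoref{lc-abund} matches the paper, but your treatment of the field case has genuine gaps. In the $\dim Z = 2$ branch you invoke \cite[Theorem 4.2]{Tan18}, but that base-point-free theorem is stated for klt pairs; extending it to lc pairs is precisely what the elaborate argument in the proof of \autoref{lc-abund} (in the big case) supplies, and you cannot simply cite that result here since it assumes $\dim T > 0$. In the $\dim Z = 1$ branch your hands-on alternative assumes $(K_X+B)\cdot F = 0$ for a general fibre $F$ of $g$, but nothing forces this: on $X = C \times \bP^1$ with $g(C) \geq 2$, $B = 0$ and $\Bf{M}_X = 2\, p_2^*(\mathrm{pt})$, the map $g$ is $p_2$, a general fibre $F \cong C$ satisfies $L \cdot F = 2g(C)-2 > 0$, and $L$ does not descend along $g$ at all (it is semiample, but via $p_1$). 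Your residual case $\deg N = 0$ is likewise incomplete, since there $K_X+B \equiv -\Bf{M}_X$ is not pseudo-effective, so abundance for $K_X+B$ is unavailable, and a degree-zero divisor on a curve need not be torsion.

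The paper's route over a field is entirely different and sidesteps these issues: it uses Bertini-type theorems to replace $\Bf{M}$ by an honest boundary divisor. In characteristic $0$ one combines the classical Bertini theorem with \cite[Theorem 6.1]{Fuj12}. In characteristic $p>0$ one first arranges for $k$ to be infinite and $F$-finite (base-changing from a finite field to $\overline{k}$, or spreading out from a non-$F$-finite field to an $F$-finite subfield), and then Tanaka's Bertini theorem for semiample divisors \cite[Theorem 1]{tanaka2017semiample} produces an effective $D \sim_\bQ \Bf{M}_X$ with $(X, B+D)$ lc, so that $L \sim_\bQ K_X + B + D$ is semiample by ordinary lc abundance \cite{tanaka2020abundance}.
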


\begin{proof}
We can suppose $\pi$ to be surjective.
Otherwise, if $\dim T >0$ then this is \autoref{lc-abund}. 
Suppose then that $\dim T=0$, so it is the spectrum of a field $k$. 
If the characteristic of $k$ is 0, then we conclude by Bertini theorem and \cite[Theorem 6.1]{Fuj12}.
If $k$ is finite, we consider the base change to $\overline{k}$. Then $(X_{\overline{k}}, B_{\overline{k}}+M_{\overline{k}})$ is a generalised lc pair and, since $k \to \overline{k}$ is faithfully flat it is sufficient to prove $L_{\overline{k}}$ is semiample.
Suppose now that $k$ is an infinite $F$-finite field of characteristic $p>0$.
Then there exists an effective $\mathbb{Q}$-divisor such that $D \sim_{\mathbb{Q}} \Bf{M}$ and $(X,B+D)$ is lc by \cite[Theorem 1]{tanaka2017semiample}. 
Hence $L \sim_{\mathbb{Q}} K_{X}+B+D$ is semiample by \cite[Theorem 1]{tanaka2020abundance}.
		
If $k$ is not $F$-finite, then by standard arguments (cf. \cite{DW19}) there is an $F$-finite sub-field $l\subseteq k$ and a generalised klt pair $(X_{l},B_{l}+\Bf{M}_{X_{l}})$ such that
\begin{itemize}
	\item $(X,B+M)=(X_{\times_{\Spec(l)}} \Spec(k),B_{l}\times_{\Spec(l)} \Spec(k)+\Bf{M}_{X_{l}} \times_{\Spec(l)} \Spec(k))$; and
		\item $\Bf{M}_{X_{l}}$ is b-semiample.
	\end{itemize}
As $K_{X_l}+B_l+\Bf{M}_{X_{l}}$ is semiample, so is $K_{X}+B+\Bf{M}_{X}$.
\end{proof}

\section{Semiampleness for CY threefolds} \label{s-3folds}

In this section we show the semiampleness conjecture for threefolds in mixed characteristic (or over positive dimensional bases of positive characteristic).
We fix $R$ to be an excellent DVR with maximal ideal $\mathfrak{m}$, residue field $k:=R/\mathfrak{m}$ of characteristic $p>0$ and fraction field $K$.
We recall the following well-known fact on divisor class groups of projective morphisms over DVR.

\begin{lemma}\label{l-Picard-DVR} 
    Let $\pi \colon X \to \Spec(R)$ be a projective contraction of integral normal schemes.
    We write $(X_{k})_{\text{red}} = \sum_i D_i$ as a Weil divisor. 
    Then the following sequence is exact:
    $$\bigoplus_i \mathbb{Q}[D_i] \xrightarrow{j} \Cl(X)_{\mathbb{Q}} \to \Cl(X_{K})_{\mathbb{Q}} \to 0.$$
    Define $\Pic_{X_{k}}(X)_\mathbb{Q}:=j\left(\bigoplus_i \mathbb{Q}[D_i] \cap \Pic(X)_{\mathbb{Q}}\right)$  and denote by $N^{1}_{X_{k}}(X)_{\mathbb{Q}}$ its quotient by the numerical equivalence relation. Then the following sequences are exact: $$\Pic_{X_{k}}(X)_\mathbb{Q}\to \Pic(X/R)_{\mathbb{Q}} \to \Pic(X_K)_{\mathbb{Q}}, $$
    $$ N^{1}_{X_{k}}(X)_{\mathbb{Q}}\to \NS(X/R)_{\mathbb{Q}} \to \NS(X_{K})_{\mathbb{Q}}. $$
    If $X$ is $\mathbb{Q}$-factorial, then the second and third sequences are also surjective on the right.
\end{lemma}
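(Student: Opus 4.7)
The plan is to derive all three sequences from excision for the Weil divisor class group of a normal scheme. Since $X$ is normal and integral and $X_K = X \setminus X_k$ with $(X_k)_{\mathrm{red}} = \sum_i D_i$, the standard excision sequence gives
$$\bigoplus_i \mathbb{Z}[D_i] \xrightarrow{j} \Cl(X) \to \Cl(X_K) \to 0,$$
where the right map is restriction of Weil divisor classes to $X_K$. Surjectivity follows because every prime divisor on the open $X_K$ extends to its closure in $X$, and kernel modulo image is trivial because a Weil divisor class on $X$ vanishing on $X_K$ is represented by a divisor supported on $X_k$, hence a $\mathbb{Z}$-linear combination of the $D_i$. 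Tensoring with $\mathbb{Q}$ preserves exactness and yields the first asserted sequence.

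Next I would deduce the Picard sequence. Since $R$ is a local ring, $\Pic(R)=0$ and so $\Pic(X/R) = \Pic(X)$. The restriction $\Pic(X)_{\mathbb{Q}} \to \Pic(X_K)_{\mathbb{Q}}$ is induced by $\Cl(X)_{\mathbb{Q}} \to \Cl(X_K)_{\mathbb{Q}}$; its kernel consists of $\mathbb{Q}$-Cartier classes whose underlying Weil divisor is supported on $X_k$, which, by the first sequence together with the very definition of $\Pic_{X_k}(X)_{\mathbb{Q}}$ as $j\bigl(\bigoplus_i \mathbb{Q}[D_i] \cap \Pic(X)_{\mathbb{Q}}\bigr)$, is precisely the image of $\Pic_{X_k}(X)_{\mathbb{Q}}$.

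The NS sequence is then obtained by passing to the quotient by numerical equivalence. One first checks that restriction to $X_K$ is compatible with the relevant notions of numerical equivalence: if $L \equiv 0$ on $X$, then for any curve $C$ in $X_K$ proper over $\Spec K$, viewing $C$ as a closed subscheme of $X$ gives $L|_{X_K} \cdot C = L \cdot C = 0$, so we obtain a well-defined map $\NS(X/R)_{\mathbb{Q}} \to \NS(X_K)_{\mathbb{Q}}$. Exactness at the middle then follows by a short diagram chase from exactness of the Pic sequence together with the surjectivity of $\Pic(\cdot)_{\mathbb{Q}} \to \NS(\cdot)_{\mathbb{Q}}$ applied to both $X$ and $X_K$. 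Finally, if $X$ is $\mathbb{Q}$-factorial then $\Cl(X)_{\mathbb{Q}} = \Pic(X)_{\mathbb{Q}}$, so the surjectivity in the first sequence transfers immediately to the Pic sequence, and from there to the NS sequence by taking quotients.

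The only conceptual subtlety is fixing the correct notion of relative numerical equivalence and verifying that restriction $X \leadsto X_K$ is compatible with it; the rest is essentially formal bookkeeping from the excision sequence, and I would not expect any substantive obstacle.
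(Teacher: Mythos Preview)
Your overall structure matches the paper's: excision for $\Cl$, restrict to $\Pic$, pass to $\NS$, and use $\mathbb{Q}$-factoriality for right surjectivity. The gap is in the $\NS$ step, which you treat as formal but is where the actual content lies.

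First, a smaller point: your well-definedness argument for $\NS(X/R)_{\mathbb{Q}} \to \NS(X_K)_{\mathbb{Q}}$ is not right as written. With the paper's convention, a ``curve over $R$'' is proper over the \emph{closed} point of $\Spec R$, so a curve $C \subset X_K$ proper over $K$ is not among those that witness $L \equiv 0$ over $R$, and ``$L \cdot C = 0$'' does not follow by just viewing $C$ inside $X$. One has to take the closure $\overline{C} \subset X$, which is flat over $R$, and use constancy of intersection numbers in flat families to compare $L \cdot C$ with $L \cdot \overline{C}_k$.

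The more serious issue is the claimed diagram chase for exactness at $\NS(X/R)_{\mathbb{Q}}$. Take $[L]$ in the kernel. Lifting to $L \in \Pic(X)_{\mathbb{Q}}$, you only know that $L|_{X_K}$ is \emph{numerically} trivial, not that it vanishes in $\Pic(X_K)_{\mathbb{Q}}$, so the exactness of the $\Pic$ row does not apply directly. Completing the chase requires producing $L' \in \Pic(X)_{\mathbb{Q}}$ with $L' \equiv 0$ over $R$ and $L'|_{X_K} = L|_{X_K}$; equivalently, one needs $\Pic^{\tau}(X/R)_{\mathbb{Q}} \to \Pic^{\tau}(X_K)_{\mathbb{Q}}$ to hit $L|_{X_K}$. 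This does \emph{not} follow from exactness of the $\Pic$ sequence together with surjectivity of $\Pic \twoheadrightarrow \NS$ on each space. The paper closes this by invoking \autoref{p-descent-over-curve}: the statement that (for a nef divisor) numerical triviality on the generic fibre forces $L \equiv \pi^*D$ over $R$ is exactly the geometric input that replaces your missing surjectivity. So the step you flag as ``the only conceptual subtlety'' is in fact two subtleties, and the second one---descent of numerical triviality from the generic fibre---is the substance of the argument, not bookkeeping.
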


\begin{proof}
    The first sequence is obtained from \cite[Proposition 6.5]{Ha77} by tensoring with $\mathbb{Q}$. 
    The second exact sequence follows immediately from the first one by considering the natural injections $\Pic(X/R) \to \text{Cl}(X)$ and $\Pic(X_{K}) \to \text{Cl}(X_{K})$. 
    The third exact sequence follows from the second and the fact that $L \equiv L'$ over $R$ if and only if $L|_{X_K} \equiv L'|_{X_K}$ by \autoref{p-descent-over-curve}.
    If $X$ is $\mathbb{Q}$-factorial then we have a natural isomorphism $\Pic(X_{K}) \simeq \text{Cl}(X)_{\mathbb{Q}}$, so the second sequence is surjective on the right. 
    Then the third sequence must also be surjective on the right as $\NS(X/R)_{\mathbb{Q}}$ is a quotient of $\Pic(X/R)_{\mathbb{Q}}$.
\end{proof}
    
We now prove the semiampleness conjecture for CY
threefolds over a DVR of residue characteristic $p>5$. Using the semiampleness conjecture for surfaces, we show a numerical non-vanishing on the threefold fibration. This allows to conclude by the abundance theorem for klt threefolds \cite{BBS}.

\begin{theorem}\label{t-CY-3folds}
    Let $R$ be an excellent DVR with residue characteristic $p>5$ and let $\pi \colon (X,B) \to \Spec(R)$ a projective klt CY pair of dimension 3. 
    If $\pi$ is surjective and $L$ is a nef $\mathbb{Q}$-Cartier $\mathbb{Q}$-divisor, then $L$ is num-semiample.
\end{theorem}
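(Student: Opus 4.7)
The plan is to lift the semiampleness from the generic fibre $X_K$ and then apply \autoref{l-abund-effective}. Since $(X_K, B_K)$ is a klt Calabi--Yau surface pair over $K$ and $L|_{X_K}$ is nef, by \autoref{t-num-semiampleness} there exists a $\mathbb{Q}$-Cartier $\mathbb{Q}$-divisor $L'_K$ on $X_K$ with $L'_K \equiv L|_{X_K}$ and $L'_K$ semiample. I would then construct a $\mathbb{Q}$-Cartier $\mathbb{Q}$-divisor $L'$ on $X$ with $L' \equiv L$ over $R$ whose restriction to $X_K$ is semiample, and use flatness of $\pi$ to deduce $\mathbb{Q}$-effectivity, concluding via \autoref{l-abund-effective} in the case $\dim X = 3$, $\dim\pi(X)\geq 1$, residue characteristic $p>5$.

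To construct such $L'$, first reduce to $X$ being $\mathbb{Q}$-factorial. Taking a small $\mathbb{Q}$-factorialization $\phi\colon X' \to X$, available by the MMP machinery of \cite{bhatt2020} since $p>5$, the pair $(X', \phi^{-1}_{*}B)$ remains a klt Calabi--Yau threefold and $\phi^{*}L$ is nef. Once the theorem is proved for $(X', \phi^{-1}_{*}B)$ with $\phi^{*}L$, say giving a contraction $f\colon X' \to Z$ with $\phi^{*}L \equiv f^{*}A$ for some ample $A$, the projection formula forces exceptional curves of $\phi$ to have $\phi^{*}L$-degree zero and hence to be contracted by $f$; rigidity then shows $f$ factors as $f=f'\circ\phi$ for some $f'\colon X \to Z$, giving $L \equiv (f')^{*}A$ on $X$.

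With $X$ $\mathbb{Q}$-factorial, \autoref{l-Picard-DVR} shows that $\Pic(X/R)_{\mathbb{Q}} \to \Pic(X_K)_{\mathbb{Q}}$ is surjective, so $[L'_K]$ lifts to some $[\tilde L] \in \Pic(X/R)_{\mathbb{Q}}$ with $\tilde L|_{X_K} \sim_{\mathbb{Q}} L'_K$. The class $[L-\tilde L] \in \NS(X/R)_{\mathbb{Q}}$ restricts to zero in $\NS(X_K)_{\mathbb{Q}}$, so the third exact sequence of \autoref{l-Picard-DVR} yields $L - \tilde L \equiv E$ over $R$ for some $\mathbb{Q}$-divisor $E$ supported on the special fibre $X_k$. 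Setting $L' := \tilde L + E$ we get $L' \equiv L$ over $R$, while $E|_{X_K}=0$ ensures $L'|_{X_K} \sim_{\mathbb{Q}} L'_K$ is semiample.

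To finish, pick $m>0$ such that $mL'$ is Cartier and $mL'|_{X_K}$ is base-point-free; in particular $H^0(X_K, \mathcal{O}_{X_K}(mL'|_{X_K})) \neq 0$. Since $\pi$ is flat (as $X$ is integral and dominates the DVR $\Spec R$) and $\mathcal{O}_X(mL')$ is $R$-flat, flat base change yields
\[
H^0(X, \mathcal{O}_X(mL')) \otimes_R K \simeq H^0(X_K, \mathcal{O}_{X_K}(mL'|_{X_K})) \neq 0,
\]
so $L'$ is $\mathbb{Q}$-effective on $X$. Then \autoref{l-abund-effective} gives that $L'$ is semiample over $R$, and $L \equiv L'$ implies $L$ is num-semiample over $R$. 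I expect the main obstacle to be the lifting step: producing an $L'$ that is simultaneously numerically equivalent to $L$ over $R$ and restricts to a semiample divisor on $X_K$. This relies crucially on the Picard and N\'eron--Severi descent provided by \autoref{l-Picard-DVR}, which in turn is the reason the $\mathbb{Q}$-factorial reduction is needed.
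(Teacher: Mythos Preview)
Your argument is correct, and it follows a cleaner route than the paper's. Both proofs reduce to the $\mathbb{Q}$-factorial case, invoke \autoref{t-num-semiampleness} on the generic fibre, use the Picard/N\'eron--Severi descent of \autoref{l-Picard-DVR}, and finish via \autoref{l-abund-effective}. The paper, however, splits into cases according to $\nu(L|_{X_K})$: the cases $\nu=0$ and $\nu=2$ are dispatched by \autoref{p-descent-over-curve} and the base-point-free theorem respectively, and only for $\nu=1$ does the paper do real work, spreading out the semiample fibration $\varphi\colon X_K\to W$ via a Nagata compactification of $W$ over $\Spec R$, a resolution $Z\to\overline{W}$, and a resolution of indeterminacy $Y\to X$, then lifting the ample class on $W$ to a big class on $Z$ and pulling back to produce an effective divisor numerically equivalent to $L$.

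Your approach sidesteps all of this: you lift the num-semiample class $L'_K$ directly to $X$ using the surjectivity in \autoref{l-Picard-DVR}, correct by a vertical divisor to match $L$ numerically, and then observe via flat base change that sections of $mL'|_{X_K}$ already force sections of $mL'$ on $X$. This handles all three numerical dimensions uniformly and avoids the compactification and resolution-of-indeterminacy machinery entirely. The paper's route is more geometric and makes the fibration structure explicit, which could be useful if one wanted finer control over the resulting contraction; your route is shorter and shows that the only essential input beyond the surface case is the elementary descent statement \autoref{l-Picard-DVR}.
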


\begin{proof}
By taking a $\mathbb{Q}$-factorialisation \cite{BBS}, we can suppose $X$ to be $\mathbb{Q}$-factorial. 
If $\nu(L|_{X_K})=2$, then we conclude by the base-point-free theorem 
\cite[Theorem 9.15]{bhatt2020}. 
If $\nu(L|_{X_K})=0$, then $L|_{X_K} \equiv 0$ and  by \autoref{p-descent-over-curve} there exists a $\mathbb{Q}$-Cartier $\bQ$-divisor $D$ on $\Spec(R)$ such that $L \equiv \pi^*D$, concluding.

Suppose $\nu(L|_{X_K})=1$. Thanks to \autoref{l-abund-effective} it is sufficient to show that there exists an effective $\mathbb{Q}$-divisor $E$ such $L \equiv_{\pi} E$ .
By \autoref{t-num-semiampleness}, there exists $\varphi \colon X_K \to W$ such that $L_K \equiv \varphi^*A$ for some $A$ ample $\mathbb{Q}$-Cartier $\bQ$-divisor on $W$.
Take a Nagata compactification $\overline{W}$ of $W$ over $\Spec(R)$ (\cite[\href{https://stacks.math\cdot Columbia.edu/tag/0F41}{Tag 0F41}]{stacks-project}) and let $g \colon Z \to \overline{W}$ be a projective resolution of singularities \cite[Proposition 2.12]{bhatt2020}.
Resolving the indeterminacy of $t \colon X \dashrightarrow Z$ and applying again \cite[Proposition 2.12]{bhatt2020} there exist projective birational morphisms $\pi \colon Y \to X$ and $\psi \colon Y \to Z$ of normal projective $R$-schemes.
	
Write $(Y_k)_{\text{red}}=\sum_i E_i$ and $(Z_k)_{\text{red}}=\sum_j D_j$ as Weil divisors. 
By \autoref{l-Picard-DVR} we have the following commutative diagram of exact sequences:
	\[
	\xymatrix{
	\bigoplus \mathbb{Q}[D_j]	\ar[d]^{\psi^*} \ar[r] & \NS(Z/R)_{\mathbb{Q}} \ar[r] \ar[d]^{\psi^*} \ar[r] & \NS(Z_K)_{\mathbb{Q}} \ar[r] \ar[d]^{\psi_K^*} & 0 \\
     \NS^{1}_{X_{k}}(X)_{\mathbb{Q}} \ar[r] & \NS(Y/R)_{\mathbb{Q}} \ar[r] & \NS(Y_K)_{\mathbb{Q}}, &
	}
	\]
where the surjectivity of the top row comes from $Z$ being regular.
If we write $N:=\pi^*L,$ we know that $N_K = \pi|_K^*\varphi^* A $. 
Let $H := g_K^* \varphi^* A$ be the pull-back of $A$ via the morphism $Z_K \to W$.
By the above sequence there exists a $\mathbb{Q}$-Cartier $\mathbb{Q}$-divisor $D$ on $Z$ such that $D|_{Z_K} \sim_{\mathbb{Q}} H$. Note in particular, $D$ is big over $R$.

By the exact sequence we deduce that on $Z$ we have $N-\psi^*D \equiv \sum b_i E_i$ for certain $b_i \in \mathbb{Z}$.
By adding a sufficiently high multiple of the central fibre $X_k$, we see that $N \equiv \psi^*D + F$, where $F$ is effective.
As $D$ is big over $R$, we conclude that $N \equiv E$ for some $E$ effective. 
As $N=\pi^*L$, we conclude $L \equiv \pi_*E$. 
\end{proof}

\bibliographystyle{amsalpha}
\bibliography{refs}
	
\end{document}